\newtheorem{theorem}{Theorem}[section]
\newtheorem{example}[theorem]{Example}
\newtheorem{lemma}[theorem]{Lemma}
\newtheorem{proposition}[theorem]{Proposition}
\newtheorem{remark}[theorem]{Remark}
\newtheorem{problem}{Problem}
\def\Ga{{\Gamma}}
\def\de{\delta}
\def\al{\alpha}
\def\be{\beta}
\def\ga{\gamma}
\newcommand{\GL}{\mathop{\mathrm{GL}}}
\newcommand{\F}{\mathbb{F}}
\def\Sym{{\rm Sym}\,}
\def\Sz{{\rm Sz}\,}
\def\Ree{{\rm Ree}\,}
\def\SL{{\rm SL}\,} 
\def\PGaL{{\rm P\Gamma L}\,} 
\def\Sp{{\rm Sp}\,} 
\def\PSL{{\rm PSL}}
\def\PGL{{\rm PGL}}
\def\AG{{\rm AG}}
\def\AGL{{\rm AGL}}
\def\GaL{{\rm \Gamma L}}
\def\PG{{\rm PG}}
\def\PSU{{\rm PSU}\,}
\def\AGaL{{\rm A\Gamma L}}
\def\PGaU{{\rm P\Gamma U}}
\def\SU{{\rm SU}}
\def\GU{{\rm GU}}
\def\Aut{{\rm Aut}\,}
\def\Out{{\rm Out}\,}
\def\Stab{{\rm Stab}}
\def\ra{\rangle}
\def\la{\langle}
\def\si{\sigma}
\def\lam{\lambda}
\def\calL{\mathcal{L}}
\def\Xv{\binom{\V}{k}}
\def\ov{\overline}
\def\V{\mathcal{V}}
\def\U{\mathcal{U}}
\def\e{\mathsf{e}}
\def\v{\mathsf{v}}
\def\w{\mathsf{w}}
\def\u{\mathsf{u}}
\def\t{\mathsf{t}}
\def\x{\mathsf{x}}
\def\y{\mathsf{y}}
\title{Neighbour-transitive codes in Johnson graphs} 
\author{Robert A. Liebler and Cheryl E. Praeger}
\date{December 2012}
\begin{document}
\begin{abstract}
The Johnson graph $J(v,k)$ has, as vertices, the $k$-sub\-sets of a $v$-set $\V$
and as edges the pairs of $k$-subsets with intersection of size $k-1$.
We introduce the notion of a neighbour-transitive code in $J(v,k)$.
This is a vertex subset $\Gamma$
such that the subgroup $G$ of graph automorphisms leaving $\Gamma$ 
invariant is transitive on  both the set $\Gamma$ of `codewords'
and also the set of `neighbours' of $\Gamma$, which are the non-codewords
joined by an edge to some codeword. We classify all examples 
where the group $G$ is a subgroup of the symmetric group $\Sym(\V)$ 
and is intransitive or imprimitive on the 
underlying $v$-set $\V$. In the remaining case where $G\leq\Sym(\V)$
and $G$ is primitive on $\V$, we prove that, provided distinct codewords
are at distance at least $3$, then $G$ is $2$-transitive on $\V$. 
We examine many of the infinite families of finite  $2$-transitive 
permutation groups and construct surprisingly rich families of examples
of neighbour-transitive codes. A major unresolved case remains.

\medskip\noindent
\emph{Key-words:}\quad codes in graphs, Johnson graph, $2$-transitive permutation group,
neighbour-transitive.

\medskip\noindent
\emph{Mathematics Subject Classification (2010):}\quad 05C25, 20B25, 94B60.  
\end{abstract}

\maketitle

\section{Introduction}\label{intro}

In 1973, Philippe Delsarte \cite{Delsarte} introduced the notion of a code 
in a distance regular graph, namely a vertex subset whose elements are the 
codewords and with distance between codewords being the natural distance 
in the graph. In particular he defined a special class of such codes, now 
called completely regular codes, `which enjoy combinatorial (and often 
algebraic) symmetry akin to that observed for perfect codes'\ \cite[page 1]{Martin04}. (Completely regular codes are defined in Subsection~\ref{creg}.)
Disappointingly, not many completely regular codes with good error-correcting properties 
(large distance between distinct codewords) were found and, for such codes in binary Hamming graphs having at 
least three codewords, it has been conjectured that the minimum distance between 
distinct codewords is at most 8 (see \cite[page 2]{BRZ}). In fact
Neumaier~\cite{Neumaier} conjectured that the only completely regular 
code with minimum distance 8 in a binary Hamming graph is the extended binary Golay 
code. Even though Neumaier's conjecture was disproved by 
Borges, Rifa, and Zinoviev~\cite{BRZ2}, there are very few codes known with
these properties.

Delsarte's paper \cite{Delsarte} posed explicitly the question of existence 
of completely regular codes in Johnson graphs, and our focus in this paper
is on a related family of codes in these graphs which contains many completely regular examples. 
Completely regular codes in Johnson graphs have been studied by 
Meyerowitz~\cite{Meyerowitz1,Meyerowitz2} and 
Martin~\cite{Martin94,Martin98}. We relax the stringent 
regularity conditions imposed for complete regularity, and replace 
them with conditions involving only codewords and their immediate neighbours. 
On the other hand, we strengthen the regularity conditions for codewords and their 
neighbours to a local transitivity property. The codes we study are called 
neighbour-transitive codes. We construct surprisingly rich classes of 
examples arising from both combinatorial and geometric structures,
including some famlies with unbounded minimum distance.

% Our analysis deals with most kinds of neighbour transitive codes,
% with minimum distance at least 3, but leaves an important unresolved 
% case associated with the 2-transitive actions of symplectic groups 
% on quadratic forms. We discuss this further below. 

Some but not all of the examples we construct are completely regular, generalising the 
constructions and results in~\cite{Martin94,Martin98,Meyerowitz1,Meyerowitz2}. 
Other constructions raise new  questions about geometric configurations in 
projective and affine spaces, and spaces of binary quadratic forms. The 
last case,  associated with the 2-transitive actions of symplectic groups 
on binary quadratic forms, gives rise to a significant open problem \emph{(see below)}. Our work 
generalises also the as yet unpublished study in \cite{GP} by Godsil 
and the second author of completely transitive codes in Johnson graphs. 
 
\bigskip\noindent
\textbf{Dedication:}  This work began as a joint project almost a decade ago,
by Bob Liebler and me.  Sadly, in July 2009, Bob Liebler died while hiking in California.
I completed the paper alone and I dedicate it to my friend and colleague Bob Liebler.

\subsection{Johnson graphs and neighbour-transitive codes}
The  \emph{Johnson graph} $J(v,k)$, based on a set $\V$ of $v$ elements called 
\emph{points}, is  the graph whose vertex set is the set $\binom{\V}{k}$ of all 
$k$-subsets of $\V$, with edges being the unordered pairs $\{\ga,\ga'\}$ of $k$-subsets
such that $|\ga\cap\ga'|=k-1$. 
Since $J(v,1)$ and $J(v,v-1)$ are both the complete graph on $\V$, we assume that
$2\leq k\leq v-2$. Moreover, since $J(v,k)\cong J(v,v-k)$, we may sometimes, when convenient,
restrict our analysis to the case  $k\leq v/2$. This is discussed further in 
Subsection~\ref{rem-flag-tra}. 

The graph $J(v,k)$ admits the symmetric group $\Sym(\V)$ as a group of automorphisms, 
and if $k\ne v/2$ this is the full automorphism group. If $k=v/2$ then the complementation map 
$\tau$ that sends each $k$-subset $\gamma$ to its complement $\overline\gamma :=\V\setminus\gamma$
is also an automorphism of $J(v,k)$ and the full automorphism group is
$\Sym(\V)\times\la\tau\ra\cong S_v\times Z_2$. This exceptional case is investigated in
\cite{NP2}, and in this paper we consider subgroups of 
automorphisms contained in $\Sym(\V)$.

The codes we study are proper subsets $\Ga\subset\binom{\V}{k}$. %of $k$-subsets of $\V$.
The automorphism group $\Aut(\Ga)$ of such a code 
$\Ga$ is the setwise stabiliser of $\Ga$ in the symmetric group
$\Sym(\V)\cong S_v$ (or in $\Sym(\V)\times\la\tau\ra$ if $k=v/2$). 
By a \emph{neighbour of $\Ga$} we mean a $k$-subset $\ga_1$ of 
$\V$ that is not a codeword but satisfies $|\ga_1\cap\ga|=k-1$ for some 
codeword $\ga\in\Ga$, that is to say, the distance $d(\ga,\ga_1)$ between
$\ga$ and $\ga_1$ in $J(v,k)$ is 1.
By the 
\emph{minimum distance} $\de(\Ga)$ of a code $\Ga$, we mean the least 
distance in $J(v,k)$ between distinct codewords of $\Ga$. Thus 
provided $\de(\Ga)>1$, all vertices adjacent to a codeword are neighbours.
We say that $\Ga$ is 
\emph{code-transitive} if $\Aut(\Ga)$ is transitive on $\Ga$, and 
\emph{neighbour-transitive} if $\Aut(\Ga)$ is transitive on  both
$\Ga$ and the set $\Ga_1$ of neighbours of $\Ga$. 

The concept of neighbour-transitivity for codes in $J(v,k)$ can be placed in a 
broader context by viewing a code $\Ga$ and its neighbour set $\Ga_1$ as an
incidence structure, with incidence between a codeword and a neighbour 
induced from adjacency in $J(v,k)$. (See 
Section~\ref{sect:prelims} for more details.) This incidence
structure, and also the code $\Gamma$, is
called \emph{$G$-incidence-transitive} if $G\leq \Aut(\Ga)$ and $G$ is transitive on 
codeword-neighbour pairs $(\ga,\ga_1)$ with $\ga\in\Ga,\ga_1\in\Ga_1$ and
$d(\ga,\ga_1)=1$. Each incidence transitive code is neighbour transitive 
(by definition), but if  $\de(\Ga)\leq 2$, it is possible for $\Ga$ to be 
neighbour-transitive but not incidence-transitive (see
Example~\ref{ex:de1b}), or for $\Ga$ to be code-transitive but 
not neighbour-transitive, or for $\Aut(\Ga)$ to be transitive on $\Ga_1$ 
but not transitive on $\Ga$, and hence not neighbour-transitive
(see Examples~\ref{ex1} and~\ref{ex:de1}).

\subsection{Results and questions}\label{rq}
Neighbour transitivity may seem a rath\-er restrictive condition. 
However examples range from the 
collection of all $k$-subsets of a fixed subset $\U\subseteq \V$ (Example~\ref{ex-intrans}),
to the block set of the $5-(12,6,1)$ Witt design associated with 
the Mathieu group ${\rm M}_{12}$ \cite[Table 1]{NP}, to the set of lines of
a finite projective space (Example~\ref{ex-lin1}). 
Moreover the examples include the completely-transitive designs
studied in \cite{GP} where transitivity is required not only on the
code $\Ga$ and its neighbour set $\Ga_1$, but also on each subset $\Ga_i$ of 
the distance partition determined by $\Ga$ (see Subsection~\ref{creg}). 
Many of the constructions from \cite{GP} were mentioned in Bill Martin's
papers \cite{Martin94,Martin98} on completely regular designs.

As a broad summary of the results of this paper, together with those of 
\cite{Dur} and \cite{NP}, for the case of minimum distance at least $3$, 
we can announce that: 

\begin{center}
\emph{if $\Gamma\subset J(v,k)$ with $\delta(\Gamma)\geq3$ such that
$G:=\Aut(\Gamma)\cap\Sym(\V)$ is neighbour-transitive on $\Gamma$, then either $\Gamma$ 
is known explicitly, or $G$ is a symplectic group acting $2$-transitively on a set
$\V$ of quadratic forms.} 
\end{center}

Thus a major open problem remains, work on which is proceeding in 
the PhD project of Mark Ioppolo at the University of Western Australia.
(Some examples are known in this case.) 

\begin{problem}\label{prob1}{ 
   Classify the $G$-neighbour-transitive codes $\Ga\subset J(v,k)$, 
where $G=\Sp(2n,2)$ and $v=2^{2n-1}\pm 2^{n-1}$.
}
\end{problem}

Our first result is a complete classification 
(proved in Sections~\ref{sect:intrans} and 
\ref{sect:imprim}) of the
neighbour-transitive codes in $J(v,k)$ for which the 
automorphism group is intransitive, or transitive and imprimitive, on 
the point set $\V$. A transitive group 
$A$ is imprimitive on $\V$ if it leaves invariant a
non-trivial partition of $\V$.

\begin{theorem}\label{notprim}
If $\Ga\subset\binom{\V}{k}$ is neighbour-transitive, 
%where $2\leq k\leq |\V|/2$, 
and if $\Aut(\Ga)\cap\Sym(\V)$ is intransitive on $\V$, or transitive and 
imprimitive on $\V$, then $\Ga$ is one of the codes in Example~{\rm\ref{ex-intrans}, 
\ref{ex:imprim}}, or {\rm\ref{ex:imprim2}}.
\end{theorem}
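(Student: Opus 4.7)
The plan is to exploit code-transitivity (which is implied by neighbour-transitivity) together with the orbit or block structure forced on $\V$ by $G := \Aut(\Gamma) \cap \Sym(\V)$. Since $G$ preserves the orbit decomposition (respectively the invariant partition) and is transitive on $\Gamma$, every $\gamma \in \Gamma$ shares a common intersection profile with the $G$-orbits (respectively a common block-intersection multiset). The main engine will then be an analysis of the $G$-orbits on neighbour-producing swaps $(x,y)$, where $\gamma_1 = (\gamma \setminus \{x\}) \cup \{y\}$ lies in $\Gamma_1$; the $G$-orbit of such a pair $(\gamma,\gamma_1)$ is controlled both by which part contains $x$ and which contains $y$ and by the resulting profile of $\gamma_1$.

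For the intransitive case, fix the $G$-orbits $\V_1,\ldots,\V_t$ and constants $k_i = |\gamma\cap\V_i|$. A swap with $x\in\V_i$, $y\in\V_j$, $i\neq j$, moves the profile to $(\ldots,k_i-1,\ldots,k_j+1,\ldots)$; different ordered pairs $(i,j)$ produce distinct profiles and hence distinct $G$-orbits of neighbours. Neighbour-transitivity thus allows at most one admissible ordered pair, which combinatorially forces $t=2$ and, without loss of generality, $k_2=0$. A short transitivity argument using $\Sym(\V_1)\times\Sym(\V_2)\le G$ then shows that $\Gamma$ must equal the full set of $k$-subsets of $\V_1$, giving Example~\ref{ex-intrans}.

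For the imprimitive case, let the blocks be $B_1,\ldots,B_t$ of size $b$ and let $\mu=\{m_1,\ldots,m_t\}$ denote the common block-intersection multiset of codewords. An intra-block swap preserves $\mu$ but may exit $\Gamma$, while an inter-block swap strictly alters $\mu$. I split on the shape of $\mu$. If every $m_i\in\{0,b\}$, then each codeword is a union of complete blocks; no intra-block swap produces a neighbour, only one inter-block swap type survives under $G$-action, and neighbour-transitivity then pins $\Gamma$ down as the full collection of unions of a fixed number of blocks, matching Example~\ref{ex:imprim}. If some $m_i$ lies strictly between $0$ and $b$, then intra-block swaps necessarily produce neighbours; matching these $G$-orbits against the inter-block swap orbits forces $\mu$ to be constant (all entries equal to a common $m$ with $0<m<b$), and code-transitivity together with the wreath product $\Sym(B)\wr\Sym_t$ captured inside $\Aut(\Gamma)$ identifies $\Gamma$ with the full set of $k$-subsets picking exactly $m$ elements from each block, yielding Example~\ref{ex:imprim2}.

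The main obstacle will be verifying \emph{saturation} of $\Gamma$ within its profile class in each case, that is, showing that $\Gamma$ is not a proper $G$-invariant subset of all $k$-subsets of the prescribed shape. This requires, at the borderline, pinning down $G$ precisely enough to apply standard transitivity facts about symmetric and wreath product groups acting on subsets, partitions, and transversals. A secondary delicate point is to verify that the two swap regimes (intra-block and inter-block) cannot coexist as a single $G$-orbit of neighbour pairs unless one is absent, which is what ultimately forces the dichotomy between Examples~\ref{ex:imprim} and~\ref{ex:imprim2}.
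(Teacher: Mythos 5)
Your overall strategy (profile/swap analysis driven by the $G$-invariant orbit or block structure) is the same as the paper's, but the execution contains genuine errors in both cases, and the conclusions you reach are incomplete or false. In the intransitive case, the condition that only one ordered pair $(i,j)$ of orbits admits a neighbour-producing swap does force $t=2$, but it does \emph{not} force $k_2=0$: the alternative is $k_1=|\V_1|$, i.e.\ every codeword \emph{contains} the orbit $\V_1$. This is exactly Example~\ref{ex-intrans}(c) (all $k$-subsets containing a fixed $\U$ with $|\U|<k$), which your argument would wrongly exclude. Also, you cannot invoke $\Sym(\V_1)\times\Sym(\V_2)\le G$ to get saturation --- that containment is only known \emph{after} you have identified $\Gamma$; the paper instead gets saturation from a connectivity argument in $J(|\U|,k)$ using only that every $k$-subset of $\U$ adjacent to a codeword has the wrong neighbour profile and so must itself be a codeword.

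The imprimitive case is more seriously off. In your ``union of complete blocks'' branch the conclusion that $\Gamma$ must be \emph{all} unions of a fixed number of blocks is false: Example~\ref{ex:imprim2} allows $\Gamma=\Gamma(a,\Gamma_0)$ for \emph{any} suitable code $\Gamma_0\subseteq\binom{\U}{k_0}$ in the quotient Johnson graph (e.g.\ a blow-up of the line-code of a projective space), and such $\Gamma$ is neighbour-transitive without being saturated in its profile class; so saturation genuinely fails here and your branch misses an entire family of examples in the theorem's conclusion. In the other branch, the claim that the block-intersection multiset must be constant is also false: Table~\ref{tbl-utype} contains several neighbour-transitive types with non-constant intersections, e.g.\ $\{k\}$ with $k<a$ (a $k$-subset inside one block), $\{1^k\}$, $\{\frac{k-1}{2},\frac{k+1}{2}\}$ for $b=2$, and $\{1,2^{(k-1)/2}\}$ for $a=2$, together with their complements. (You have also interchanged the labels: the constant-type and single-type codes belong to Example~\ref{ex:imprim}, while the union-of-blocks codes are Example~\ref{ex:imprim2}.) The paper's proof has to run a careful subcase analysis ($b=2$; $a=2,b\ge3$; $a,b\ge3$ split by $e_1-e_b$), after first reducing to $k\le v/2$ by complementation, precisely because these non-constant types survive the neighbour-orbit comparison; your proposed dichotomy collapses them prematurely.
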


For a code $\Ga$ and group $G\leq\Aut(\Ga)\cap\Sym(\V)$, we say that $\Ga$ is
$G$-\emph{strongly incidence-transitive} if $G$ is transitive on $\Ga$ and,  
for $\ga\in\Ga$,   $G_\ga$ is 
transitive  on the set of pairs $(\u,\u')$ with $\u\in\ga, \u'\in \V\setminus\ga$.
It is not hard to see that each strongly incidence transitive code
(which by definition is a proper subset of $\binom{\V}{k}$) is 
incidence transitive, and indeed there exist  incidence transitive codes 
$\Ga$ which are not strongly incidence transitive, necessarily with 
$\delta(\Ga)=1$. Examples of such codes are given in Examples~\ref{ex-intrans}
and~\ref{ex:imprim2}, see Lemmas~\ref{lem-intrans} and~\ref{lem:imprimex2},
respectively. The next result Theorem~\ref{flagtra}
%(If a group $H$ acts on two sets $\U$ and $\U'$, then by its product action on 
%$\U\times\U'$ we mean its natural action with $h\in H$ mapping $(\u,\u')$ to $(\u^h,{\u'}^h)$.)
links the notions of incidence-transitivity, 
strong incidence-transitivity and neighbour-transitivity, 
and provides critical information about the case where $G$ is primitive on $\V$.
It is proved in Section~\ref{sect:primitive}.

%For $G\leq\Aut(\Ga)$ we say that $\Ga$ is 
%$G$-neighbour-transitive if $G$ is transitive on both $\Ga$ and $\Ga_1$; 
%similarly for $G$-incidence-transitive, etc.

\begin{theorem}\label{flagtra}
Let $\Ga\subset\binom{\V}{k}$  and $G\leq\Aut(\Ga)\cap\Sym(\V)$, 
where $2\leq k\leq|\V|-2$.  
\begin{enumerate}
\item[(a)] The code  $\Ga$ is $G$-strongly incidence-transitive if and only if 
$\Ga$ is $G$-incidence-transitive and $\de(\Ga)\geq2$.
\item[(b)] If $\de(\Ga)\geq3$ and $\Ga$ is $G$-neighbour-transitive, then $\Ga$ is 
$G$-strongly incidence-transitive.
\item[(c)] If $G$ is primitive on $\V$ and $\Ga$ is $G$-strongly incidence-transitive, 
then $G$ is $2$-transitive on $\V$.
\end{enumerate}
\end{theorem}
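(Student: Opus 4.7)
My plan for Theorem~\ref{flagtra} is to handle parts (a) and (b) via the correspondence between edges of $J(v,k)$ at a codeword $\ga$ and swap-pairs $(\u,\u')\in\ga\times(\V\setminus\ga)$, where $(\ga,\ga_1)\leftrightarrow(\u,\u')$ with $\ga_1=(\ga\setminus\{\u\})\cup\{\u'\}$, and then concentrate on part (c), where the heart of the argument is a $G$-invariant preorder on $\V$ collapsed by primitivity. For part (a), when $\de(\Ga)\geq 2$ every $k$-subset at distance $1$ from a codeword lies in $\Ga_1$, so $G_\ga$-transitivity on swap-pairs is equivalent to $G_\ga$-transitivity on the adjacent codeword-neighbours of $\ga$; together with $G$-transitivity on $\Ga$ this yields the equivalence. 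To complete the forward direction I must exclude $\de(\Ga)=1$: if two codewords $\ga,\ga'$ were adjacent in $J(v,k)$, then $G_\ga$-transitivity on swap-pairs would force every $k$-subset adjacent to $\ga$ into $\Ga$; propagating via $G$-transitivity on $\Ga$ would make $\Ga$ closed under taking $J(v,k)$-neighbours and hence a union of connected components of the connected graph $J(v,k)$, contradicting properness. Part (b) is an equivariant transfer: when $\de(\Ga)\geq 3$ each neighbour has a unique closest codeword, giving a $G$-equivariant projection $\pi\colon\Ga_1\to\Ga$ whose fibre $\pi^{-1}(\ga)$ is exactly the set of $k$-subsets at distance $1$ from $\ga$; $G$-transitivity on $\Ga_1$ then forces $G_\ga$-transitivity on $\pi^{-1}(\ga)$, which via the correspondence is strong incidence-transitivity.

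For part (c), the substantive one, I would introduce $\u\leq\u'$ on $\V$ to mean that every codeword containing $\u$ also contains $\u'$, equivalently $\u'\in R(\u):=\bigcap_{\ga\in\Ga,\,\ga\ni\u}\ga$. This preorder is $G$-invariant, so its symmetrisation $\equiv$ is a $G$-invariant equivalence relation on $\V$; by primitivity, $\equiv$ is either trivial or universal. The universal case forces every codeword to be $\emptyset$ or $\V$, contradicting $2\leq k\leq v-2$, so $\leq$ is a genuine partial order. Since $G$ acts transitively on the finite set $\V$ preserving $\leq$, the standard maximal-element argument (any $G$-image of a maximal element is maximal, so every element is maximal) collapses $\leq$ to equality, giving $R(\u)=\{\u\}$ for every $\u\in\V$.

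To finish I would combine $R(\u)=\{\u\}$ with two consequences of strong incidence-transitivity: (i) $G_{\ga,\u}$ is transitive on $\V\setminus\ga$ whenever $\u\in\ga\in\Ga$, and (ii) $G$ is transitive on flags $\{(\u,\ga):\u\in\ga\in\Ga\}$, so $G_\u$ is transitive on the codewords containing $\u$. Fix $\u$ and take any $\u'\neq\u$: the equality $R(\u)=\{\u\}$ produces $\ga\in\Ga$ with $\u\in\ga$, $\u'\notin\ga$, and by (i) the whole of $\V\setminus\ga$ lies in the $G_\u$-orbit $O$ of $\u'$. Using (ii) to transport $\V\setminus\ga$ through every codeword containing $\u$ expands $O$ to $\bigcup_{\ga\in\Ga,\,\ga\ni\u}(\V\setminus\ga)=\V\setminus R(\u)=\V\setminus\{\u\}$, so $G_\u$ is transitive on $\V\setminus\{\u\}$ and $G$ is $2$-transitive on $\V$. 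The main obstacle, and the only point where primitivity is used essentially, is the collapse of the preorder; once $R(\u)=\{\u\}$ is in hand the rest is a formal orbit-chase.
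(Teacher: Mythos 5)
Your proposal is correct, and parts (a) and (b) coincide with the paper's argument: part (a) rests on the $G_\ga$-equivariant bijection between $J(\ga)$ and $\ga\times\ov\ga$ together with the connectivity of $J(v,k)$ (the paper packages the forward direction as Lemma~\ref{str-flagtra}), and your projection $\pi\colon\Ga_1\to\Ga$ in part (b) is exactly the paper's observation that $\de(\Ga)\geq3$ forces each neighbour to have a unique codeword at distance~$1$, so that an element of $G$ matching up neighbours automatically matches up the incident codewords.

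Part (c) has the same skeleton as the paper's proof --- first show $\bigcap\{\ga\in\Ga \mid \u\in\ga\}=\{\u\}$ using primitivity, then chain $G_\u$-orbits using transitivity of $G_{\ga,\u}$ on $\ov\ga$ --- but you implement both halves differently. For the first half the paper proves directly (Lemma~\ref{lem:Delta}) that the sets $\Delta(\u)$ form a $G$-invariant block system, using the fact that they all have the same cardinality; your route through the $G$-invariant preorder $\u\leq\u'$, its symmetrisation, and the maximal-element collapse is longer but equally valid, and it isolates cleanly where primitivity is used (only to kill the symmetrised relation) versus where mere transitivity suffices. For the second half the paper reduces to $k\leq v/2$ by passing to the complementary code, picks two codewords $\ga\not\ni\w$, $\ga'\not\ni\w'$ through $\u$, and uses $|\ov\ga|+|\ov{\ga'}|\geq v$ to find a common point $\v\in\ov\ga\cap\ov{\ga'}$ through which to compose two stabiliser elements; you instead fix a single codeword $\ga$ through $\u$ avoiding $\u'$, note that $\ov\ga$ is a $G_{\ga,\u}$-orbit inside the $G_\u$-orbit of $\u'$, and then sweep this orbit over all codewords through $\u$ via flag-transitivity to cover $\V\setminus\Delta(\u)=\V\setminus\{\u\}$. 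Your version buys you two small things: it avoids the $k\leq v/2$ reduction and the counting step entirely, at the price of explicitly invoking transitivity of $G_\u$ on the codewords through $\u$ (which does follow from strong incidence-transitivity, since $G_\ga$ is transitive on $\ga$). Both arguments are sound.
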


In particular, if  $\Gamma$ is $G$-neighbour-transitive with $\delta(\Gamma)\geq3$ 
and $G$ is primitive on $\V$  then, by Theorem~\ref{flagtra}, $G$ 
is $2$-transitive on $\V$ and $\Gamma$ is strongly $G$-incidence-transitive.  
Application of the classification of the finite $2$-transitive
permutation groups opens up the possibility of classifying such codes. Moreover,
Theorem~\ref{flagtra} suggests that the possibly larger class of 
$G$-strongly-incidence-transitive codes (with $\delta(\Gamma)\geq2$) may also be 
analysed in this way.

To make progress with this analysis, 
we divide the finite $2$-transitive permutation groups according to whether or not 
they lie in an infinite family of $2$-transitive groups. Those which do not lie 
in an infinite family we call {sporadic}, and these cases are dealt with in \cite{NP},
yielding 27 strongly-incidence-transitive (code, group) pairs \cite[Table 1]{NP}.
In the rest of this paper we focus on the infinite families of finite 
$2$-transitive groups $G$.
As mentioned above, we do not treat the $2$-transitive actions of symplectic 
groups on quadratic forms, and indeed the open Problem~\ref{prob1} may be broadened 
to include the strongly-incidence-transitve case.

\begin{problem}\label{prob2}{ 
   Classify the $G$-strongly incidence-transitive codes $\Ga\subset J(v,k)$, 
where $G=\Sp(2n,2)$ and $v=2^{2n-1}\pm 2^{n-1}$.
}
\end{problem}
 
The other infinite families of $2$-transitive groups may be subdivided coarsely as 
in Table~\ref{Table1}. The `rank 1 case' is completely analysed in Section~\ref{sec-rank1}, and
we prove there the following classification result.

\begin{table}
\begin{center}
\begin{tabular}{lcl}\hline
\textbf{rank 1}&& the Suzuki, Ree and Unitary groups\\ 
\textbf{affine}&& $G\leq {\rm A}\GaL(\V)$ acting on $\V=\F_q^n$\\
\textbf{linear}&& $\PSL(n,q)\leq G\leq\PGaL(n,q)$ on $\PG(n-1,q)$\\ \hline
\end{tabular}
\end{center}
\caption{Other types of $2$-transitive permutation groups}\label{Table1} 
\end{table}

\begin{theorem}\label{thm-r1}
Suppose that $\Ga\subset \binom{\V}{k}$ is $G$-strongly 
incidence-transitive, where $G$ is $2$-transitive of rank $1$ type on $\V$.
Theneither
\begin{enumerate}
 \item[(a)]   $\PSU(3,q)\leq G\leq \PGaU(3,q)$, and either $\Ga$ or $\ov{\Ga}$ 
is the classical unital with $\delta(\Ga)=q$, as in Example~$\ref{ex-u}$; or
\item[(b)] $G=\PSU(3,3).2$, $k=12$ or $16$, and 
$\Ga$ or $\ov{\Ga}$ is the set of `bases' with $\delta(\Ga)=6$.
\end{enumerate}
\end{theorem}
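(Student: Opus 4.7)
The plan is to combine Theorem~\ref{flagtra} with the classification of rank $1$ finite $2$-transitive permutation groups. The hypothesis restricts $(G,\V)$ to one of three well-understood families: $\Sz(q) \trianglelefteq G \leq \Aut(\Sz(q))$ on $v = q^2 + 1$ points (with $q$ an odd power of $2$); $\Ree(q) \trianglelefteq G \leq \Aut(\Ree(q))$ on $v = q^3 + 1$ points (with $q$ an odd power of $3$); or $\PSU(3,q) \trianglelefteq G \leq \PGaU(3,q)$ on $v = q^3 + 1$ points. In each case the point stabiliser $G_\u$ is a Frobenius-type group whose subgroup lattice and orbit lengths on $\V \setminus \{\u\}$ are completely known. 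Strong incidence-transitivity of $\Gamma$ translates directly to the statement that $H := G_\gamma$ has exactly two orbits on $\V$, namely $\gamma$ of size $k$ and $\V \setminus \gamma$ of size $v - k$, and is transitive on the Cartesian product $\gamma \times (\V \setminus \gamma)$.

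First I would fix $\u \in \gamma$ and study $H_\u = G_{\gamma,\u}$ as a subgroup of the two-point stabiliser $G_\u$. One orbit of $H_\u$ on $\V \setminus \{\u\}$ must be all of $\V \setminus \gamma$ (of length $v - k$), while the remaining $k - 1$ points form a union of $H_\u$-orbits which $H$ fuses, together with $\u$, into a single $H$-orbit of length $k$. Using the maximal-subgroup lists for each rank $1$ group, I would enumerate up to conjugacy the subgroups $H \leq G$ that act with exactly two orbits on $\V$ such that a point stabiliser on one orbit is transitive on the other. The candidates come from stabilisers of natural geometric objects (secant lines to the Hermitian curve, Baer sublines, and so on), subfield subgroups such as $\Sz(q_0)$, $\Ree(q_0)$, or $\PSU(3,q_0)$ when $q$ is a proper prime power, normalisers of Singer-type cyclic subgroups, and involution centralisers. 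Each candidate yields explicit numerical constraints on $k$ and $v$ which I would check against the transitivity-on-pairs requirement.

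In the Suzuki and Ree cases the two-point stabiliser in $G$ is cyclic of order $q - 1$ (up to field-automorphism extensions), and its orbits on the remaining $q^2 - 1$ or $q^3 - 1$ points are too restricted to admit the required configuration, so these families should yield no examples. In the unitary case, the stabiliser in $\PSU(3,q)$ of a Baer subline $\gamma$ contained in the Hermitian curve $\mathcal{H}$ acts as $\PGL(2,q)$ on the $q + 1$ points of $\gamma$ and transitively on the $q^3 - q$ remaining points; this produces the classical unital of part~(a), with $\delta(\Gamma) = q$ following since distinct blocks of a unital meet in at most one point, and passing to the complement yields $\ov{\Gamma}$. Part~(b) then arises through the exceptional isomorphism $\PSU(3,3).2 \cong G_2(2)$ acting on $v = 28$ points, which admits additional subgroups having two orbits of sizes $12$ and $16$ that satisfy the transitivity-on-pairs condition, corresponding to the `bases' configuration. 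The main obstacle is making the case analysis exhaustive: one must systematically rule out the subfield and Singer-normaliser subgroups of $\PSU(3,q)$ that could \emph{a priori} satisfy the two-orbit property, and confirm the precise minimum-distance claims in each surviving case.
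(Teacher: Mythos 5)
Your strategy is the paper's strategy: restrict to the three rank-$1$ families, translate strong incidence-transitivity into ``$G_\ga$ has two orbits $\ga,\ov\ga$ and is transitive on $\ga\times\ov\ga$'', and run this against the known subgroup structure of $\Sz(q)$, $\Ree(q)$ and $\PSU(3,q)$, with the classical unital and the $\PSU(3,3).2$ `bases' emerging as the survivors. The conclusions you reach are all correct. But the proposal stops exactly where the proof starts to require work, and two of the steps you assert would not close in the form you give them.

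First, for $\Sz(q)$ and $\Ree(q)$ the smallness of the two-point stabiliser (cyclic of order $q-1$) is not the decisive mechanism: it only gives $|G_{\ga,\u,\w}|$ dividing $(q-1)a$, hence a \emph{lower} bound $|G_\ga|=k(v-k)\,|G_{\ga,\u,\w}|\geq k(v-k)\geq 3(v-3)$. What kills these families is the matching \emph{upper} bound: every non-parabolic maximal subgroup of $T$ (of order $2(q-1)$, $4(q\pm r+1)$ or $|\Sz(q_0)|$, respectively $6(q+1)$, $2|\PSL(2,q)|$, $6(q\pm r+1)$ or $|\Ree(q_0)|$) has order, even after multiplying by $|\Out(T)|$, less than $3(v-2)\leq k(v-k)$. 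Second, in the unitary case the elimination of the imprimitive, extension-field, subfield and small almost-simple subgroups is not a routine check of the two-orbit property: the paper must first manufacture lower bounds on $k$ before the inequality $k(v-k)\leq|G_\ga|$ bites --- e.g.\ $k\geq \frac{q+1}{(2,q-1)}$ in the imprimitive case via a gcd computation, and $k\geq q$ or $k\geq q_0^3$ in the subfield cases by exhibiting a unipotent subgroup $H_0\leq G_\ga$ with two orbits of that length through points in different $G_\ga$-orbits. Moreover the cases $q\in\{3,4,5\}$, including the one that actually produces conclusion (b), are settled by explicit ({\sf GAP}) computation of orbit lengths of concrete subgroups such as $4^2{:}S_3$, $5^2{:}D_{12}$ and $M_{10}$; the $28$-point identification alone does not verify transitivity on $\ga\times\ov\ga$ or the value $\delta(\Ga)=6$. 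Finally, you should make explicit the reduction that once $\ga$ and $\ov\ga$ are assumed not to be unital blocks, $G_\ga$ acts irreducibly on $V(3,q^2)$ --- that is what confines $G_\ga$ to the irreducible maximal subgroups in the first place. None of this changes your architecture, but the case analysis you defer as ``the main obstacle'' is the proof.
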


The affine and linear cases are analysed in Sections~\ref{sec-aff} and~\ref{sec-linear}.
Propositions~\ref{aff1},~\ref{aff2},~\ref{lem-2dim}, and~\ref{lem-lin2} of these 
sections yield the following information about the possible strongly 
incidence-transitive codes $\Ga$ in these cases. Here a codeword 
$\gamma\in\Gamma$ is a subset of points of an affine or projective space 
$\V$. We say that $\gamma$ is of 
\emph{class} $[m_1,m_2,m_3]_1$ if each (affine or projective) line meets $\gamma$
in $m_1, m_2$ or $m_3$ points. 

\begin{theorem}\label{thm-al}
Suppose that $\Ga\subset \binom{\V}{k}$ is $G$-strongly 
incidence-transitive, where $G$ is $2$-transitive of affine or linear type on $\V$.
Let $\ga\in\Ga$. Then either $G$ and $\ga$ or $\ov\ga$ are as in one of the lines of Table~
$\ref{tbl-2trans}$, or one of the following holds.
\begin{enumerate}
\item[(a)] $G=\AGL(n,4)$ and $\V=\F_4^n$ with $n\geq2$, $\ga$ is of class $[0,2,4]_1$, 
and $\frac{4^n+2}{3}\leq k\leq \frac{2(4^n-1)}{3}$; or

\item[(b)] $G=\AGL(n,16)$ and $\V=\F_{16}^n$ with $n\geq2$, and replacing $\ga$ by 
$\ov\ga$ if necessary, $\ga$ is of class $[0,4,16]_1$, 
and $\frac{16^n+4}{5}\leq k\leq \frac{4(16^n-1)}{15}$; or

\item[(c)] $G=\PGaL(n,q)$, $\V=\PG(n-1,q)$ of size $v=\frac{q^n-1}{q-1}$
with $n\geq3$ and, replacing $\ga$ by 
$\ov\ga$ if necessary, $\ga$ is of class $[0,x, q+1]_1$, where either
\begin{enumerate}
 \item[(i)] $x=2$ and $\frac{v-1}{q}+1\leq k\leq \frac{2(v-1)}{q}$, or
\item[(ii)] $x=q_0+1$ where $q=q_0^2$, and $\frac{v-1}{q_0}+1\leq k\leq \frac{v-1}{q_0} + \frac{v-1}{q}$. 
\end{enumerate}
\end{enumerate} 
\end{theorem}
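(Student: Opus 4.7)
My plan is first to extract sharp geometric constraints on a codeword $\ga\in\Ga$ from the $G$-strongly incidence-transitive hypothesis, and then to perform a case analysis along the classification of finite $2$-transitive groups of affine and linear type. Because $G\leq\AGaL(\V)$ in the affine case and $G\leq\PGaL(n,q)$ in the linear case, $G$ preserves the set of (affine or projective) lines of $\V$. The hypothesis that $G_\ga$ is transitive on pairs $(\u,\u')$ with $\u\in\ga$ and $\u'\in\V\setminus\ga$ forces $G_\ga$ to be transitive on the set of \emph{mixed} lines---those meeting both $\ga$ and its complement. Since a line is determined by any two of its points, this yields the key dichotomy: every line $L$ of $\V$ meets $\ga$ in $0$, $m$, or $|L|$ points, for a single integer $m$ depending only on $\ga$, so $\ga$ is of class $[0,m,|L|]_1$, with $|L|=q$ in the affine case and $|L|=q+1$ in the linear case.

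I would then exploit transitivity of $G_\ga$ on $\ga$ and on $\V\setminus\ga$ to extract counting constraints. Fixing $\u\in\ga$, the mixed lines through $\u$ partition $\V\setminus\ga$ into pieces of size $|L|-m$, giving $(|L|-m)\mid(v-k)$; fixing a non-codeword instead gives $m\mid k$; and a global line count ties these together with $k$, $v$, $m$ and $|L|$. Together with the orbit structure of $G_\u$ on the lines through $\u$, these narrow the admissible triples $(|L|,m,k)$ substantially, and yield the explicit $k$-ranges recorded in (a), (b) and (c).

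With the line class $[0,m,|L|]_1$ in hand, I would invoke the classification of finite $2$-transitive groups of affine and linear type---Hering's theorem in the affine case---to traverse the possibilities. In the linear case $\PSL(n,q)\leq G\leq\PGaL(n,q)$, a subset of class $[0,x,q+1]_1$ whose stabiliser acts transitively on codeword/non-codeword pairs is, up to complementation, either a projective subspace or union thereof (giving the entries of Table~\ref{tbl-2trans}), a union of lines through a common point with $x=2$ (case (c)(i)), or a Baer subgeometry over $\F_{q_0}$ when $q=q_0^2$ with $x=q_0+1$ (case (c)(ii)). In the affine case, $\ga$ is forced to be, up to complementation, either a union of parallel affine subspaces (covered by Table~\ref{tbl-2trans}) or, for the special fields $\F_4$ and $\F_{16}$, a configuration built from subfield cosets realising the $[0,2,4]_1$ and $[0,4,16]_1$ classes of (a) and (b); these arise because $\F_4$ and $\F_{16}$ are the quadratic extensions $\F_2\subset\F_4$ and $\F_4\subset\F_{16}$, whose subfield structure realises the intersection pattern while admitting a large enough affine stabiliser.

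The main obstacle I anticipate is the affine case, as the list of $2$-transitive affine groups is far richer than the linear one, including several infinite series beyond $\AGL(n,q)$ together with a handful of sporadic examples. For most of these series the strong incidence transitivity leaves no room beyond the generic subspace-union examples of Table~\ref{tbl-2trans}; but for $\AGL(n,4)$ and $\AGL(n,16)$ the extra structure coming from the proper subfield produces genuinely new codes, and verifying both that these are the only exceptions and that the stated $k$-ranges are sharp is the most delicate part of the argument. Once this is done, Theorem~\ref{thm-al} follows by packaging the individual conclusions into Propositions~\ref{aff1},~\ref{aff2},~\ref{lem-2dim} and~\ref{lem-lin2} and assembling them.
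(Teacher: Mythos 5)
Your opening move coincides with the paper's: transitivity of $G_\ga$ on $\ga\times\ov\ga$ makes the ``mixed'' lines a single $G_\ga$-orbit, so $\ga$ has class $[0,m,|L|]_1$ for a single $m$, and the two orbit-containment counts (the points of $\ga\setminus\{\u\}$ lying on mixed lines through a fixed $\u\in\ga$ form a $G_{\ga,\u}$-orbit inside $\ga\setminus\{\u\}$, and dually for $\ov\ga$) give exactly the stated $k$-ranges once $m$ is known. That part of your plan is sound.

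The genuine gap is in how you determine the admissible values of $m$ (and hence of $q$). You propose to get these from divisibility conditions, the orbit structure of $G_\u$ on the lines through $\u$, and Hering's theorem. None of these can deliver the conclusion. Hering's theorem constrains the global group $G$ (and in the linear case $G$ already contains $\PSL(n,q)$, so there is nothing left to classify); it says nothing about which intersection numbers a mixed line can have. Divisibility conditions such as $m\mid k$ and $(|L|-m)\mid(v-k)$ are far too weak to force $m=2$ only when $q=4$ and $m=4$ only when $q=16$; they do not exclude, say, $m=3$ in $\AG(n,9)$. The missing step is a reduction to dimension one: because the mixed lines through a fixed point of $\ga$ (respectively of $\ov\ga$) partition $\ov\ga$ (respectively $\ga$) into $G_{\ga,\u}$-invariant blocks, the group induced on a single mixed line $\lam$ by $G_{\ga,\lam}$ --- a subgroup of $\AGaL(1,q)$ or of $\PGaL(2,q)$ --- is itself transitive on $(\ga\cap\lam)\times(\ov\ga\cap\lam)$. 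One must then classify the one-dimensional configurations with this property, and that is a genuinely hard group-theoretic computation: in the affine case it uses Zsigmondy's primitive prime divisors of $p^a-1$ to show that, besides $m\in\{1,q-1\}$, only $q=4$, $m=2$ and $q=16$, $m=4$ survive; in the projective case it uses Dickson's list of subgroups of $\PSL(2,q)$ to show $m\in\{1,2,q_0+1\}$ up to complementation. Without this local analysis your case distinction has no engine. A secondary issue: in cases (a)--(c) you claim to identify $\ga$ itself (unions of subfield cosets, unions of lines through a common point, Baer subgeometries). The theorem asserts only the line class and the $k$-range; the geometric classification of such sets is much harder and is not needed here (it was carried out later by Durante, who showed in particular that case (c) admits no examples at all), so you should not lean on such a classification being available.
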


\begin{center}
\begin{table}
% use packages: array
\footnotesize
\begin{tabular}{ccclcl}\hline
$v$  &$\min\{k,v-k\}$    &$\delta(\Ga)$& \ \qquad $\ga$ or $\ov\ga$ & $G$ & Reference    \\ \hline \hline
     &       &             & Affine&     &              \\  \hline
$q^n$& $q^s$ &$q^s-q^{s-1}$&$s$-subspace & ${\rm A}\GaL(n,q)$&Example~\ref{ex-aff2}    \\ 
$16$& $4$ &$3$&Baer-subline & ${\rm A}\GaL(1,16)$&Proposition~\ref{aff1}    \\  \hline
%$4^n$& $2.4^{n-1}$ &???    &$\{v\in\V|v_1\in\F_2\}$ & ${\rm A}\GaL(n,4)$&Example~\ref{ex-aff3}    \\ 
%$16^n$& $4.16^{n-1}$ &???    &$\{v\in\V|v_1\in\F_4\}$ & ${\rm A}\GaL(n,16)$&Example~\ref{ex-aff3}    \\ 
%$16$  & $6$   &     ???   &hyperoval & ${\rm A}\GaL(2,4)$&Example~\ref{ex-aff4}    \\ \hline
     &       &             & Linear&     &              \\  \hline 
$\frac{q^n-1}{q-1}$& $\frac{q^s-1}{q-1}$ &$q^{s-1}$&$(s-1)$-subspace & ${\rm P}\GaL(n,q)$&Example~\ref{ex-lin1}\\
$q_0^2+1$& $q_0+1$ &$q_0$&Baer-subline & ${\rm P}\GaL(2,q_0^2)$&Example~\ref{ex-lin-2dim}    \\  \hline
%      &       &             & Unitary&     &              \\  \hline  
% $q^3+1$& $q+1$ &$q-1$&block of unital & ${\rm P\Ga U}(3,q)$&Example~\ref{ex-u}    \\ 
% $28$& $12$ &$6$&base & ${\rm PGU}(3,3)$&Proposition~\ref{lem-lin2}    \\  \hline 
 &&&&& \\
 \end{tabular}
\normalsize
\caption{Strongly incidence-transitive codes for Theorem~\ref{thm-al}.}\label{tbl-2trans}
 \end{table}
 \end{center}

There are more examples of strongly
incidence-transitive codes with affine or linear groups than the ones
listed in Table~\ref{tbl-2trans}. Example~\ref{ex-aff4} gives another
such code that satisfies Theorem~\ref{thm-al}~(a)
with $n=2$; in that example $\ga$ is the famous 6-point 
2-transitive hyperoval. I asked about the possible  
structures of  affine and projective point sets $\ga$ of classes $[0,x,q]_1$ 
or $[0,x,q+1]_1$ during a lecture I gave in 2012 in Ferrara 
at a Conference on Finite Geometry in honour of Frank De Clerck.
Nicola Durante, who was present, harnessed the known results about such 
subsets and developed them a great deal further in~\cite{Dur}.
He was able to  classifiy all such sets with the relevant symmetry properties, 
and hence classify all strongly incidence-transitive codes
in cases (a)--(c) of Theorem~\ref{thm-al}. We summarise his findings in Remark~\ref{rem-Dur}.

\begin{remark}\label{rem-Dur}
{\rm 
In the affine case, Durante classified geometrically all point subsets of 
$\AG(n,q)$ of class $[0,\sqrt{q},q]_1$ in Propositions 2.3, 3.6, 
Corollary 2.4 and Theorems 3.13 and 3.15 of \cite{Dur}. He used this information
to classify in \cite[Theorem 3.18]{Dur} all such subsets $\ga$ 
satisfying the coniditions in Theorem~\ref{thm-al}~(a) and~(b).  
For $q=4$, that is, for Theorem~\ref{thm-al}~(a), the examples for $\ga$ or $\ov\ga$
are (i) cyclinders with base the $2$-transitive hyperoval or its 
complement in $\AG(2,4)$, and (ii) unions of two parallel hyperplanes.
For $q=16$, that is, for Theorem~\ref{thm-al}~(b), the examples for 
$\ga$ or $\ov\ga$ are unions of four parallel hyperplanes of $\AG(n,16)$
(so that four-point intersections of affine lines with the set are Baer sub-lines).
In the projective case, that is, the case of Theorem~\ref{thm-al}~(c), 
Durante~\cite[Theorems 3.2 and 3.3]{Dur} drew together known 
results about sets of projective points of class $[0,x,q+1]_1$, 
and proved  that there are no
examples satisfying the conditions in Theorem~\ref{thm-al}~(c).  
} 
\end{remark}

We conclude this introductory section with a short commentary on the 
examples in Theorem~\ref{notprim}, and the conditions in Theorem~\ref{flagtra}.

\subsection{Remarks on the examples in Theorem~\ref{notprim}}\label{rem1}

(a)\quad 
The codes in Example~\ref{ex-intrans} are precisely the 
completely regular codes of `strength zero' classified 
by Meyerowitz~\cite{Meyerowitz1,Meyerowitz2}. On the 
other hand, some, but not all, of the codes in 
Examples~\ref{ex:imprim} and {\rm\ref{ex:imprim2}} 
are completely regular.  Further, some, but not all, 
of the codes in Example~\ref{ex:imprim2} are examples 
of groupwise complete designs constructed by 
Martin~\cite{Martin94}; and some of the codes 
in Example~\ref{rem:imprim} were discovered as 
completely transitive designs in \cite{GP}. 
See  Remarks~\ref{rem:imprim} and~\ref{rem:groupwise} 
for more details.

\medskip\noindent
(b)\quad Most of the neighbour-transitive codes
classified in Theorem~\ref{notprim} have minimum distance $\de(\Ga)=1$, the exceptions being 
the codes consisting of a single codeword, the blocks of a partition, or 
`blow-ups' of smaller neighbour-transitive codes. See 
Lemmas~\ref{lem-intrans}, \ref{lem-imprim} and \ref{lem:imprimex2}.

\subsection{Remarks on the transitivity properties  
in Theorem~\ref{flagtra}}\label{rem-flag-tra}

(a)\quad As we mentioned above, it is  possible for  a code to be 
incidence-transitive but not strongly incidence-transitive, so the condition on 
$\delta(\Ga)$ in part (a) of Theorem~\ref{flagtra} cannot be dropped.

\medskip\noindent
(b)\quad 
Strongly incidence-transitive codes exist with minimum distance 
as small as $2$ (see Example~\ref{ex:O43}), so the converse of part (b) 
of Theorem~\ref{flagtra} is false.

\medskip\noindent
(c)\quad There exist $G$-imprimitive, $G$-strongly incidence transitive codes (see 
Lemma~\ref{lem:imprimex2}), so the primitivity condition on $G$ cannot be dropped 
from part (c) of Theorem~\ref{flagtra}.
In addition there exist neighbour-transitives codes which are 
neither incidence-transitive nor strongly incidence transitive, and for which the automorphism 
group is 2-transitive on $\V$ (see Example~\ref{ex:de1b}). Thus the converse of part (c) is false.

\medskip\noindent
(d)\quad 
Each code $\Ga\subset\binom{\V}{k}$ has a kind of dual  defined  as follows. 
For a subset $\alpha \subset\V$, we write $\overline\alpha=\V\setminus\alpha$. 
The \emph{complementary code} of $\Ga$ is $\overline\Ga:=\{\overline\ga\,|\,\ga\in\Ga\}$. It is a code in 
$J(v,v-k)$ with neighbour set $\{\overline\ga\,|\,\ga\in\Ga_1\}$. Moreover $\de(\overline\Ga)=\de(\Ga)$, 
and any of the properties of neighbour-transitivity, incidence-transitivity, or strong 
incidence-transitivity holds for $\Ga$ if and only if it holds for $\overline\Ga$. 
Thus we may assume that $k\leq v/2$ for the proof of Theorem~\ref{flagtra}, and we do this 
also at various other stages of our investigation.

%%$\mathbb{\vp}, \mathbf{\vp}, \vp,\quad $
%{\boldmath{$\vp$}}, $\ov\vp, \underline{\vp}, 
% \underline{ \underline{\vp}}$

%$\mathbf{X}\quad \mathbb{X},\quad, X$

\section{Preliminaries and examples}\label{sect:prelims}

\subsection{Completely regular and completely transitive codes in graphs}\label{creg}
A code $\Gamma$ in a connected graph $\Sigma$ determines a 
distance partition $\{\Ga_0,\dots,\Ga_{r-1}\}$ of the vertex 
set of $\Sigma$, where $\Ga_0=\Ga$ and, for $i>0$, $\Ga_i$ is 
the set of vertices  which are at distance $i$ from at least 
one codeword in $\Ga$, and at distance at least $i$ from each 
codeword. For the last non-empty set $\Ga_{r-1}$, the parameter 
$r$ is called the covering index of $\Ga$. The code is 
\emph{completely regular} if the partition $\{\Ga_0,\dots,
\Ga_{r-1}\}$ is equitable, that is to say, for any $i,j\in
\{0,\dots,r-1\}$ and any $\gamma\in \Ga_i$, the number of 
vertices of $\Ga_j$ adjacent to $\gamma$ is independent of 
the choice of $\gamma$ in $\Ga_i$, and depends only on $i$ 
and $j$.  Further, $\Ga$ is called \emph{completely transitive} 
if the setwise stabiliser of $\Ga$ in $\Aut(\Sigma)$ (which 
automatically fixes each of the $\Ga_i$ setwise) is transitive 
on each $\Ga_i$.

\subsection{Codes in Johnson graphs: notation and small examples}\label{notn}

It is useful to denote the arc set of the Johnson graph $J(v,k)$ by $J$, that is, 
\begin{equation}\label{eq-j}
 J:=\{(\al,\be)\,|\, \al,\be\in\binom{\V}{k},\,|\al\cap\be|=k-1\}
\end{equation}
and for $\al\in\binom{\V}{k}$, to write $J(\al)=\{\be\,|\,(\al,\be)\in J\}$.
For a code $\Ga\subset \binom{\V}{k}$ with neighbour set $\Ga_1$, 
pairs $(\ga,\ga_1)\in\Ga\times\Ga_1$ with $d(\ga,\ga_1)=1$ in $J(v,k)$ 
are called the \emph{incidences} of $\Gamma$: the set of incidences is the 
subset $J\cap (\Ga\times\Ga_1)$ and, for $G\leq\Aut(\Ga)$,  $\Ga$ is $G$-incidence-transitive 
if $G$ is transitive on $J\cap (\Ga\times\Ga_1)$.

For a subset $\al\subseteq\V$ we write $\ov\al:=\V\setminus\al$, so that,
for $G\leq\Aut(\Ga)\cap\Sym(\V)$,  
$\Ga$ is $G$-strongly incidence-transitive if $G$ is transitive on $\Ga$ 
and, for $\ga\in\Ga$, $G_\ga$ acts 
transitively on $\ga\times\ov\ga=\{(\u,\w)\,|\,\u\in\ga, \w\in\ov\ga\}$. 
%If this holds for $G=\Aut(\Ga)$ we say that $\Ga$ is strongly incidence-transitive.
First we verify that this concept is indeed a strengthening of incidence-transitivity.

\begin{lemma}\label{str-flagtra}
If $\Ga$ is $G$-strongly incidence-transitive, %and $\Ga\ne\binom{\V}{k}$, 
then $\Ga$ is $G$-incid\-ence-transitive and $\de(\Ga)\geq2$. 
%and $G$ acts transitively on  $J\cap (\Ga\times\Ga_1)$. 
%Moreover, there exist incid\-ence-transitive codes $\Ga$ that are not strongly incidence-transitive.
\end{lemma}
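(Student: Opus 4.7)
The plan is to prove the two assertions separately, with the second actually being the easier consequence.

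For the minimum distance bound $\delta(\Gamma)\geq 2$, I would argue by contradiction. Suppose $\delta(\Gamma)=1$, so there exist distinct codewords $\gamma,\gamma'\in\Gamma$ with $|\gamma\cap\gamma'|=k-1$; thus there is a unique pair $(\u,\w)\in\gamma\times\ov\gamma$ such that $\gamma'=(\gamma\setminus\{\u\})\cup\{\w\}$. By strong incidence-transitivity, $G_\gamma$ acts transitively on $\gamma\times\ov\gamma$, so for every pair $(\u',\w')\in\gamma\times\ov\gamma$ there is $g\in G_\gamma$ sending $(\u,\w)$ to $(\u',\w')$. Then $(\gamma')^g=(\gamma\setminus\{\u'\})\cup\{\w'\}$, and since $G\leq\Aut(\Gamma)$ this set lies in $\Gamma$. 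In other words every neighbour of $\gamma$ in the graph $J(v,k)$ is a codeword, i.e.\ $J(\gamma)\subseteq\Gamma$. Using transitivity of $G$ on $\Gamma$, the same holds for every codeword, so $\Gamma$ is closed under taking graph-neighbours. Connectedness of $J(v,k)$ then forces $\Gamma=\binom{\V}{k}$, contradicting the standing assumption that a code is a proper subset of $\binom{\V}{k}$.

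For $G$-incidence-transitivity, let $(\gamma,\gamma_1)$ and $(\gamma',\gamma_1')$ be two incidences in $J\cap(\Gamma\times\Gamma_1)$. By $G$-transitivity on $\Gamma$, pick $g\in G$ with $\gamma^g=\gamma'$; replacing $(\gamma,\gamma_1)$ by its $g$-image (and noting that $G$ preserves $\Gamma_1$ setwise), we may assume $\gamma=\gamma'$. Write $\gamma_1=(\gamma\setminus\{\u\})\cup\{\w\}$ and $\gamma_1'=(\gamma\setminus\{\u'\})\cup\{\w'\}$ with $\u,\u'\in\gamma$ and $\w,\w'\in\ov\gamma$. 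Applying $G_\gamma$-transitivity on $\gamma\times\ov\gamma$, choose $h\in G_\gamma$ with $(\u,\w)^h=(\u',\w')$; then $\gamma_1^h=\gamma_1'$, as required.

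There is no serious obstacle in either step; the two hypotheses dovetail exactly with the two conclusions. The only point that needs care is the contradiction in the first part: one must observe that strong incidence-transitivity applied to a single edge-adjacency in $\Gamma$ forces \emph{all} graph-neighbours of $\gamma$ to belong to $\Gamma$, and then invoke both $G$-transitivity on $\Gamma$ and connectedness of $J(v,k)$ to propagate this to a contradiction with $\Gamma$ being a proper subset.
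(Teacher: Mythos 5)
Your proof is correct and uses essentially the same mechanism as the paper's: the key point in both is that transitivity of $G_\gamma$ on $\gamma\times\ov\gamma$ forces $G_\gamma$ to act transitively on the whole neighbourhood $J(\gamma)$ in $J(v,k)$, so that $J(\gamma)$ lies entirely in $\Gamma$ or entirely in $\Gamma_1$, and connectedness together with $\Gamma\ne\binom{\V}{k}$ rules out the first alternative. The only difference is that you run this as a proof by contradiction while the paper argues directly (noting $\Gamma_1\ne\emptyset$ and concluding $J(\gamma)\subseteq\Gamma_1$), which is a purely stylistic distinction.
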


\begin{proof}
Since  $\Ga\ne\binom{\V}{k}$ and the graph $J(v,k)$ is connected, the neighbour set $\Ga_1$
is non-empty. Then since $G$ is transitive on $\Ga$, it follows that, for each $\ga\in\Ga$, 
$d(\ga,\ga_1)=1$ for some $\ga_1\in\Ga_1$. Moreover, since $G_\ga$ is transitive on 
$\ga\times\ov\ga$ it follows that $G_\ga$ is transitive on $J(\ga)$, and hence $J(\ga)\subseteq\Ga_1$.
Thus  $\de(\Ga)\geq2$, $\Ga_1=\cup_{\ga\in\Ga}J(\ga)$, and $G$ is transitive on  $J\cap (\Ga\times\Ga_1)$,
the set of incidences. 
\end{proof}

Next we give several examples that illustrate various differences between the transitivity concepts.

\begin{example}\label{ex:de1b}{\rm 
Let $\V$ be the set of points of the projective line ${\rm PG}_1(q)$, where
$q\equiv 1\pmod{4}$ and $q>5$, and let $G={\rm PSL}(2,q)$. Let $\Ga$ be 
one of the two $G$-orbits on 3-subsets of $\V$. Then $\Ga_1$ is the 
other $G$-orbit on $3$-subsets. Thus $\Ga$ is neighbour-transitive, but not
incidence-transitive,  and $\de(\Ga)=1$. 
}
\end{example}

\begin{example}\label{ex1}{\rm
Let $|\V|=9, k=3$, and let  $\U=\{U_1|U_2|U_3\}$ be a partition of $\V$ with $3$ 
parts of size $3$. Let $\Ga=\Ga'\cup\U$, where $\Ga'$ is the set of 
all $3$-subsets that contain  
one point from each part of $\U$.
Let $\Delta$ be the set of $3$-subsets containing two points from one part of $\U$ and one point 
from a second part of $\U$. Then $\Aut(\Ga)=\Aut(\Delta)$ is the stabiliser $S_3\wr S_3$ 
of $\U$ in $\Sym(\V)$, and $\U, \Ga', \Delta$ are its three orbits in $\binom{\V}{3}$. 
The two codes $\Ga, \Delta$ in $J(9,3)$ have neighbour sets $\Delta$ and $\Ga$ respectively. 
Thus $\Delta$ is  code-transitive but not neighbour-transitive, while $\Aut(\Ga)$ 
is transitive on $\Ga_1=\Delta$ but not on $\Ga$. These codes have $\de(\Delta)=\de(\Ga)=1$.
}
\end{example}

The following is an incidence-transitive example which is not strongly incidence-transitive, for which the automorphism group has a natural proper
subgroup with weaker transitivity properties than those of the full group, but is still transitive on the neighbours. 

\begin{example}\label{ex:de1}{\rm 
Let $\V$ be the point set  of the projective plane ${\rm PG}_2(q)$, where
$q>5$, let $\lambda$ be a line, and let $G$ be the setwise stabiliser of $\lambda$ in
${\rm PGL}(3,q)$. Let $\Ga$ be the set of all $4$-subsets of $\lambda$. Then $\Ga_1$
consists of all $4$-subsets $\ga_1$ such that $|\ga_1\cap\lambda|=3$, the distance 
$\de(\Ga)=1$, and $\Aut(\Ga)=S_{q^2}\times S_{q+1}$ is incidence-transitive, but not 
strongly incidence-transitive. On the other hand $G$ is transitive on 
$\Ga_1$, while $\Ga$ is a union of at least two $G$-orbits. 
}
\end{example}

Finally we give an example from orthogonal geometry of a strongly 
incidence-transitive code with minimum distance 2, showing that such codes 
with minimum distance 2 do exist. An ovoid in a projective space $\PG(3,q)$ is
a subset of $q^2+1$ points that meets each line in at most two points.

\begin{example}\label{ex:O43}{\rm
Let $G={\rm GO}^-_4(3)$ acting on an ovoid $\V\subset {\rm PG}_3(3)$. 
Then $|\V|=10$. Let
$\Ga$  be the set of 4-subsets of $\V$ (called `circles') 
obtained as intersections of $\V$ with secant planes 
%of ${\rm PG}_3(3)$ 
lying on external lines. Then $|\Ga|=30$, 
$\Ga$ is a $G$-orbit, and $G$ is transitive on $\binom{\V}{4}\setminus \Ga$. 
Thus $\Ga_1=\binom{\V}{4}\setminus \Ga$. Now the subsets in $\Ga$ form the 
block set of a 
$3-(10,4,1)$ design implying that $\de(\Ga)\geq2$, and in fact $\de(\Ga)=2$. 
For a circle $\ga\in\Ga$, $G_\ga={\rm PGL}_2(3)\cong S_4$ is transitive on $\ga$, and
for $\u\in\ga$, $G_{\ga,\u}$ is transitive on the 6 points of $\V\setminus\ga$.
Thus $\Ga$ is $G$-strongly incidence-transitive.
}
\end{example}

\section{The intransitive neighbour-transitive codes}\label{sect:intrans}

Several natural families of neighbour-transitive designs have an 
automorphism group in $\Sym(\V)$ that is intransitive on the underlying point set $\V$. 
We describe these families and prove that 
they are the only exampes with automorphism groups intransitve on $\V$.

\begin{example}\label{ex-intrans}{\rm
Let $\U$ be a proper, non-empty subset of $\V$ and $2\leq k\leq v-2$, where $v=|\V|$. 
Define $\Ga\subset\Xv$ as follows.
\begin{enumerate}
 \item[(a)] If $|\U|>k$, let $\Ga$ be the set of all $k$-subsets of $\U$.
 \item[(a)] If $|\U|=k$, let $\Ga$ be the singleton set $\{\U\}$.
 \item[(c)] If $|\U|<k$, let $\Ga$ be the set of all $k$-subsets of $\V$ containing $\U$.
\end{enumerate}
}
\end{example}

As noted in Subsection~\ref{rem1}, these are the completely regular codes of 
`strength zero' classified by Meyerowitz~\cite{Meyerowitz1,Meyerowitz2}.
We examine neighbour\-transitive subgroups of automorphisms of these designs. 
For $\U\subseteq\V$, we write $\ov\U:=\V\setminus\U$, and 
we denote the setwise stabiliser of $\U$ in $\Sym(\V)$ by $\Stab(\U)$.
For a positive integer $k$, a group of permutations of a set $\U$ is said 
to be \emph{$k$-homogeneous} on $\U$
if it is transitive on the $k$-subsets of $\U$.

\begin{lemma}\label{lem-intrans}
Let $\U, k, \Ga$ be as in Example~{\rm\ref{ex-intrans}}. Then $\Ga$ 
is neighbour-transitive, $\Aut(\Ga)\cap\Sym(\V)=\Stab(\U)$ is intransitive on $\V$,
%and $\Aut(\Ga)\leq\Sym(\V)$ unless we are in case (b) with $k=v/2$.  
Also, if $|\U|\ne k$ then  $\de(\Ga)=1$.  Moreover,
if $G\leq\Aut(\Ga)\cap\Sym(\V)$, then $\Ga$ is $G$-neighbour-transitive 
if and only if  $\Ga$ is $G$-incidence-transitive, 
if and only if  
\begin{enumerate}
 \item[(a)] $|\U|>k$, and $G$ is transitive on both $\binom{\U}{k}$ and $\binom{\U}{k-1}\times\ov\U$; or
 \item[(b)] $|\U|=k$, and $G$ is transitive on $\U\times\ov\U$; or
\item[(c)] $|\U|<k$, and $G$ is transitive on both $\binom{\ov\U}{k-|\U|}$ and  
$\U\times \binom{\ov\U}{k-|\U|+1}$.
%transitive on $\U\times(\ov\U\setminus\ga)$. $G$ is $(k-|\U|)$-homogeneous on $\ov\U$ and
\end{enumerate}
Further, $\Ga$ is $G$-strongly incidence transitive if and only if $G, k$ are as in part (b). 
\end{lemma}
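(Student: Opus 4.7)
The plan is to dispatch the automorphism-group identification, the description of $\Ga_1$ and $\delta(\Ga)$, and then the two transitivity equivalences in turn.

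First, I would identify $\Aut(\Ga)\cap\Sym(\V)=\Stab(\U)$ by observing that $\U$ is recoverable from $\Ga$: $\U=\bigcup_{\ga\in\Ga}\ga$ in case~(a), $\U$ is the unique element of $\Ga$ in case~(b), and $\U=\bigcap_{\ga\in\Ga}\ga$ in case~(c). Hence any element of $\Sym(\V)$ preserving $\Ga$ preserves $\U$ (and $\ov\U$); conversely $\Stab(\U)$ clearly stabilises $\Ga$. Since $\Stab(\U)\leq\Sym(\U)\times\Sym(\ov\U)$, this subgroup is intransitive on $\V$.

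Second, I would describe $\Ga_1$ by direct inspection: a neighbour is $(\ga\setminus\{u\})\cup\{w\}$ with $u\in\ga$ and $w\in\V\setminus\ga$, retaining only those for which the result is not itself a codeword. This yields the $\Stab(\U)$-equivariant bijections $\Ga_1\leftrightarrow\binom{\U}{k-1}\times\ov\U$ in case~(a), $\Ga_1\leftrightarrow\U\times\ov\U$ in case~(b), and $\Ga_1\leftrightarrow\U\times\binom{\ov\U}{k-|\U|+1}$ in case~(c). For $|\U|\neq k$, two codewords differing by a single point (inside $\U$ in case~(a); inside $\ov\U$ in case~(c)) witness $\delta(\Ga)=1$.

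For the central chain of equivalences, $G$-incidence-transitivity implies $G$-neighbour-transitivity immediately by projecting the incidence orbit onto $\Ga$ and onto $\Ga_1$. The equivalence of $G$-neighbour-transitivity with the conditions (a)/(b)/(c) is then direct from the bijections above, since $G$-transitivity on $\Ga$ and on $\Ga_1$ translates term by term into the stated product-transitivity conditions. The substantive work is showing that each of these conditions implies $G$-incidence-transitivity, which I would handle case by case: in (b), incidences are themselves parameterised by $\U\times\ov\U$, so there is nothing to prove; in (a) and (c), I would take two incidence triples, use $G$-transitivity on the neighbour parameter set to align the $(k-1)$-subset and the extra $\ov\U$-point (respectively, the $\U$-point and the $(k-|\U|+1)$-subset of $\ov\U$), and then use codeword-transitivity together with the product decomposition $G\leq\Sym(\U)\times\Sym(\ov\U)$ to align the remaining coordinate inside a single codeword fiber.

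For the strong-incidence-transitivity claim, the key observation is that $G_\ga$ respects the decomposition $\ov\ga=(\U\setminus\ga)\sqcup\ov\U$ in case~(a) and $\ga=\U\sqcup(\ga\setminus\U)$ in case~(c); in both situations this is a non-trivial partition into two non-empty $G_\ga$-invariant pieces, which prevents $G_\ga$ from acting transitively on $\ga\times\ov\ga$. Hence strong incidence-transitivity can occur only in case~(b), where $\ga=\U$ and $\ov\ga=\ov\U$, and there it reduces precisely to $G$-transitivity on $\U\times\ov\U$, namely condition~(b). I expect the main obstacle to be the upgrade from neighbour- to incidence-transitivity in cases~(a) and~(c), where the subdirect-product structure of $G$ inside $\Sym(\U)\times\Sym(\ov\U)$ must be exploited to align the third coordinate after the first two have been matched.
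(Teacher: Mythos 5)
Most of your proposal is sound and tracks the paper's (very terse) argument: the identification $\Aut(\Ga)\cap\Sym(\V)=\Stab(\U)$ (your recovery of $\U$ as $\bigcup_{\ga\in\Ga}\ga$, as the unique codeword, or as $\bigcap_{\ga\in\Ga}\ga$ is a cleaner justification than the paper supplies), the parametrisations of $\Ga_1$, the claim $\de(\Ga)=1$ for $|\U|\ne k$, the translation of neighbour-transitivity into conditions (a)--(c), and the exclusion of strong incidence-transitivity outside case (b). For the last point you argue via the nontrivial $G_\ga$-invariant splittings $\ov\ga=(\U\setminus\ga)\sqcup\ov\U$ and $\ga=\U\sqcup(\ga\setminus\U)$, whereas the paper invokes $\de(\Ga)=1$ together with Lemma~\ref{str-flagtra}; both routes are valid.

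The gap is exactly where you predicted the substantive work to be: the claim that (a) (resp.\ (c)) implies $G$-incidence-transitivity. Your alignment argument does not close. After using transitivity on $\binom{\U}{k-1}\times\ov\U$ to carry both incidences onto a common neighbour $\ga_1$ with $\ga_1\cap\U=\sigma$ and $\ga_1\cap\ov\U=\{\w\}$, you must map $\ga=\sigma\cup\{\u\}$ to $\ga'=\sigma\cup\{\u'\}$ by an element of $G_{\ga_1}=G_{\sigma}\cap G_{\w}$; that is, you need $G_{\sigma,\w}$ transitive on $\U\setminus\sigma$. Transitivity on $\Ga=\binom{\U}{k}$ only hands you an element mapping $\ga$ to $\ga'$ with no control over its effect on $\sigma$ or $\w$, and the containment $G\leq\Sym(\U)\times\Sym(\ov\U)$ cannot rescue this, since the obstruction lives entirely inside $\Sym(\U)$. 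In fact no argument can close this step, because the asserted equivalence is false for general subgroups $G$: take $\V=\F_7\cup\{\w_0\}$, $\U=\F_7$, $k=2$, and $G$ the Frobenius group of order $21$ inside $\AGL(1,7)$, acting on $\U$ and fixing $\w_0$. This $G$ is $2$-homogeneous but not $2$-transitive on $\U$, so it is transitive on $\binom{\U}{2}$ and on $\binom{\U}{1}\times\ov\U$; hence condition (a) holds and $\Ga=\binom{\U}{2}$ is $G$-neighbour-transitive. But there are $42$ incidences and $|G|=21$, so $\Ga$ is not $G$-incidence-transitive (here incidence-transitivity amounts to $2$-transitivity of $G$ on $\U$). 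The paper's own proof simply asserts that the conditions in (a) are "necessary and sufficient \dots\ to be incidence-transitive" without argument, so you have correctly isolated the one non-routine step --- but the honest conclusion is that the equivalence with incidence-transitivity must be dropped or the hypotheses strengthened, not that a cleverer alignment will prove it. Everything else in the lemma, and everything the paper later uses from it, concerns the characterisation of neighbour-transitivity by (a)--(c), which your proposal does establish correctly.
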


We remark that, in case (c) when $k\leq (v+1)/2$, transitivity on
$\binom{\ov\U}{k-|\U|}$ follows from transitivity on $\U\times \binom{\ov\U}{k-|\U|+1}$
(see  \cite[Theorem 9.4A(ii)]{DM}).

\begin{proof} It follows from the definition of $\Ga$ that $\Aut(\Ga)\cap\Sym(\V)=\Stab(\U)$ with 
orbits $\U$ and $\ov\U$ in $\V$, and also, if $k\ne v/2$ 
%then $\Aut(\Ga)\leq\Sym(\V)$, while if $k=v/2$ $|\U|\ne k$, 
that  $\de(\Ga)=1$.  Note 
that the neighbour set  $\Ga_1$ consists of all $k$-subsets $\ga_1$ of $\V$ such that 
$|\ga_1\cap\U|=k-1$ if $|\U|\geq k$, or  $|\ga_1\cap\U|=|\U|-1$ if $|\U|< k$.  Let 
$G\leq\Aut(\Ga)\cap\Sym(\V)$ and $\ga_1\in\Ga_1$. If  $|\U|= k$, then the criterion for 
neighbour-transitivity in (b) is clearly necessary and sufficient, and moreover it is 
equivalent to $G$-strong incidence transitivity.

Suppose next that  $|\U|> k$. Then $G$ is transitive on $\Ga$ if and only if  
$G$ is $k$-homogeneous on $\U$. Here $|\ga_1\cap\U|=k-1$, so $G$ is transitive 
on $\Ga_1$ if and only if $G$ is  $(k-1)$-homogeneous on $\U$ and $G_{\ga_1\cap\U}$ 
is transitive on $\ov\U$. Thus the conditions given in (a) are necessary and sufficient 
for $G$ to be neighbour-transitive, and to be incidence-transitive. Here  $\delta(\Gamma)=1$, 
and hence by Lemma~\ref{str-flagtra}, $\Ga$ is not strongly 
incidence-transitive.

Suppose finally that  $|\U|< k$. Then $G$ is transitive on $\Ga$ if and only if  
$G$ is  $(k-|\U|)$-homogeneous on $\ov\U$. Here $|\ga_1\cap\U|=|\U|-1$, so $G$ 
is transitive on $\Ga_1$ if and only if $G$ is transitive on $\U$ and $G_{\ga_1\cap\U}$ 
(which is the stabiliser of the unique point of $\U$ not in $\ga_1$) is  $(k-|\U|+1)$-homogeneous 
on $\ov\U$; this is equivalent to  $G$ being transitive on  $\U\times \binom{\ov\U}{k-|\U|+1}$. 
% Moreover, since $k\leq\frac{v}{2}$, it follows from \cite[Theorem 9.4A(ii)]{DM} that the condition 
% that $G_{\ga_1\cap\U}$ is  $(k-|\U|+1)$-homogeneous on $\ov\U$ implies that  $G_{\ga_1\cap\U}$ is  
% $(k-|\U|)$-homogeneous on $\ov\U$, and hence that $G$ is $(k-|\U|)$-homogeneous on $\ov\U$. 
Thus the condition in (c) is necessary and sufficient for neighbour transitivity, and for 
incidence-transitivity. Again  $\delta(\Gamma)=1$, 
and hence by Lemma~\ref{str-flagtra}, $\Ga$ is not strongly 
incidence-transitive.
\end{proof}

Now we classify the codes $\Ga$ admitting a neighbour-transitive, intransitive group. 

\begin{proposition}\label{intrans}
Suppose that $\Ga\subset \Xv$, where $2\leq k\leq |\V|-2$, and $\Ga$ admits a 
neighbour-transitive subgroup of $\Sym(\V)$ that is intransitive on $\V$. Then
$\Ga$ is as in Example~{\rm\ref{ex-intrans}}.
\end{proposition}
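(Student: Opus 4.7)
My plan is to let $G\le\Sym(\V)$ be the intransitive neighbour-transitive subgroup with $G$-orbits $\U_1,\dots,\U_r$ on $\V$ for some $r\ge 2$, and to exploit the two transitivity properties by tracking the intersection pattern $(a_i):=(|\gamma\cap\U_i|)$. Since $G$ is transitive on $\Gamma$ and on $\Gamma_1$, this tuple is the same for every codeword, and the analogous tuple $(b_i):=(|\gamma_1\cap\U_i|)$ is the same for every neighbour. I would then classify the $k$-subsets adjacent to a fixed $\gamma\in\Gamma$ in $J(v,k)$ by the ordered pair $(i,j)$ of orbits used in the swap $(\gamma\setminus\{u\})\cup\{w\}$ with $u\in\U_i\cap\gamma$ and $w\in\U_j\setminus\gamma$: an intra-orbit swap ($i=j$) preserves the pattern, whereas an inter-orbit swap ($i\ne j$) produces a $k$-subset whose pattern differs from $(a_i)$ and hence lies in $\Gamma_1$, with $(b_\ell)$ obtained from $(a_\ell)$ by decreasing the $i$-th coordinate and increasing the $j$-th coordinate by $1$.

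The first significant step is to force $r=2$. Because distinct ordered pairs $(i,j)$ with $i\ne j$ produce distinct pattern-changes, at most one such pair admits a valid inter-orbit swap. I would first rule out the possibility that no inter-orbit swap exists: that would force, for every pair $i\ne j$, either $a_i=0$ or $a_j=|\U_j|$, and a short case analysis on $|I|$ (where $I=\{i:a_i>0\}$) gives $\gamma=\emptyset$ when $|I|=0$, $\gamma=\V$ when $|I|\ge 2$, and the nonsensical $|\U_j|=0$ when $|I|=1$, so this case is excluded by $2\le k\le v-2$. Hence exactly one inter-orbit pair $(i_0,j_0)$ is valid. For any third index $i\notin\{i_0,j_0\}$, invalidity of $(i_0,i)$ combined with $a_{i_0}>0$ gives $a_i=|\U_i|$, while invalidity of $(i,j_0)$ combined with $a_{j_0}<|\U_{j_0}|$ gives $a_i=0$; together these force $|\U_i|=0$, which is absurd. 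So no such $i$ exists and $r=2$.

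Setting $\U:=\U_{i_0}$, $\ov\U:=\U_{j_0}=\V\setminus\U$ and $a:=a_{i_0}$, invalidity of the reverse pair $(j_0,i_0)$ forces $k-a=0$ (so $\gamma\subseteq\U$) or $a=|\U|$ (so $\gamma\supseteq\U$). If $|\U|=k$ then $\gamma=\U$ and $\Gamma=\{\U\}$, giving Example~\ref{ex-intrans}(b). Otherwise the intra-orbit swap within the ``free'' orbit (intra-$\U$ when $\gamma\subsetneq\U$ and $|\U|>k$, intra-$\ov\U$ when $\gamma\supsetneq\U$ and $|\U|<k$) preserves the pattern of $\gamma$, and its output cannot be a neighbour without contradicting the unique pattern $(b_i)$ derived in the previous paragraph; hence the output lies in $\Gamma$. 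Connectivity of $J(|\U|,k)$ (when $|\U|>k$) or $J(|\ov\U|,k-|\U|)$ (when $|\U|<k$), which is guaranteed by $2\le k\le v-2$, then forces $\Gamma=\binom{\U}{k}$ or $\Gamma=\{\alpha\in\binom{\V}{k}:\alpha\supseteq\U\}$, giving Example~\ref{ex-intrans}(a) and (c) respectively.

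I expect the principal obstacle to be the inter-orbit uniqueness analysis in the second paragraph, where one must simultaneously establish that some inter-orbit swap exists and that at most one does, and then combine these with $2\le k\le v-2$ to eliminate all configurations except $r=2$. Once this is in place, the case analysis determining $\gamma$'s relation to $\U$, and the fullness of $\Gamma$ via Johnson-graph connectivity, are routine.
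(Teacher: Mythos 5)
Your proof is correct, and it reaches the same three families as Example~\ref{ex-intrans} by the same underlying mechanism as the paper --- namely that the orbit-intersection pattern of a codeword (resp.\ neighbour) is a $G$-invariant, so single-point swaps that change the pattern all land in $\Gamma_1$ and must therefore all produce one and the same pattern --- but you organise the argument quite differently. The paper splits into two cases according to whether some codeword contains a full $G$-orbit: in the first case it takes $\U$ to be the largest $G$-invariant subset of $\gamma$, notes every codeword contains $\U$, and derives a contradiction from the coexistence of neighbours containing and not containing $\U$; in the second case it runs your swap argument on a \emph{single} orbit coordinate $k'=|\gamma\cap\U|$ to force $\gamma\subseteq\U$. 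You instead work with the full pattern vector from the outset, prove that exactly one ordered pair $(i_0,j_0)$ admits an inter-orbit swap, and deduce as an explicit intermediate conclusion that $G$ has precisely two orbits --- a fact the paper never proves directly and only recovers a posteriori from the classification (via Lemma~\ref{lem-intrans}, where $\Aut(\Ga)\cap\Sym(\V)=\Stab(\U)$). Your route is slightly longer at the front end (the $|I|\in\{0,1,\ge 2\}$ case analysis and the elimination of a third orbit index) but more uniform, handling Examples~\ref{ex-intrans}(a), (b), (c) by one dichotomy ($\gamma\subseteq\U$ versus $\U\subseteq\gamma$) rather than two separate arguments; the paper's split buys a shorter proof of the "$\gamma\supseteq\U$" case at the cost of a less transparent global picture of the orbit structure. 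All the individual steps you outline check out, including the use of $2\le k\le v-2$ to exclude $\gamma\in\{\emptyset,\V\}$ and to guarantee connectivity of the relevant smaller Johnson graphs.
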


\begin{proof}
  Let $G\leq\Aut(\Ga)\cap\Sym(\V)$ be neighbour-transitive on $\Ga$ and intransitive on $\V$. 
Suppose first that some codeword $\ga\in\Ga$ contains a $G$-orbit, and let  $\U$ be the largest 
$G$-invariant subset of $\ga$. Since $G$ is transitive on $\Ga$ it follows that $\U$ is 
contained in each codeword of $\Ga$. Now there exists some neighbour $\ga_1\in\Ga_1$ that 
does not contain $\U$. If $\Ga$ did not contain every $k$-subset containing $\U$, then 
there would also be a neighbour $\ga_1'\in\Ga_1$  containing $\U$. However, since $G$ 
fixes $\U$ setwise, no element of $G$ can map $\ga_1$ to $\ga_1'$, which is a contradiction. 
Hence $\Ga$ consists of all $k$-subsets that contain $\U$, as in Example~\ref{ex-intrans}.

Thus we may assume that a codeword $\ga\in\Ga$ does not contain any $G$-orbit in $\V$. 
Let $\U$ be a $G$-orbit that meets $\ga$, and let $\u\in\ga\cap\U$ and $\u'\in\U\setminus
\ga$. Set $k':=|\ga\cap\U|$. Since $G$ is transitive on $\Ga$ it follows that every codeword 
meets $\U$ in $k'$ points. Since $\ga$ contains no $G$-orbit it follows that there exists a 
point $\v\in\ov\U\setminus\ga$. The $k$-subset $\ga_1:=(\ga\setminus\{\u\})\cup\{\v\}$ meets 
$\U$ in $k'-1$ points and hence does not lie in $\Ga$. Since $d(\ga,\ga_1)=1$ it follows that 
$\ga_1\in\Ga_1$, and since $G$ is transitive on $\Ga_1$, all neighbours must meet $\U$ in 
$k'-1$ points. Suppose that $k'<k$ so that there exists $\v'\in\ga\setminus\U$. Then the 
$k$-subset $\ga':=(\ga\setminus\{\v'\})\cup\{\u'\}$ meets $\U$ in $k'+1$ points and hence 
does not lie in either $\Ga$ or $\Ga_1$, contradicting the fact that $d(\ga,\ga')=1$. 
Hence $k'=k$, that is to say, $\ga\subset\U$. Thus each codeword is contained in $\U$ 
and each neighbour meets $\U$ in $k-1$ points. Now each $k$-subset $\gamma'\subset\U$ 
is connected to $\gamma$ by a path in the Johnson graph $J(|\U|,k)$ based on $\U$, with 
each vertex on the path a $k$-subset of $\U$. It follows that each vertex on the path 
is a codeword, and in particular $\gamma'\in\Ga$. Hence $\Ga$ consists of all $k$-subsets 
of $\U$, as in Example~\ref{ex-intrans}.
\end{proof}

\section{The imprimitive neighbour-transitive codes}\label{sect:imprim}

Additional natural families of neighbour-transitive codes are based 
on a partition $\U=\{U_1|U_2|\dots|U_b\}$ of the underlying set $\V$, with $b$ 
equal-sized parts $U_i$ of size $a$, where $v=ab, a>1, b>1$. 
We introduce the notion of the \emph{$\U$-type}  $\t_\U(\ga)$ of a subset $\ga$ of $\V$ to
describe how $\ga$ intersects the various parts of $\U$, namely $\t_\U(\ga)$ is 
the multiset $\{1^{m_1},2^{m_2},\dots,a^{m_a}\}$ of size $b=\sum_{i=1}^a m_i$, where 
$\sum_{i=1}^aim_i=|\ga|$, each $m_i\geq0$ and exactly $m_i$ of the 
intersections $\ga\cap U_1,\dots,\ga\cap U_b$ 
have size $i$. If some $m_i=0$, we usually omit the entry $i^{m_i}$ from $\t_\U(\ga)$. 
For example, if $\ga\subset U_b$ and $|\ga|=k$, we write $\t_\U(\ga)$ as $\{k\}$. 

We give two constructions in Subsection~\ref{sub-imprimex} for 
codes admitting a group of automorphisms 
that is both neighbour-transitive on the code, and also 
transitive and imprimitive on $\V$. 
We prove in Subsection~\ref{sub-imprimclass} 
that all such codes arise in one of these ways.

\subsection{Constructions of imprimitive codes}\label{sub-imprimex}
Several families of codes are defined as the sets of all $k$-subsets
of certain $\U$-types.

\begin{example}\label{ex:imprim}{\rm
Let  $\U=\{U_1|U_2|\dots|U_b\}$ be a partition of the $v$-set $\V$ with $b$ 
parts of size $a$, where $v=ab, a>1, b>1$, and let $2\leq k\leq v-2$.
For $\t$ as in one of the lines of Table~\ref{tbl-utype},  
let $\Ga(a,b;\t)$ consist of all $k$-subsets $\ga$ of $\V$ with $\t_\U(\ga)=\t$.
} 
\end{example}

 \begin{center}
\begin{table}
% use packages: array
\begin{tabular}{clll}\hline
Line &$\t$  &Conditions    &$\mathcal{U}$-Type for $\Ga_1$\\ \hline 
1 &$\{k\}$ &$k\leq a$   &$\{1,k-1\}$ \\ 
2 &$\{k-v+a,a^{b-1}\}$&$k\geq v-a$ &$\{k-v+a+1,a-1,a^{b-2}\}$ \\
3 &$\{1^k\}$ &$k\leq b$ &$\{1^{k-2},2\}$  \\
4 &$\{(a-1)^{v-k},a^{b-v+k}\}$& $k\geq v-b$& $\{a-2,(a-1)^{v-k-2},a^{b-v+k+1}\}$\\ 
5 &$\{c^b\}$  &$k=cb$, with  & $\{c-1,c^{b-2},c+1\}$   \\
  &           & $1<c\leq a-1$&                         \\  
6 &$\{\frac{k-1}{2},\frac{k+1}{2}\}$  &$k$ odd, with &$\{\frac{k-3}{2},\frac{k+3}{2}\}$ \\ 
  &                                   & $a\geq3$, $b=2$  &    \\
7 &$\{1,2^{(k-1)/2}\}$  &$k$ odd, with & $\{1^3,2^{(k-3)/2}\}$     \\ 
  &                     & $a=2$, $b\geq3$ & \\  \hline
&&
 \end{tabular}
\caption{$\mathcal{U}$-Types for $\Ga=\Ga(a,b;\t)$ and $\Ga_1$ in Example~\ref{ex:imprim}.}\label{tbl-utype}
 \end{table}
 \end{center}

\begin{remark}\label{rem:imprim}{\rm
Some, but not all, of the neighbour-transitive codes in Example~\ref{ex:imprim} 
are completely regular. A discussion is given in \cite[Section 2]{Martin94}.  
Moreover the codes in Line 5 of Table~\ref{tbl-utype} are completely transitive, 
not just neighbour-transitive. Also those in Line 1 of Table~\ref{tbl-utype} are 
completely transitive if either $b=2$ or $k=3$. These were  discovered as completely 
transitive codes in \cite{GP} (see also \cite[page 181]{Martin94}). 
} 
\end{remark}

We note that, because of the conditions given in Table~\ref{tbl-utype}, 
no code arises from more than one line. We denote the stabiliser 
in $\Sym(\V)$ of the partition $\U$ by $\Stab(\U)\cong S_a\wr S_b$. 
It is transitive and imprimitive on $\V$, since $a>1, b>1$.
We prove that each of these codes $\Ga$ is incidence-transitive and hence, in particular, is neighbour-transitive.

\begin{lemma}\label{lem-imprim}
Let $\U, k, \t, \Ga=\Ga(a,b;\t)$, be as in Example~{\rm\ref{ex:imprim}}. Then $\Ga$ 
is $G$-incidence-transitive, where $G=\Aut(\Ga)\cap\Sym(\V)=\Stab(\U)$ is transitive 
and imprimitive on $\V$. 
Also  $\de(\Ga)=1$, except for Line~$1$ of 
Table~{\rm\ref{tbl-utype}} with $k=a$, and in this exceptional case $\de(\Ga)=k$.  
\end{lemma}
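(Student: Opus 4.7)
The plan is to first establish both containments in the equality $G = \Aut(\Gamma) \cap \Sym(\V) = \Stab(\U)$. Every element of $\Stab(\U) \cong S_a \wr S_b$ permutes the parts of $\U$ and hence preserves the $\U$-type of every subset of $\V$, so $\Stab(\U) \subseteq \Aut(\Gamma) \cap \Sym(\V)$; since $a, b \geq 2$, this group is transitive on $\V$ with the $U_i$ as a non-trivial block system, hence imprimitive. For the reverse inclusion, I would argue line by line through Table~\ref{tbl-utype} that the partition $\U$ is canonically recoverable from $\Gamma$: in Lines~1, 2 (up to complementation for Line~2) the parts are the equivalence classes of the relation ``$u, w$ lie in a common codeword''; in Lines~3, 4, 7 (up to complementation for Line~4) they are the equivalence classes of ``no codeword contains both $u$ and $w$''; and in Lines~5, 6 the number of codewords containing a pair $\{u, w\}$ takes distinct values on within-part and between-part pairs, as one checks using the numerical conditions in the third column of the table.

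For $G$-transitivity on $\Gamma$, given $\gamma, \gamma' \in \Gamma$ of the same $\U$-type $\t$, one applies an element of the top $S_b$ factor of $\Stab(\U)$ to align the parts by their intersection sizes with $\gamma$ and $\gamma'$, then adjusts within each part using the base $S_a$ factors. For a codeword $\gamma$, any vertex adjacent to $\gamma$ has the form $\gamma_1 = (\gamma \setminus \{u\}) \cup \{w\}$, and $\t_\U(\gamma_1)$ is determined solely by $|\gamma \cap U_i|$ and $|\gamma \cap U_j|$, where $u \in U_i$, $w \in U_j$. A case-by-case check shows that either $\t_\U(\gamma_1) = \t$ (and then $\gamma_1 \in \Gamma$) or else $\t_\U(\gamma_1)$ equals the unique type listed in the fourth column of Table~\ref{tbl-utype}; the numerical conditions in the third column are precisely what is needed to force this dichotomy. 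Finally, $G_\gamma = \Stab(\U)_\gamma$ permutes the parts with equal $\gamma$-intersection size transitively and acts as $\Sym(\gamma \cap U_i) \times \Sym(U_i \setminus \gamma)$ inside each part, so it is transitive on the set of pairs $(u, w)$ parametrising the neighbours of $\gamma$. Combined with transitivity on $\Gamma$, this yields $G$-incidence-transitivity.

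For the minimum distance, a ``within-type'' swap $u \mapsto w$ with $u \in \gamma \cap U_i$ and $w \in U_i \setminus \gamma$ produces a second codeword at distance $1$ whenever some part $U_i$ satisfies $0 < |\gamma \cap U_i| < a$; this is the case in every line except Line~1 with $k = a$, giving $\delta(\Gamma) = 1$ outside the exception. In the exceptional case, each codeword is exactly a part $U_i$, so distinct codewords are disjoint $k$-subsets and $\delta(\Gamma) = k$. The main obstacle will be the line-by-line bookkeeping: verifying simultaneously in each of the seven lines that $\U$ is recoverable, that non-codeword neighbours have exactly one $\U$-type, and that $G_\gamma$ acts transitively on them — all three properties depending on the finely tuned numerical constraints in the third column of Table~\ref{tbl-utype} (for example, $b = 2$ in Line~6 or $a = 2$ in Line~7, which eliminate competing neighbour $\U$-types that would otherwise split $\Gamma_1$ into several $G_\gamma$-orbits).
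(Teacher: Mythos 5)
Your proposal follows essentially the same route as the paper's proof, which is little more than a sketch: the paper asserts that $S=\Aut(\Ga)\cap\Sym(\V)$, that $S$ is transitive on $\Ga$, and that $\Ga_1$ consists of all $k$-sets of the $\U$-type in the last column of Table~\ref{tbl-utype}, each time with the phrase ``it is not difficult to prove/verify''. What you have written is a correct expansion of exactly those three deferred verifications, plus the minimum-distance observation, and your identification of the real content (that $G_\ga$ must be transitive on the set of pairs $(\u,\w)$ whose swap \emph{changes} the $\U$-type, and that the numerical side conditions are what force the neighbour types to form a single column-4 class) is the right one.

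One step would fail as stated: your recipe for recovering $\U$ from $\Ga$ in Line~7. There $a=2$ and $k$ is odd with $k\geq3$, so every codeword contains $(k-1)/2\geq1$ whole parts; hence a same-part pair $\{\u,\w\}=U_i$ does lie in a common codeword (take $U_i$ as one of the full parts), and a different-part pair does too, so the relation ``no codeword contains both $\u$ and $\w$'' is empty and does not return the partition. The repair is the tool you already deploy for Lines~5 and~6: the number of codewords containing $\{\u,\w\}$ is $2(b-1)\binom{b-1}{(k-3)/2}/\binom{b-2}{(k-3)/2}\cdot(\cdots)$-type expressions that one checks take different values on same-part and different-part pairs (e.g.\ for $k=3$ the counts are $2(b-1)$ versus $2$, distinct since $b\geq3$), so $\U$ is still canonically determined by $\Ga$ and the inclusion $\Aut(\Ga)\cap\Sym(\V)\subseteq\Stab(\U)$ goes through. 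A smaller point you share with the lemma itself: in Line~2 the boundary case $k=v-a$ makes $\ga$ a union of $b-1$ parts (the complement of the Line~1 exception), where no within-part swap is available and $\de(\Ga)=a$ rather than $1$; your criterion ``some part has $0<|\ga\cap U_i|<a$'' correctly detects this, so you should either exclude $k=v-a$ from Line~2 or record it alongside the stated exception.
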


\begin{proof} 
Set $S=\Stab(\U)$, let $\t$ be as in one of the lines of Table~\ref{tbl-utype},
and let $\Ga=\Ga(a,b;\t)$. Clearly $\de(\Ga)=1$ unless we are in Line 1 with $k=a$, 
and in this case $\Ga=\U$ and $\de(\Ga)=k$.
In all cases it is not difficult to prove that $S=\Aut(\Ga)\cap\Sym(\V)$, that is $S=G$,
and that $S$ is transitive on $\Ga$.
Also it is not difficult to verify that in each case $\Gamma_1$ consists of all
$k$-sets with $\mathcal{U}$-type as in Table~\ref{tbl-utype}. It then 
follows that $\Ga$ is $G$-incidence transitive.
% 
% 
% Finally, we specify the neighbour set $\Ga_1$ in each case, and it is then easy
% to verify that $G$ is incidence-transitive on $\Ga$.
% For Line 1 of Table~\ref{tbl-utype}, the neighbour set $\Ga_1$  
% consists of all $k$-subsets of $\mathcal{U}$-type $\{1,k-1\}$. 
% For Line 2, $\Ga_1$  is the set of all $k$-subsets of 
% $\mathcal{U}$-type $\{1^{k-2},2\}$.  For Line 3,  
% since $k\leq v/2$, we have $c<a$, and so $\Ga_1$ 
% is the set of all $k$-subsets of $\mathcal{U}$-type 
% $\{c-1,c^{b-2},c+1\}$.
% For Line 4, since $3\leq k\leq v/2=a$, we have $\frac{k+3}{2}\leq k\leq a$. 
% Thus $\Ga_1$ is the set of all $k$-subsets of $\mathcal{U}$-type 
% $\{\frac{k-3}{2},\frac{k+3}{2}\}$. For Line 5, since $3\leq k\leq v/2=b$, 
% we have $\frac{k+3}{2}\leq b$, and $\Ga_1$ is the set of all $k$-subsets of 
% $\mathcal{U}$-type 
% $\{1^3,2^{(k-3)/2}\}$
\end{proof}

The second set of examples involves a code 
$\Ga_0$  in a smaller Johnson graph $J(b,k_0)$ based on a partition $\mathcal{U}$ of $\V$. 
We include the case $k_0=1$ for a uniform description and note that in this case the code in 
Example~\ref{ex:imprim2} also occurs in Example~\ref{ex:imprim}, namely in Line 1 
of Table~\ref{tbl-utype} with $k=a$. This is the only overlap between the two families of codes.

\begin{example}\label{ex:imprim2}
Let  $\U=\{U_1|U_2|\dots|U_b\}$ be a partition of the $v$-set $\V$ with $b$ 
parts of size $a$, where $v=ab, a>1, b\geq4$, and let 
$k=ak_0$ where $1\leq k_0\leq b-1$.
For $\Ga_0\subseteq \binom{\U}{k_0}$, let $\Ga(a,\Ga_0)$ be the 
set of all $k$-subsets of $\V$ of the form $\cup_{U\in\ga_0}U$,
for some $\ga_0\in\Ga_0$.
\end{example}

\begin{remark}\label{rem:groupwise}{\rm
These codes were studied by Martin \cite{Martin94} in the special case where 
$\Ga_0=\binom{\U}{k_0}$, and he called them \emph{groupwise complete designs}. 
It follows from  Lemma~\ref{lem:imprimex2} that essentially all the strongly 
incidence-transitive codes $\Ga(a,\Ga_0)$ arising from Example~\ref{ex:imprim2} 
with $\de(\Ga_0)=1$ are groupwise complete designs.
Martin~\cite[Theorem 2.1]{Martin94} determined precisely which groupwise complete 
designs are completely regular codes.  He proved further in \cite[Theorem 3.1]{Martin94} 
that, if $\Ga$ is completely regular with minimum distance at least 2, and if 
$\Ga$ is a 1-design but not a 2-design\footnote{That is, each point of $\V$ 
lies in a constant number of codewords ($k$-subsets) in $\Ga$, but some point 
pairs  lie in different numbers of codewords.}, then $\Ga$ is a groupwise complete 
design. Thus it follows from Lemma~\ref{lem:imprimex2} that most of the 
neighbour-transitive codes in Example~\ref{ex:imprim2} are {\it not} completely regular.
}
\end{remark}

\begin{lemma}\label{lem:imprimex2}
For $\Ga=\Ga(a,\Ga_0)$ as in Example~{\rm\ref{ex:imprim2}},
$\Aut(\Ga)$ contains $S_a\wr A$, where $A=\Aut(\Ga_0)\cap\Sym(\U)$, and $\de(\Ga)=a \de(\Ga_0)$.

\begin{enumerate}
\item[(a)] Moreover,  if $\Ga_0$ is $A$-strongly incidence-transitive, 
then  $\Ga$ is $(S_a\wr A)$-strongly incidence-transitive, and 
either  $\Ga_0=\binom{\U}{k_0}$ or $\de(\Ga_0)\geq2$.
\item[(b)]  Conversely, if $S_a\wr A$ is neighbour-transitive on $\Ga$, then 
either $\Ga_0$ is $A$-strongly incidence-transitive, or $a=2$ and $\de(\Ga_0)=1$. 
\end{enumerate}
\end{lemma}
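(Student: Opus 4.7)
The plan is to work with the bijection $\ga = \cup_{U\in\ga_0}U \leftrightarrow \ga_0$ between $\Ga$ and $\Ga_0$, which is equivariant with respect to $S_a\wr A$ acting on $\V$ and $A$ acting on $\U$. That $S_a\wr A \leq \Aut(\Ga)$ is immediate: the base group $S_a^b$ fixes every $U_i$ setwise and hence every codeword, while the top group $A$ permutes the $U_i$'s preserving $\Ga_0$. For the minimum distance, observe that $\ga\cap\ga' = \cup_{U\in\ga_0\cap\ga_0'}U$ has size $a|\ga_0\cap\ga_0'|$, so $d_{J(v,k)}(\ga,\ga') = a\cdot d_{J(b,k_0)}(\ga_0,\ga_0')$ and thus $\de(\Ga) = a\,\de(\Ga_0)$.

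For part (a), assume $\Ga_0$ is $A$-strongly incidence-transitive. The stabiliser of a codeword is $(S_a\wr A)_\ga = S_a^b \rtimes A_{\ga_0}$, since any element fixing $\ga$ must permute the $k_0$ parts contained in $\ga_0$ among themselves. Given any $(u,w)\in\ga\times\ov\ga$ with $u\in U_i\in\ga_0$ and $w\in U_j\in \U\setminus\ga_0$, strong incidence-transitivity of $\Ga_0$ yields $\sigma\in A_{\ga_0}$ moving $(U_i,U_j)$ to any chosen target pair, after which a suitable base-group element refines the image of $(u,w)$ to any desired point pair inside the target parts. Hence $(S_a\wr A)_\ga$ is transitive on $\ga\times\ov\ga$, establishing $(S_a\wr A)$-strong incidence-transitivity of $\Ga$. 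If moreover $\Ga_0\neq\binom{\U}{k_0}$, then $\Ga_0$ is a proper code in $J(b,k_0)$, and Lemma~\ref{str-flagtra} applied to $\Ga_0$ yields $\de(\Ga_0)\geq 2$.

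For part (b), I would first describe $\Ga_1$ through the $\U$-type. Any neighbour $\ga_1$ arises from some $\ga\in\Ga$ by a swap: remove $u\in U_i\in\ga_0$ and add $w\in U_j\in\U\setminus\ga_0$, so $\t_\U(\ga_1) = \{1,\,a-1,\,a^{k_0-1}\}$. The crucial step is to count codewords at distance $1$ from $\ga_1$: restoring all part-intersection sizes to $\{0,a\}$ requires a single swap that empties a size-$1$ part into a size-$(a-1)$ part, and when $a\geq 3$ these are distinguishable, yielding a unique closest codeword. When $a=2$ the type collapses to $\{1^2,2^{k_0-1}\}$ and the two singleton parts play symmetric roles, so the candidate codewords correspond to a pair of $k_0$-subsets of $\U$ at distance $1$ in $J(b,k_0)$, and $\ga_1$ has either one or two closest codewords in $\Ga$ according as one or both of these candidates lie in $\Ga_0$. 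In particular, outside the exceptional case ($a=2$ with $\de(\Ga_0)=1$) every neighbour has a unique closest codeword, and the $(S_a\wr A)$-equivariant assignment $\ga_1\mapsto\ga$ upgrades neighbour-transitivity to transitivity on incidences. Restricting to the stabiliser of $\ga$ then forces $(S_a\wr A)_\ga = S_a^b\rtimes A_{\ga_0}$ to be transitive on the parameterising tuples $(U_i,u,U_j,w)$; since $S_a^b$ only permutes within parts, $A_{\ga_0}$ must be transitive on $\ga_0\times(\U\setminus\ga_0)$, which is exactly $A$-strong incidence-transitivity of $\Ga_0$.

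The main obstacle is precisely the $a=2$ degeneracy: once the $\U$-type no longer distinguishes the removed from the added part, a single neighbour can sit above two codewords of $\Ga$ forming an edge of $J(b,k_0)$ inside $\Ga_0$, and this double-cover phenomenon can fuse what would otherwise be two incidence orbits of $S_a\wr A$ into a single neighbour orbit. This is exactly why the exceptional clause ``$a=2$ and $\de(\Ga_0)=1$'' is needed in (b), and correspondingly why (a) must allow both $\Ga_0=\binom{\U}{k_0}$ and $\de(\Ga_0)\geq 2$.
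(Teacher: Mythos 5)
Your proof is correct. The preliminary claims and part (a) follow essentially the paper's route; for the dichotomy in (a) the paper argues the contrapositive directly (if $\de(\Ga_0)=1$ then strong incidence-transitivity forces every $k_0$-set adjacent to a codeword of $\Ga_0$ into $\Ga_0$, whence $\Ga_0=\binom{\U}{k_0}$ by connectivity), which is the same argument as your appeal to Lemma~\ref{str-flagtra}. Part (b) is where you genuinely diverge. The paper fixes a codeword $\ga$, takes two neighbours $(\ga\setminus\{\u\})\cup\{\w\}$ and $(\ga\setminus\{\u'\})\cup\{\w'\}$, and chases the element $g$ carrying one to the other: it shows $g$ sends $\{U,W\}$ to $\{U',W'\}$, handles $a\geq3$ by noting that the intersection sizes force $U^g=U'$, and then runs a separate, more delicate argument for $a=2$ with $\de(\Ga_0)\geq2$ (ruling out $(U,W)^g=(W',U')$, then specialising $W'=W$ and $U'=U$ in turn to get transitivity on $\ga_0$ and on $\U\setminus\ga_0$). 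You instead show that outside the exceptional case every neighbour has a \emph{unique} codeword at distance $1$, so the incidence set is in $(S_a\wr A)$-equivariant bijection with $\Ga_1$ and neighbour-transitivity upgrades to incidence-transitivity in one stroke; this treats $a\geq3$ and $a=2$, $\de(\Ga_0)\geq2$ uniformly, and it isolates exactly why the exceptional clause is what it is. It is the same mechanism the paper uses in the proof of Theorem~\ref{flagtra}(b), with your uniqueness of the nearest codeword playing the role that $\de(\Ga)\geq3$ plays there. The only point to make explicit is that transitivity of $A$ on $\Ga_0$ (the other half of strong incidence-transitivity) comes from transitivity of $S_a\wr A$ on $\Ga$ via the equivariant bijection you set up at the outset; this is immediate but should be stated when you conclude.
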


\begin{proof} It follows from the definition of $\Ga$ that 
$\de(\Ga)=a \de(\Ga_0)$. Let $G:=S_a\wr A$.
Then, by definition, $\Ga$ is a $G$-invariant subset of 
$\binom{\V}{k}$, so $G\leq\Aut(\Ga)$.
Next suppose that  $\Ga_0$ is $A$-strongly incidence-transitive. 
Then $G$ is transitive on $\Ga$. Let $\ga_0\in\Ga_0$ and $\ga=\cup_{U\in\ga_0}U$. 
Then $G_\ga=S_a\wr A_{\ga_0}$, and since  $A_{\ga_0}$
is transitive on $\ga_0\times(\U\setminus\ga_0)$, then also 
$G_\ga$ is transitive on  $\ga\times(\V\setminus\ga)$. 
Thus  $\Ga$ is $G$-strongly incidence-transitive.
Suppose that $\de(\Ga_0)=1$. Then we may choose $\ga_0$ such that some adjacent $k_0$-subset 
$(\ga_0\setminus\{U\})\cup\{W\}\in\Ga_0$. Since  $\Ga_0$ is $A$-strongly 
incidence-transitive it follows 
that each $k_0$-subset of $\U$ adjacent to $\gamma_0$ in $J(b,k_0)$ also lies in $\Ga_0$. Moreover, 
this property is independent of $\ga_0$ since $A$ is transitive on $\Ga_0$.
It follows that  $\Ga_0=\binom{\U}{k_0}$, and (a) is proved.

Conversely suppose that $G$ is neighbour-transitive on $\Ga$.
For $\ga_0,\ga_0'\in\Ga_0$, let 
$\ga=\cup_{U\in\ga_0}U$ and $\ga'=\cup_{U\in\ga_0'}U$. Then some 
element $h\in G$ maps $\ga$ to $\ga'$, and hence the element of $A$
induced by $h$ maps $\ga_0$ to $\ga_0'$. Thus $A$ is transitive on $\Ga_0$.
Now take $\ga_0=\ga_0'$, let  $U,U'\in\ga_0$ and $W,W'\in\U\setminus\ga_0$ (possibly $U=U'$, and/or
$W=W'$). Let $\u\in U, \u'\in  U',\w\in W, \w'\in W'$, and define 
$\ga_1=\ga_1(\u,\w)=(\ga\setminus\{\u\})\cup\{\w\}$ and 
$\ga_1'=\ga_1'(\u',\w')=(\ga\setminus\{\u'\})\cup\{\w'\}$.
Both are at distance 1 from $\ga$ and hence $\ga_1,\ga_1'\in\Ga_1$  
(recalling that $\delta(\Gamma)\geq a>1$).
Since $G$ is transitive on $\Ga_1$ there exists $g\in G$ such that
$\ga_1^g=\ga_1'$. This element $g$ maps  $\ga\setminus U$ to $\ga\setminus U'$ 
(the unions of $\mathcal{U}$-classes contained in $\ga_1, \ga_1'$ respectively), and maps $\ga\cup W$ to $\ga\cup W'$ (the unions of $\U$-classes meeting $\ga_1,\ga_1'$ respectively).
Hence $g$ maps $\{U,W\}$ to $\{U',W'\}$. 
Suppose first that $a\geq3$. Then $U, U'$ are the only $\U$-classes
such that $|\ga_1\cap U|= |\ga_1'\cap U'|=a-1$,  and hence
$g$ maps $U$ to $U'$, and $W$ to $W'$. Hence 
$g\in G_{\ga_0}$ and we deduce that $A_{\ga_0}$ is transitive on 
$\ga_0\times(\U\setminus\ga_0)$, so  $\Ga_0$ is $A$-strongly incidence-transitive.

Finally suppose that $a=2$ and $\de(\Ga_0)\geq2$. We showed that $g$ maps $\eta:=\ga_0\cup\{W\}$ to $\ga_0\cup\{W'\}$, $\nu:=\ga_0\setminus\{U\}$  to $\ga_0\setminus\{U'\}$, and $\{ U,W\}$ to $\{U',W'\}$.
Suppose that $(U,W)^g=(W',U')$. Then $\ga_0^g\subset \eta^g=\ga_0\cup\{W'\}$ and $U'=W^g\not\in\ga_0^g$, so that $\ga_0^g=(\ga_0\setminus\{U'\})\cup\{W'\}$. This implies that $d(\ga_0,\ga_0^g)=1$ which is a contradiction since $\ga_0^g\in\Ga_0$ and we assumed that $\de(\Ga_0)\geq2$. Thus $(U,W)^g=(U',W')$.

Temporarily take $W'=W$. Then $g$ fixes $\eta$ and hence also $\ga_0$, since $W^g=W'=W$. Since $U, U'$ can be chosen arbitrarily in $\ga_0$, it follows that $A_{\ga_0}$ is transitive on $\ga_0$. Now instead take $U'=U$. Then $g$ fixes $\nu$, and hence also $\ga_0$, since $U^g=U'=U$. Since $W, W'$ can be chosen arbitrarily in $\U\setminus\ga_0$ we conclude that $A_{\ga_0,U}$ is transitive on $\U\setminus\ga_0$. Hence $\Ga_0$ is $A$-strongly incidence-transitive.
\end{proof}

\subsection{Classification of the imprimitive codes}\label{sub-imprimclass}

Now we classify the codes $\Ga$ admitting an imprimitive neighbour-transitive subgroup of $\Sym(V)$. 

\begin{proposition}\label{imprim}
Suppose that $\Ga\subset \Xv$, where $2\leq k\leq |\V|-2$, and $\Ga$ admits a 
neighbour-transitive subgroup of $\Sym(\V)$ that is transitive and imprimitive on $\V$. Then
$\Ga$ is as in Example~{\rm\ref{ex:imprim}} or~{\rm\ref{ex:imprim2}}.
\end{proposition}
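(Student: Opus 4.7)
The plan is to exploit the $\U$-type of codewords and neighbours relative to a $G$-invariant partition $\U$. Let $G \le \Aut(\Ga)\cap\Sym(\V)$ be transitive and imprimitive on $\V$, and fix a non-trivial $G$-invariant partition $\U=\{U_1,\dots,U_b\}$ into $b\ge 2$ parts of common size $a\ge 2$, so $v=ab$. Because $G$ is transitive on $\Ga$ and on $\Ga_1$ and preserves $\U$-types, every codeword has the same $\U$-type $\t$ and every neighbour has the same $\U$-type $\t'$. The driving observation is that any $k$-subset adjacent to a codeword lies in $\Ga\cup\Ga_1$, so its $\U$-type is $\t$ or $\t'$; this sharply restricts the swaps of any $\gamma\in\Ga$.

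I would split into two cases according to whether some (equivalently every) codeword is a union of parts of $\U$. In the union-of-parts case, set $k=ak_0$ and
\[
\Ga_0 \;=\; \bigl\{\,\{U\in\U:U\subseteq\gamma\}:\gamma\in\Ga\,\bigr\} \;\subseteq\; \binom{\U}{k_0};
\]
then $\Ga=\Ga(a,\Ga_0)$. For $b\ge 4$ this places $\Ga$ in Example~\ref{ex:imprim2}. For $b\in\{2,3\}$, the transitive induced $G$-action on $\U$ forces $\Ga_0=\binom{\U}{k_0}$ with $k_0\in\{1,b-1\}$, giving Line~1 or Line~2 of Table~\ref{tbl-utype} in Example~\ref{ex:imprim}.

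In the other case some part $U_j$ meets $\gamma$ in $p$ points with $0<p<a$. Within-part swaps in such a part preserve $\t$ and hence yield codewords. A between-part swap transferring a point from $U_i$ (with $|U_i\cap\gamma|=p\ge 1$) to a distinct $U_j$ (with $|U_j\cap\gamma|=q\le a-1$) preserves $\t$ iff $p=q+1$; otherwise it yields a $k$-subset of $\U$-type $\t\setminus\{p,q\}\cup\{p-1,q+1\}$, which, by the driving observation, must equal $\t'$. A case analysis on the support $S=\{j\ge 0:m_j>0\}$ of the multiplicities of $\t$, enforcing that all such type-changing swaps produce the \emph{same} $\t'$, shows $|S|\le 2$ and identifies $\t$ with one of the seven patterns in Table~\ref{tbl-utype}: $S=\{s\}$ with $1\le s\le a-1$ gives Line~3 (when $s=1$ and $k=b$) or Line~5; $S=\{0,s_2\}$ with $s_2\ge 1$ gives Line~1 or Line~3; $S=\{s_1,a\}$ with $s_1\ge 1$ gives Line~2, 4 or 7; and $S=\{s_1,s_1+1\}\subseteq\{1,\dots,a-1\}$ forces $m_{s_1}=m_{s_1+1}=1$, hence $b=2$, giving Line~6.

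To promote `$\gamma$ has type $\t$' to $\Ga=\Ga(a,b;\t)$, I would combine the within-part swaps (which produce codewords and permit any $p$-subset of each part to be chosen) with the between-part $p=q+1$ swaps (which permute which part carries which size) and the transitivity of the induced $G$-action on $\U$; together these show that every $k$-subset of $\U$-type $\t$ lies in $\Ga$, so $\Ga=\Ga(a,b;\t)$ as in Example~\ref{ex:imprim}. The principal obstacle is the combinatorial case analysis in Case~B: one must enumerate, for each candidate shape of $\t$, the available type-changing swaps and verify that the multi-set equation $\t\setminus\{p,q\}\cup\{p-1,q+1\}=\t'$ admits a consistent solution only in the listed cases; the scattered-looking conditions `$m_{s_i}=1$', `$b=2$' and `$a\ge 3$' in Table~\ref{tbl-utype} emerge precisely as the non-degeneracy conditions under which these equations are consistent.
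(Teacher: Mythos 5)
Your overall strategy is the same as the paper's: use the fact that $G$ preserves $\U$-types, so all codewords share one type $\t$ and all neighbours share one type $\t'$, and that every $k$-set adjacent to a codeword lies in $\Ga\cup\Ga_1$, so among the sets obtained from $\ga$ by single swaps at most one type other than $\t$ can occur. The paper organises the resulting case analysis by ordering $e_1\geq\dots\geq e_b$, reducing to $k\leq v/2$ by complementation, and splitting on small $(a,b)$ and on $e_1-e_b$; you organise it by the support $S$ of the multiplicity vector of $\t$. That reorganisation is legitimate in principle, but your stated outcome of it is wrong.

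The concrete error is the claim that the consistency condition forces $|S|\leq 2$. Line~7 of Table~\ref{tbl-utype} is a genuine conclusion of the proposition: $a=2$, $k$ odd, $\t=\{2^{(k-1)/2},1\}$, which written with its zero multiplicities is $\{2^{(k-1)/2},1,0^{b-(k+1)/2}\}$. Since $k\leq v-2=2b-2$, the number of empty parts $b-(k+1)/2$ is at least $1$, so this type has support $\{0,1,2\}$, i.e.\ $|S|=3$. (One checks directly that it passes your consistency test: the only type-changing swaps are $(2,0)\to(1,1)$, all yielding the single neighbour type $\{2^{(k-1)/2-1},1^3,0^{\ast}\}$; the swaps $(2,1)\to(1,2)$ and $(1,0)\to(0,1)$ preserve $\t$.) You file Line~7 under ``$S=\{s_1,a\}$ with $s_1\geq1$'', i.e.\ no empty parts, which is impossible for this type. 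So as written your enumeration either omits these codes or ``proves'' they cannot occur; the missing case is $|S|=3$, which a correct analysis shows survives only when $a=2$ and $\t=\{2^i,1,0^l\}$, precisely Line~7. You need to add this branch (and verify that $S=\{0,s,a\}$ with $a\geq3$, and $\{0,1^j,\dots\}$ with $j\geq2$, are all killed by exhibiting two inequivalent type-changing swaps).

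A secondary gap: you repeatedly infer ``this swap preserves $\t$, hence yields a codeword.'' That inference is only valid once you know $\t'\neq\t$, i.e.\ once you have exhibited at least one type-changing swap from $\ga$ (which then pins $\t'$ down and shows type-$\t$ neighbours of $\ga$ cannot lie in $\Ga_1$). The paper is careful about exactly this point — see the $b=2$ case, where the subcase $\t_\U(\be_2)=\t_\U(\ga)$ is treated separately and a third swap $\be_3$ with a different type is produced before any type-preserving swap is declared a codeword. In all surviving configurations a type-changing swap does exist, but this must be checked case by case rather than assumed.
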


\begin{proof} Let $G\leq\Aut(\Ga)$ be neighbour-transitive and 
imprimitive on $\V$, and let $\U=\{U_1|U_2|\dots|U_b\}$ be a 
$G$-invariant partition of $\V$ with $b$ 
parts of size $a$, where $v=ab, a>1, b>1$.
Choose $\ga\in\Ga$, set $e_i:=|\ga\cap U_i|$ for each $i$, and re-label the 
$U_i$ so that $e_1\geq e_2\geq\dots\geq e_b$. Then $\ga$ is of 
$\mathcal{U}$-type $\t_\U(\ga)=\{e_1, e_2,\dots,e_b\}$. 
We examine various $k$-subsets $\be$ of $\V$ such that $d(\ga,\be)=1$. If
$\t_\U(\be)\ne \t_\U(\ga)$, then $\be\not\in\Ga$ since $G$ is transitive on $\Ga$ and 
preserves the $\mathcal{\U}$-types of $k$-subsets of $\V$. Hence $\be\in\Ga_1$. 
Moreover, since $G$ is transitive on $\Ga_1$, there do not exist two  
$k$-subsets $\be_1$ and $\be_2$ with  $d(\ga,\be_1)=d(\ga,\be_2)=1$ such that
$\t_\U(\be_1), \t_\U(\be_2)$ and $\t_\U(\ga)$ are pairwise distinct. 
We use the following notation. \emph{
For each $i$ such that $e_i>0$, $\u_i$ denotes a typical point of $\ga\cap U_i$,
and for each $i$ such that $e_i<a$, $\w_i$ denotes a typical point of 
$U_i\setminus(\ga\cap U_i)$.}

To simplify the analysis, replacing $\Ga$ by its complementary code $\ov{\ga}$
in $J(v,v-k)$ if necessary, we may assume that $k\leq v/2$ (see remark (d) in 
Subsection~\ref{rem-flag-tra}). Note that in Table~\ref{tbl-utype}, the codes in 
lines 2, 4 are complementary to codes in lines 1, 3 respectively, while for the other 
lines the complementary code belongs to the same line.

\medskip\noindent
{\bf Case $b=2$.}\quad Since $k\leq v/2=a$, if $e_1=a$, then $k=a$, 
$\ga=U_1$, and as $G$ is transitive on $\mathcal{U}$, we have 
$\Ga=\mathcal{U}$ as in Example~\ref{ex:imprim}, Line 1 of Table~\ref{tbl-utype},
and in Example~\ref{ex:imprim2} with $k_0=1$. 
Suppose then that $e_1<a$, and hence also $e_2<a$,
and set $\be_1=(\ga\setminus\{\u_1\})\cup \{\w_1\}$ and $\be_2=(\ga\setminus
\{\u_1\})\cup \{\w_2\}$. Then $\t_\U(\be_1)=\t_\U(\ga)$ while $\t_\U(\be_2)
=\{e_1-1,e_2+1\}$. 
Suppose first that $\t_\U(\be_2)=\t_\U(\ga)$. Then $k$ 
is odd, $\t_\U(\ga)=\{\frac{k+1}{2},\frac{k-1}{2}\}$, and $\be_3:=(\ga\setminus\{
\u_2\})\cup\{\w_1\}$ has $\t_\U(\be_3)\ne \t_\U(\ga)$. Thus in this case $\be_3\in \Ga_1$
and $\be_1, \be_2\in\Ga$ for all choices of the $\u_i, \w_i$, so $\Ga$ consists 
of all $k$-subsets of type $\t_\U(\ga)$, and $a\geq \frac{k+1}{2}+1\geq3$,
as in Example~\ref{ex:imprim},  Line 6 of Table~\ref{tbl-utype}. 
Now assume that $\t_\U(\be_2)\ne \t_\U(\ga)$ so that
$\be_2\in\Ga_1$. Then $\be_1\in\Ga$ for all choices of $\u_1,\w_1$. Suppose also
that $e_2>0$. Then $(\ga\setminus\{\u_2\})\cup\{\w_2\}\in\Ga$ for all choices of
$\u_2,\w_2$. Since 
$G$ is transitive on $\mathcal{U}$ it follows in this case that $\Ga$ consists of all
$k$-subsets of type $\t_\U(\ga)$. If $e_1=e_2$, then $\Ga$ is as in 
Example~\ref{ex:imprim},  Line 3 or 5 of Table~\ref{tbl-utype}. 
So assume that $e_1>e_2>0$. Then $\be_3:=(\ga\setminus\{
\u_2\})\cup\{\w_1\}$ has $\t_\U(\be_3)\ne \t_\U(\ga), \t_\U(\be_2)$, which is a 
contradiction. This leaves the possibility $e_2=0$, and here $\Ga$ is as in Example~\ref{ex:imprim}, Line 1 of 
Table~\ref{tbl-utype}.

\medskip\noindent
{\bf Case $a=2$, $b\geq3$.}\quad Here $\t_\U(\ga)=\{2^c,1^d\}$ for some $c,d$
such that $k=2c+d$. Suppose first that $c\geq1$ and $d\geq2$. Since $k\leq 
v/2=b$, we have $e_b=0$. 
Then $(\ga\setminus\{\u_1\})\cup\{\w_b\}$ has $\U$-type
$\{2^{c-1},1^{d+2}\}$, and $(\ga\setminus\{\u_{c+2}\})\cup\{\w_{c+1}\}$ 
has $\U$-type $\{2^{c+1},1^{d-2}\}$, so both lie in $\Ga_1$. Thus $G$
is not transitive on $\Ga_1$ and we have a contradiction. Therefore either $c=0$
or $d\leq 1$. Suppose first that $c=0$, and hence $d=k$. Then  
$\be:=(\ga\setminus\{\u_{2}\})\cup\{\w_{1}\} 
\not\in \Ga$, since it has $\U$-type $\{2,1^{k-2}\}$, and so $\be\in\Ga_1$.
Thus, for each $i\leq d$ and $j>d$, $(\ga\setminus\{\u_{i}\})\cup\{\w_{j}\}$ 
has $\U$-type $\t_\U(\ga)=\{1^k\}$ and hence lies in $\Ga$. 
It follows that $\Ga$ consists of all $k$-subsets of type 
$\{1^k\}$, as in Example~\ref{ex:imprim},  Line 3 of Table~\ref{tbl-utype}. 
Thus we may assume that $c>0$ and
$d\leq 1$. 

Next take $d=0$, so $c=k/2$. If $c=1$ then $\Ga$ is as in
Example~\ref{ex:imprim},  Line 1 of Table~\ref{tbl-utype}. so assume that $c\geq2$.
For $\ga'\in\Ga$ set $\ga_0(\ga'):=\{ U\in\mathcal{U}\,|\,U\subset \ga'\}$, and let
$\Ga_0:=\{\ga_0(\ga')| \ga'\in\Ga\}$. Then $\Ga_0\subset\binom{\U}{c}$ 
with $2\leq c\leq\frac{b}{2}$, so $\Ga=\Ga(2,\Ga_0)$, as in  Example~\ref{ex:imprim2}.

Finally suppose that $d=1$, so $c=(k-1)/2$. Let
$i\leq c$ and $j>c+1$ (note that $c+1<b$ since 
$k\leq\frac{v}{2}=b$). Then $\Ga_1$ contains
$\be:=(\ga\setminus\{\u_{i}\})\cup\{\w_{j}\}$ of $\U$-type $\{2^{c-1},1^3\}$, 
and since $\ga':=
(\ga\setminus\{\u_{c+1}\})\cup\{\w_j\}$ has $\U$-type $\{2^{c},1\}$,
it lies in $\Ga$. Letting $\w_j$ and $j$ vary, we deduce that $\Ga$ contains all 
$k$-subsets that contain $\ga\setminus\{\u_{c+1}\}=U_1\cup\dots,\cup U_c$. 
Also the $k$-subset $\ga'':=(\ga'\setminus\{\u_{i}\})
\cup\{\w_{j}'\}$, where $U_{j}=\{\w_{j},\w_{j}'\}$, has $\U$-type $\{2^{c},1\}$, 
and hence lies in $\Ga$. Applying the previous argument to $\ga''$ yields 
that $\Ga$ contains all $k$-subsets that contain $U_1\cup\dots\cup U_{i-1}\cup 
U_{i+1}\cup\dots\cup U_c\cup U_{j}$, and this holds for all $i\leq c<j$. 
It follows that $\Ga$ consists of all $k$-subsets with $\U$-type $\{2^c,1\}$ as in 
Example~\ref{ex:imprim}, Line 7 of Table~\ref{tbl-utype}.

\medskip\noindent
{\bf Case $a\geq3$ and $b\geq3$. } We divide this remaining case into several subcases. 

\medskip\noindent
{\bf Subcase $e_1-e_b\leq1$.} \quad 
Here $\t_\U(\ga)=\{c^i,(c-1)^{b-i}\}$, with 
$1\leq c\leq a$ and $1\leq i\leq b$, and 
$k=ci+(c-1)(b-i)\leq v/2$, so that $c<a$. Suppose first that both 
$i<b$ and $c\geq2$. Then $\be_1:=(\ga\setminus\{\u_{i+1}\})\cup\{\w_1\}$ has $\U$-type
$\t_\U(\be_1)=\{c+1,c^{i-1},(c-1)^{b-i-1},c-2\}\ne \t_\U(\ga)$, and hence $\be_1
\in\Ga_1$. If $i\geq2$, then $(\ga\setminus\{\u_{2}\})\cup\{\w_1\}$ has $\U$-type
$\{c+1,c^{i-2},(c-1)^{b-i+1}\}\ne \t_\U(\ga)$ or $\t_\U(\be_1)$ and we have a
contradiction. Thus $i=1$, but then $(\ga\setminus\{\u_{3}\})\cup\{\w_2\}$ 
has $\U$-type $\{c^{2},(c-1)^{b-3},c-2\}$ and again
we have a contradiction. Therefore either $i=b$ or $c=1$. Suppose first that $i=b$.
Then, for distinct $j,\ell$, the $k$-subset $\be_2:=(\ga\setminus\{\u_{j}\})
\cup\{\w_\ell\}$ has $\U$-type $\t_\U(\be_2)=\{c+1,c^{b-2},c-1\}$, so 
$\be_2\in\Ga_1$. This implies that $(\ga\setminus\{\u_{j}\})
\cup\{\w_j\}\in\Ga$, for all $j$ and all choices of the points $\u_j,
\w_j$, and it follows easily that $\Ga$ 
consists of all $k$-subsets of $\U$-type
$\{c^b\}$ as in Example~\ref{ex:imprim}, Line 3 or 5 of Table~\ref{tbl-utype}.
Suppose finally that $i<b$ and $c=1$. Again it is easy to see that
$k$-subsets in $\Ga_1$ have $\U$-type $\{2,1^{k-2}\}$ and that $\Ga$ consists of all 
$k$-subsets of $\U$-type $\{1^k\}$ as in Example~\ref{ex:imprim}, Line 3 of Table~\ref{tbl-utype}.

\medskip
Thus we may assume that $e_1\geq e_b+2$. Let $j$ be minimal such that 
$e_1\geq e_j+2$. Then $e_{j-1}=e_1$ or $e_1-1$, and $e_j\leq a-2$. Define 
$\be:=(\ga\setminus\{\u_1\})\cup\{\w_j\}$ with 
$\t_\U(\be)=\{e_1-1,e_2,\dots,e_{j-1},e_j+1,e_{j+1},\ldots,e_b\}$ (possibly with
the first and $j^{th}$ entries out of order). In particular $\t_\U(\be)\ne 
\t_\U(\ga)$, so $\be\in\Ga_1$. Note that there is no entry of 
either $\t_\U(\ga)$ or $\t_\U(\be)$ greater than $e_1$.

\medskip\noindent
{\bf Subcase $e_b+2\leq e_1<a$.}\quad If $e_2>0$, then 
$(\ga\setminus\{\u_2\})\cup\{\w_1\}$ has $\U$-type different from 
$\t_\U(\ga), \t_\U(\be)$, and we have a contradiction. 
Thus  $e_2=0$, so $j=2$, $k=e_1<a$, $\ga\subset U_1$, and so
$\t_\U(\be)=\{e_1-1,1\}$. Since $G$ is transitive on $\Ga$ and $\Ga_1$, 
it follows that $\Ga$ consists of all $k$-subsets of $\U$-type $\{k\}$, 
as in Example~\ref{ex:imprim}, Line 1 of Table~\ref{tbl-utype}. 

\medskip\noindent
{\bf Subcase $e_b+2\leq e_1=a$.}\quad
There exist $j_1\geq1, j_2\geq0$ such that $j_1+j_2=j-1$ and 
$\t_\U(\ga)=\{a^{j_1},(a-1)^{j_2},e_j,\dots,e_b\}$. Then 
$\t_\U(\be)=\{a^{j_1-1},(a-1)^{j_2+1},e_j+1,e_{j+1},\dots,e_b\}$. 
Also $\be':=(\ga\setminus\{\u_1\})\cup\{\w_b\}$
has $\U$-type $\{a^{j_1-1},(a-1)^{j_2+1},e_j,\dots,e_{b-1},e_b+1\}$,  
different from $\t_\U(\ga)$, and so $\be'\in\Ga_1$ and $\t_\U(\be')=
\t_\U(\be)$. This implies that $e_j=e_b=c$, say, and $c\leq a-2$, so that
$\t_\U(\ga)=\{a^{j_1},(a-1)^{j_2},c^{b-j+1}\}$ and 
$\t_\U(\be)=\{a^{j_1-1},(a-1)^{j_2+1},c+1,c^{b-j}\}$.

Suppose that  $j_2>0$. 
Then $\ga':=(\ga\setminus\{\u_{j_1+1}\})\cup\{\w_j\}$
has $\U$-type $\{a^{j_1},(a-1)^{j_2-1},a-2,c+1,c^{b-j}\}\ne \t_\U(\be)$,
and so $\ga'\in\Ga$. Thus $\t_\U(\ga')=\t_\U(\ga)$ and it follows that $c=a-2$.
If $a\geq4$ then $k>b\cdot\frac{a}{2}=\frac{v}{2}$, which is not so, and hence 
$a=3$. Thus $k=b+2j_1+j_2\leq \frac{v}{2}=\frac{3b}{2}$, so $2j_1+j_2\leq
\frac{b}{2}$. In particular, the number of entries of $\t_\U(\ga)$ equal 
to 1 is $b-j_1-j_2\geq \frac{b}{2}+j_1\geq2$. Hence the $k$-subset
$(\ga\setminus\{\u_b\})\cup\{\w_j\}$
has $\U$-type different from $\t_\U(\ga)$ and $\t_\U(\be)$, 
and this is a contradiction.

Thus $j_2=0$, so $\t_\U(\ga)=\{a^{j-1},c^{b-j+1}\}$ and 
$\t_\U(\be)=\{a^{j-2},a-1,c+1, c^{b-j}\}$. Since $k\leq \frac{v}{2}$ 
we have $\frac{ab}{2}\leq 
v-k=(a-c)(b-j+1)$. In particular, $b-j+1> b/2>1$.
If $c>0$ then $(\ga\setminus\{\u_b\})\cup\{\w_j\}$ has
$\U$-type $\{a^{j-1},c+1,c^{b-j-1},c-1\}$ different from $\t_\U(\ga)$ 
and $\t_\U(\be)$, and this is a contradiction. Thus $c=0$ and $\t_\U(\ga)=
\{a^{j-1}\}$ with $k=a(j-1)$. 
If $j=2$ then $\ga=U_1$, $k=a$, and $\Ga$ is as in 
Example~\ref{ex:imprim}, Line 1 of Table~\ref{tbl-utype}. 
If $j\geq3$, define $\Ga_0:=\{\ga_0(\ga')|\ga'\in\Ga\}$, where for
$\ga'\in\Ga$, $\ga_0(\ga')=\{U\in\mathcal{U}\,|\,U\subseteq\ga'\}$.
Since $2\leq j-1\leq b/2<b$, $\Ga=\Ga(a,\Ga_0)$, as in
Example~\ref{ex:imprim2}.
\end{proof}

The proof of Theorem~\ref{notprim} follows from Propositions~\ref{intrans} and~\ref{imprim}.

\section{Primitive neighbour-transitive codes}\label{sect:primitive}

%From Sections~\ref{sect:intrans} and~\ref{sect:imprim}, 
For our analysis of $G$-neighbour-transitive codes in $J(v,k)$ with $G\leq\Sym(\V)$,
it remains to consider codes $\Ga\subset\binom{\V}{k}$ such that 
$\Aut(\Ga)\cap\Sym(\V)$ acts primitively on $\V$. Our first task is to 
prove Theorem~\ref{flagtra}. We use the notation introduced in Subsection~\ref{notn}.

\begin{lemma}\label{lem:Delta}
   If $\Ga\subset\binom{\V}{k}$ and $\Aut(\Ga)\cap\Sym(\V)$ is transitive on $\V$, 
then, for $u\in\V$, the set $\Delta(\u)=\bigcap\{\ga'\in\Ga\,|\,\u\in\ga'\}$ is a 
block of imprimitivity for  $\Aut(\Ga)\cap\Sym(\V)$ in $\V$. In particular, if
$\Aut(\Ga)\cap\Sym(\V)$ is primitive on $\V$, 
then $\Delta(\u)=\{u\}$.
\end{lemma}

\begin{proof}
Let $G=\Aut(\Ga)\cap\Sym(\V)$ and $g\in G$. Then $\Delta(\u)^g=\Delta(\u^g)$, 
and hence, since $G$ is transitive on $\V$, 
$|\Delta(\u)|$ is independent of $\u$. Suppose that 
$g\in G$ and $\w\in\Delta(\u)\cap\Delta(\u)^g$. Now $\w\in\Delta(\u)$ 
implies that $\Delta(\u)\supseteq\Delta(\w)$, and consequently these two sets 
are equal. Similarly $\Delta(\w)=\Delta(\u^g)$, and hence $\Delta(\u)=
\Delta(\u)^g$. Thus $\Delta(\u)$ is a block of imprimitivity for the action of 
$G$ on $\V$. In particular, if $G$ is primitive on $\V$ then,
since $|\Delta(\u)|\leq k<v$, we conclude that 
$\Delta(\u)=\{\u\}$.   
\end{proof}

\subsection{Proof of Theorem~\ref{flagtra}}  Suppose that  
$\Ga\subset\binom{\V}{k}$,  $\ga\in\Ga$, $2\leq k\leq v-2$, 
and $G\leq \Aut(\Ga)\cap\Sym(\V)$. 
To prove part (a), suppose first that $G$ is incidence-transitive on $\Ga$ and $\de(\Ga)\geq2$. In particular $G$ is 
transitive on $\Ga$. Since $\de(\Ga)\geq2$, it follows 
that $J(\ga)\subseteq\Ga_1$, and hence $G_\ga$ is transitive on $J(\ga)$. 
This implies that $G_\ga$ is transitive on $\ga\times\ov\ga$. The converse assertion follows from Lemma~\ref{str-flagtra}.

(b)\quad Next suppose that $\de(\Ga)\geq3$ and $\Ga$ is $G$-neighbour transitive. By
part (a), it is sufficient to prove that $\Ga$ is $G$-incidence transitive. Let $(\ga,\ga_1), (\ga',\ga_1')\in\Ga\times\Ga_1$ 
be two incidences. Since $G$ is transitive on $\Ga_1$ there exists $g\in G$ such that $\ga_1^g=\ga_1'$. 
Then $\ga^g, \ga'$ both lie in $J(\ga_1')$ and hence $d(\ga^g,\ga')\leq2$. Since $\de(\Ga)\geq3$, this means that $\ga^g=\ga'$. Hence $g$ maps  $(\ga,\ga_1)$ to $(\ga',\ga_1')$, so $\Ga$ is $G$-incidence-transitive.

(c)\quad Finally assume that $G$ is primitive on $\V$ and that $\Ga$ is $G$-strongly incidence transitive. 
Let $\u\in\V$. Since $\ov{\Ga}$ is also $G$-strongly incidence-transitive (see Subsection~\ref{rem-flag-tra}),
we may assume that $2\leq k\leq v/2$. By Lemma~\ref{lem:Delta}, the set $\Delta(\u)=\bigcap\{\ga'\in\Ga\,|
\,\u\in\ga'\}$ is equal to $\{u\}$.  We use $G$-strong incidence-transitivity. 
Let $\w,\w'$ be distinct points of 
$\V\setminus\{\u\}$. Since $\Delta(\u)=\{\u\}$, there are codewords $\ga, \ga' 
\in \Ga$ containing $\u$  such that
$\w \not\in \ga$ and $\w' \not\in \ga'$. Since $k\le v/2$ and 
$\u \in \ga\cap\ga'$, it follows that $\ov\ga\cap\ov 
\ga'$ contains at least one point, $\v$ say. By strong incidence-transitivity, $G_{\ga,\u}$ 
 is transitive on $\ov\ga$  and $G_{\ga' ,\u}$ is transitive on $\ov\ga'$.
Hence there are elements $g\in G_{\ga, \u}$ and $g'\in G_{\ga', \u}$ such that $\w^g = \v$ 
and $\v^{g'} = \w'$. 
It follows that $gg'\in G_{\u}$ and $\w^{gg'}=\w'$, and hence that $G_\u$ is transitive on 
$\V\setminus\{\u\}$. Thus $G$ is 
2-transitive on $\V$, completing the proof.

\subsection{Organising the 2-transitive classification}\label{sect:organisation}

From now on we suppose that $\Ga\subset\binom{\V}{k}$ with $\delta(\Ga)\geq2$, 
where $2\leq k\leq |\V|-2=v-2$, and that $G\leq\Aut(\Ga)\cap\Sym(\V)$
acts strongly incidence-transitively on $\Ga$ and 2-transitively on $\V$. 
Since $\delta(\Ga)\geq2$,  $\Ga$ is a proper subset of $\binom{\V}{k}$, and in particular
the group $G$ is not $k$-homogeneous on $\V$, that is to say, 
$G$ is not transitive on the $k$-subsets of $\V$. In particular $3\leq k\leq v-3$, and 
$G$ does not contain the alternating group $A_v$.  Note that,  by Theorem~\ref{flagtra},
each $G$-neibour transitive code $\Ga$ with $\delta(\Ga)\geq3$ is $G$-strongly incidence-transitive. 

\medskip\noindent
\emph{Comments on the strategy:}\quad 
Those $2$-transitive groups $G$ which do not lie in an infinite family of 
2-transitive groups have been analysed completely in \cite{NP}. Thus we assume 
that $G$ lies in one of the infinite families 
of $2$-transitive groups, as listed in for example in \cite[Chapter 7.3 and 7.4]{Cam}
or \cite[Chapter 7.7]{DM}. As explained in the introduction, in this paper we address 
all families apart from the symplectic groups acting on quadratic forms. Thus
we investigate the following cases.
\begin{description}
   \item[affine]\quad $G\leq {\rm A}\GaL(\V)$ acting on $\V=\F_q^n$;
\item[linear] \quad $\PSL(n,q)\leq G\leq\PGaL(n,q)$ on $\PG(n-1,q)$;
\item[rank 1] \quad the Suzuki, Ree and Unitary groups.
\end{description}

We treat the various infinite families of 2-transitive groups $G$
separately. Let $\ga\in\Ga$. Since $\Ga$ is $G$-strongly incidence-transitive,  
$G_\ga$ is transitive on $\ga\times\ov\ga$. 
In particular $k(v-k)$ divides $|G_\ga|$, and $G$ is not $k$-homogeneous. 

For each of these $2$-transitive groups $G$, we need to determine all possibilities 
for the stabiliser $G_\ga$ (up to conjugacy). Note that, if $G_\ga\leq H < G$ 
and $H$ is intransitive on $\V$ then, since $G_\ga$ has only two orbits on $\V$, 
namely $\ga$ and $\ov\ga$, it follows that the $H$-orbits are the sets $\ga$ and 
$\ov\ga$, and hence $H=G_\ga$. Thus $G_\ga$ is a proper subgroup of $G$ which is
maximal subject to having two orbits in $\V$.   
We make a small observation about the case of a transitive subgroup $H$. 

\begin{lemma}\label{obs}
Suppose that $\Ga$ is $G$-strongly incidence-transitive with $G\leq\Aut(\Ga)\cap\Sym(\V)$. 
Let $\ga\in\Ga$, and suppose that $G_\ga < H < G$ with $H$ transitive 
on $\V$ and leaving invariant a non-trivial partition $\Pi$ of $\V$. 
Then $\ga$ is a union of some of the blocks of $\Pi$. 
% of $G$ containing $G_\ga$ and $H$ is 
% transitive on $\V$. Suppose also that $N$ is a normal subgroup of $H$ which is intransitive 
% on $\V$. Then $\ga$ is a union of $N$-orbits.
\end{lemma}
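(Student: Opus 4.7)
\textbf{Proof proposal for Lemma~\ref{obs}.} The plan is to argue by contradiction: assume $\ga$ is not a union of $\Pi$-blocks and derive that some block of $\Pi$ equals all of $\V$, contradicting non-triviality of $\Pi$. The core tool is $G$-strong incidence-transitivity, which gives that $G_\ga$ is transitive on $\ga \times \ov\ga$; equivalently, for any $\u\in\ga$ the stabiliser $G_{\ga,\u}$ is transitive on $\ov\ga$, and symmetrically, for any $\w\in\ov\ga$, $G_{\ga,\w}$ is transitive on $\ga$. Moreover, since $G_\ga\leq H$ and $H$ preserves $\Pi$, every subgroup of $G_\ga$ also permutes the blocks of $\Pi$.

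The first step is to observe that, by transitivity of $G_\ga$ on $\ga$, the property ``the $\Pi$-block containing $\u$ is contained in $\ga$'' holds for all $\u\in\ga$ if and only if it holds for one. So fix an arbitrary $\u\in\ga$ and let $B$ denote the (unique) block of $\Pi$ with $\u\in B$. Since $G_{\ga,\u}$ fixes $\u$, it fixes $B$ setwise. Hence the subset $(B\setminus\{\u\})\cap\ov\ga$ is $G_{\ga,\u}$-invariant, and by transitivity of $G_{\ga,\u}$ on $\ov\ga$ it must be either $\emptyset$ or all of $\ov\ga$. If it is empty, then $B\subseteq\ga$, which is exactly what we want.

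The main obstacle, and where the argument really bites, is ruling out the alternative $(B\setminus\{\u\})\cap\ov\ga = \ov\ga$, i.e.\ $\ov\ga\subseteq B$. To handle this, I would apply the symmetric half of strong incidence-transitivity: pick any $\w\in\ov\ga$ (so $\w\in B$, whence the block containing $\w$ is again $B$); then $G_{\ga,\w}$ fixes $B$ setwise and acts transitively on $\ga$, so $(B\setminus\{\w\})\cap\ga$ is either $\emptyset$ or $\ga$. The first option fails because $\u\in B\cap\ga$ with $\u\neq\w$, so we must have $\ga\subseteq B$. Combining $\ga\subseteq B$ and $\ov\ga\subseteq B$ yields $B=\V$, contradicting the assumption that $\Pi$ is a non-trivial partition.

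Having excluded the bad alternative, we conclude $B\subseteq\ga$, and by the initial reduction this holds for every $\u\in\ga$. Therefore $\ga$ is a union of blocks of $\Pi$, as claimed. The only subtle point is keeping track of the two complementary transitivity statements encoded in strong incidence-transitivity; once both are invoked, the symmetry of the argument closes up cleanly with no case analysis on block sizes or on $k$.
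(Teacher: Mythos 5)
Your proof is correct and follows essentially the same route as the paper: both arguments fix a block $B$ meeting $\ga$, use the transitivity of $G_{\ga,\u}$ on $\ov\ga$ together with the $G_{\ga,\u}$-invariance of $B$ to force $\ov\ga\subseteq B$ in the bad case, and then conclude $B=\V$, contradicting non-triviality. The only (cosmetic) difference is in the last step: the paper deduces $\ga\subseteq B$ by noting that $B\supseteq\ov\ga$ forces $G_\ga$ to stabilise $B$, whereas you invoke the symmetric transitivity of $G_{\ga,\w}$ on $\ga$; both are valid.
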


\begin{proof}
Let $\pi\in\Pi$ be a block of $\Pi$ containing a point $\u$ of $\ga$. 
We claim that $\pi\subseteq\ga$. Suppose to the contrary that 
$\pi\cap\ov{\ga}$ contains a point $\w$. Then $G_{\ga,\u}\leq 
H_{\u} < H_\pi$, so $\pi$ contains the $G_{\ga,\u}$-orbit 
containing $\w$, namely $\ov{\ga}$. This implies that $G_\ga$ 
fixes the block $\pi$ setwise, so $\pi$ also contains the $G_\ga$-orbit 
containing $\u$, namely $\ga$. Thus $\pi=\V$, a contradiction.
Hence $\ga\subseteq\pi$.  
\end{proof}

\section{Affine groups}\label{sec-aff}

In this section we treat the 2-transitive affine groups. Here $\V =\F_q^n$ is an $n$-dimensional 
vector space over a field $\F_q$ of order $q=p^a$ and $v=q^n$, where 
$p$ is a prime and $a, n\geq1$. The group $G$ is a semidirect product 
$N.L$, where $N$ is the group of translations of $\V$ and $L$ is a 
subgroup of the group $\GaL(\V)$ of semilinear transformations of $\V$, 
which is transitive on $\V^\#=\V\setminus\{0\}$. So $G$ is a subgroup of 
$X=\AGaL(\V)$, the full affine semilinear group. We view $\V$ as the point 
set of the affine geometry ${\AG}(n,q)$. We use the notation introduced in 
Section~\ref{sect:organisation}.

\subsection{One-dimensional affine groups}\label{sub-aff1}

Here $n=1$ and we identify $\V$ with $\F_q$. For application in 
the case of arbitrary dimension, we only assume in this subsection 
that $1\leq k\leq v-1$, and we do not insist that 
$\Ga$ is a proper subset of $\binom{\V}{k}$. 
Let $x$ be a primitive element of $\F_q$, so that $A:=\la x\ra$
is the multiplicative group of $\F_q$. Let $B:=\la \si\ra= \Aut(\F_q)
\cong Z_a$. Then $N\cong Z_p^a$ and $X=N.(A.B)=\AGaL(1,q)$.
 The aim of this section is to prove Proposition~\ref{aff1}.

%Since $X$ is 2-transitive on $\Om$ we may assume, by 
%replacing $G$ by an $X$-conjugate if necessary, that 
 %$\Om_1$ contains both $0$ and $1\in F_q$.

\begin{proposition}\label{aff1}
If $n=1$, then one of the following holds.
\begin{itemize}
\item[(i)] $k=1$ or $q-1$; 
\item[(ii)] $v=q=4$, $k=2$, $G_\ga= \AGL(1,2).2\cong Z_2^2$, 
$\ga$ or $\ov{\ga}$ is $\F_2$, and $\delta(\Ga)=1$;
\item[(iii)] $v=q=16$, $k=4$ or $12$, $G_\ga= [2^2].\GL(1,4).[4]$, 
$\ga$ or $\ov{\ga}$ is $\F_4$ (as $k$ is $4$ or $12$ respectively), and $\delta(\Ga)=3$.
\end{itemize}
\end{proposition}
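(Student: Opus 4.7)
The plan is to exploit the restricted structure of $X = \AGaL(1,q) = N \rtimes (A.B)$, where $N$ is the translation group, $A = \F_q^*$, and $B = \Aut(\F_q) \cong Z_a$. Since $G$ is $2$-transitive on $\F_q$, we have $G = N \rtimes L$ with $L \leq A.B$ transitive on $\F_q^\#$, so $|L| \leq (q-1)a$. The cases $k \in \{1, q-1\}$ give (i) immediately, since $G$ is then $k$-homogeneous. Assume $2 \leq k \leq q-2$, translating so that $0 \in \ga$. Strong incidence-transitivity gives $G_\ga$ transitive on $\ga$ and $G_{\ga,0}$ transitive on $\ov\ga$, hence $k(q-k) \mid |G_\ga|$.

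The key object is $V := N \cap G_\ga$, the $\F_p$-subgroup of $\F_q$ of translations preserving $\ga$; then $\ga$ is a union of $s := k/|V|$ cosets of $V$. A short closure argument shows that $\{\alpha \in \F_q^* : \alpha V = V\} \cup \{0\}$ is a subfield $\F_{q_0}$ of $\F_q$, so $V$ is an $\F_{q_0}$-subspace. The setwise stabiliser $\Stab_X(V)$ has order $|V|(q_0 - 1)\,a'$ with $a' := |\Aut(\F_q)_V| \leq a$, and $a' = a$ precisely when $V$ is itself a subfield. Combining $G_\ga \leq \Stab_X(V)$ with $k(q-k) \mid |G_\ga|$ yields
\[
  s(q - s|V|) \;\leq\; (q_0 - 1)\,a'.
\]

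This inequality is the engine of the proof. For $V = 0$, it reduces to $k(q-k) \leq (q-1)a$, which together with a brief inspection of subgroup orbit lengths in $\GaL(1,q)$ rules out all admissible $\ga$. For $V \neq 0$ with $2 \leq s \leq q/|V|-2$, the inequality forces $2q - 4|V| \leq (q_0 - 1)a'$, which fails for every subfield tower with $q \geq 8$, leaving only degenerate configurations. Hence $s \in \{1, q/|V| - 1\}$; replacing $\Gamma$ by $\ov\Gamma$ if necessary we may take $s = 1$, and after translation $\ga = V$, $k = |V|$. The inequality becomes $q - |V| \leq (q_0 - 1)a'$, which is maximised when $V$ is a subfield (so $q_0 = |V|$, $a' = a$), and the divisibility $q_0(q - q_0) \mid q_0(q_0 - 1)a$ reduces to $(q - q_0) \mid (q_0 - 1)a$. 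Enumerating proper subfield towers $\F_{q_0} \subsetneq \F_q$ over $\F_p$, this holds only for $(q, q_0) = (4, 2)$ and $(16, 4)$, giving (ii) and (iii).

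The main obstacle will be the clean treatment of the subcase where $V$ is a non-subfield $\F_{q_0}$-subspace of dimension $c \geq 2$. In this case $a' < a$, and the tightened inequality eliminates such $V$, but making the argument fully rigorous requires enumerating the Galois stabilisers of $\F_{q_0}$-subspaces inside $\F_q$ explicitly. The tight arithmetic $(q - q_0) = (q_0 - 1)a$ realised in both (ii) and (iii), linked to the subfield tower $\F_2 < \F_4 < \F_{16}$, reassures that no further exceptional configurations emerge.
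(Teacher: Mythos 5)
Your overall architecture (isolate $M:=G_\ga\cap N$, show it is a module over a subfield, then do arithmetic with $k(q-k)\mid|G_\ga|$) is the same as the paper's, but the engine of your proof is broken. The containment $G_\ga\leq\Stab_X(V)$, from which you derive the key inequality $s(q-s|V|)\leq(q_0-1)a'$, is false exactly in the range where you need it. Since $M\trianglelefteq G_\ga$, the $M$-orbits (the cosets of $V$) form a block system for the transitive action of $G_\ga$ on $\ga$; hence when $s\geq 2$ some element of $G_\ga$ moves the coset $V=0+V$ to a different coset, so $G_\ga$ does \emph{not} stabilise the subset $V$. The containments that are actually available are $G_{\ga,0}\leq\Stab_X(V)_0$, giving only $q-k\leq(q_0-1)a$ (divide $k(q-k)\mid|G_\ga|=k|G_{\ga,0}|$ by $k$), and $|G_\ga|\leq|V|(q-1)a$, giving $s(q-s|V|)\leq(q-1)a$. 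Neither eliminates the middle range $2\leq s\leq q/|V|-2$ by itself: for instance $q=64$, $|V|=2$, $k=62$ survives both. The paper closes this range by a genuinely different mechanism: from $p^s m(p^{a-s}-m)\mid(p^a-1)a$ it extracts the divisibility $|M|=p^s\mid a$, and then invokes Zsigmondy's theorem on primitive prime divisors of $p^a-1$ (and of $p^{a-s}-1$) together with the concavity of $x(q-|V|x)$ to force $s\in\{1,q/|V|-1\}$. Your proposal contains no substitute for the step $|M|\mid a$, and without it the reduction to $\ga=V$ does not go through.

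Two further points. First, the case $V=0$ (your ``brief inspection of subgroup orbit lengths'') is not a formality: the inequality $k(q-k)\leq(q-1)a$ admits solutions such as $q=16$, $k=4$, and the paper has to produce a primitive prime divisor $r$ of $p^a-1$, show a Sylow $r$-subgroup of $G_\ga$ is normal and fixes a unique point, and separately dispose of $q\in\{4,9,64\}$ and $a\leq 2$; none of this is sketched. Second, your final reduction silently assumes $V$ is the subfield $\F_{q_0}$ itself (so that $|V|=q_0$) before testing $(q-q_0)\mid(q_0-1)a$; you flag the non-subfield $\F_{q_0}$-subspaces as ``the main obstacle'' but do not treat them, and your side claim that $a'=a$ precisely when $V$ is a subfield is also false (e.g.\ the trace-zero hyperplane is $\Aut(\F_q)$-invariant without being a subfield). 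As written, the proof establishes the conclusion only in the subcase $s=1$ with $V$ a subfield, which is where the examples live but not where the difficulty lies.
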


We note that only the example
in Proposition~\ref{aff1}~(iii) yields a strongly incidence-transitive code, since in case (ii), $\Ga=\binom{\V}{2}$.

Set $M:=G_\ga\cap N$. A \emph{primitive prime divisor} of $p^a-1$ 
is a prime divisor $r$ of $p^a-1$ such that $r$ does not divide 
$p^i-1$ for any positive integer $i<a$. For such 
a prime $r$, $p$ has order $a$ modulo $r$ and so $a$ divides $r-1$.
In particular $r\geq a+1$. By \cite{Z}, such primes exist unless 
$(p,a)=(6,2)$, or $a=2$ and $p=2^b-1$ for some $b$.

\begin{lemma}\label{k1} 
The parameter $k\in\{1,v-1\}$ if and only if $M=1$. 
\end{lemma}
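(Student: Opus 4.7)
The forward direction is direct: if $k=1$ then $\gamma=\{\u\}$ for some $\u\in\V$, so $G_\gamma=G_\u$; the only translation fixing $\u$ is the identity, giving $M=G_\u\cap N=1$. The case $k=v-1$ is symmetric, applied to the complementary singleton $\ov\gamma=\{\u\}$.

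For the converse, suppose $M=1$. Then the quotient $\pi\colon G\to G/N\cong L$ restricts to an injection on $G_\gamma$, so $G_\gamma$ is a complement of $N$ in the subgroup $G_\gamma N\leq G$. Let $H:=\pi(G_\gamma)\leq L$ and take the natural lift $\tilde H:=L\cap\pi^{-1}(H)$; then both $G_\gamma$ and $\tilde H$ are complements of $N$ in $G_\gamma N=\tilde H N$. I claim they are $N$-conjugate. If $\gcd(p,|H|)=1$ this is Schur--Zassenhaus applied inside $G_\gamma N$. Otherwise $p\mid|H|$, which forces $p\mid a$, and I would verify $H^1(H,N)=0$ directly; for instance via inflation--restriction along the normal multiplicative subgroup $H\cap\la\tau\ra$, whose invariants in $N=\F_q$ are zero and whose order is coprime to $p$ (so both outer terms of the five-term exact sequence vanish).

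Hence, identifying $N$ with $(\V,+)$, we have $G_\gamma=\u\tilde H\u^{-1}$ for some $\u\in\V$. Since every element of $\tilde H$ fixes $0$, every element of $G_\gamma$ fixes $\u$; so the orbits of $G_\gamma$ on $\V$ are $\{\u\}$ together with the $\u$-translates of the $\tilde H$-orbits on $\F_q^\times$. But $G$-strong incidence-transitivity forces $G_\gamma$ to have exactly two orbits on $\V$, namely $\gamma$ and $\ov\gamma$, so one of them must be $\{\u\}$, giving $k=1$ or $k=v-1$. The main obstacle is the cohomological conjugacy step: the divisibility bound $k(q-k)\mid(q-1)a$ coming from $|G_\gamma|\mid|L|$ and strong incidence-transitivity is on its own not quite enough to rule out every $k$ with $2\le k\le q-2$ (for instance $k=6$, $q=16$ saturates it), so this structural argument is needed.
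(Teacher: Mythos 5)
Your proof takes a genuinely different route from the paper's. The paper argues arithmetically: since $M=1$, the order $|G_\ga|$ divides $(q-1)a$, while transitivity of $G_\ga$ on $\ga\times\ov\ga$ forces $k(q-k)$ to divide $|G_\ga|$; Zsigmondy's theorem then supplies a primitive prime divisor $r$ of $p^a-1$ that must divide $|G_\ga|$, and the resulting normal Sylow $r$-subgroup of $G_\ga$ lies (up to conjugacy) in the multiplicative group $A$, hence has a unique fixed point, which $G_\ga$ must then fix --- contradicting $2\leq k\leq q-2$. (The Zsigmondy exceptions $a\leq 2$ and $q=64$ are handled by hand.) You replace all of this by the single structural claim that the complement $G_\ga$ of $N$ in $G_\ga N$ is $N$-conjugate to one inside $L$, i.e.\ that $H^1(H,N)=0$; both proofs then finish identically, by noting that a subgroup fixing a point and having exactly two orbits forces $k\in\{1,v-1\}$. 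Your remark that bare divisibility is insufficient (e.g.\ $k=6$, $q=16$ gives $k(q-k)=60=(q-1)a$) is correct and is exactly why the paper needs the Zsigmondy step.

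However, your verification of $H^1(H,N)=0$ has a gap. The inflation--restriction argument along $H_0:=H\cap A$ (your $H\cap\la\tau\ra$) needs $N^{H_0}=0$, which holds only when $H_0\neq 1$: if $H_0=1$ the invariants are all of $N$ and the first outer term of the five-term sequence is $H^1(H,N)$ itself, so the argument is circular. The configuration $H_0=1$ with $p\mid |H|$ is not a priori excluded (it occurs precisely when $H$ embeds in the cyclic quotient of order dividing $a$ and $p\mid a$), so it must be addressed. It can be, in either of two ways: observe that $H_0=1$ gives $|G_\ga|=|H|\leq a<2(q-2)\leq k(q-k)$, contradicting transitivity on $\ga\times\ov\ga$; or compute directly that for cyclic $H=\la c\sigma^j\ra$ acting on $\F_q$ by $y\mapsto cy^{p^j}$ the cohomology vanishes (a degree count on the additive norm polynomial shows the norm maps onto $N^{H}$, and the Herbrand quotient of a finite module is $1$). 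With either patch your argument is complete and arguably cleaner than the paper's, but as written the key conjugacy step is not fully justified.
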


\begin{proof}
If $k=1$ or $v-1$ then $G_\ga$ fixes a non-zero  element of $\V$ and so $M=1$. 
Conversely suppose that $M=1$, and suppose that $2\leq k\leq v-2=q-2$. In particular $q\geq 4$.  
Then $|G_\ga|$ is divisible by $k(v-k)=k(q-k)
\geq 2(q-2)$. Also $G_\ga\cong G_\ga N/N\leq  X/N\cong AB$ so $|G_\ga|$ divides
$(p^a-1)a$. In particular 
\[
2(p^a-2)\leq k(p^a-k)\leq |G_\ga|\leq (p^a-1)a.
\] 
If $a=1$, then these inequalities imply that $q\leq3$ which is a contradiction. Thus $a\geq2$. If $q=4$ then $k=2$, but then $k(q-k)=4$ does not divide $|G_\ga|$. Hence $q\geq9$. If $a=2$ then the displayed inequalities imply that $k\in\{2,q-2\}$, but then  $k(q-k)$ does not divide $|G_\ga|$. 
Hence $a\geq3$. Next if $q=64$, then $k(64-k)$ divides $|G_\ga|$ 
which divides $63.6$, but there is no such $k\in[2,62]$.
Thus $q\ne64$ and hence there exists a primitive prime divisor $r$ of $p^a-1$, and as we observed above, $r\geq a+1>3$.  Suppose that $r$ does not divide $|G_\ga|$.
Then $2(p^a-2)\leq (p^a-1)a/r<p^a-1$ which is a contradiction.
Thus $r$ divides $|G_\ga|$. A Sylow $r$ subgroup of $AB$ is contained 
in $A$ (since $r>a$) and hence is normal in $AB$. 
It follows that $G_\ga$ has a 
normal Sylow $r$-subgroup, say $R$. Without loss of generality we 
may assume that $R\leq A$. In particular $R$ has a unique 
fixed point in $\V$ which therefore must be fixed by $G_\ga$. This contradicts $2\leq k\leq v-2$.
\end{proof}

Now we suppose that $M\ne1$.  Let 
$K=\F_p[G_\ga\cap A]$ denote the subfield of $\F_q$ generated by $G_\ga\cap A$.

\begin{lemma}\label{kvecspace} 
If $M\ne 1$ then $M$ is a $K$-vector space and 
$K$ is a proper subfield of $\F_q$.
\end{lemma}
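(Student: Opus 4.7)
The plan is to identify the additive group $N$ with $(\F_q,+)$ and the multiplicative part $A$ with $\F_q^\times$, and then exploit the fact that conjugation of $N$ by $A$ realises the natural multiplication action of $\F_q^\times$ on $\F_q$. Under this identification $M = G_\ga\cap N$ becomes an additive subgroup of $\F_q$, and the lemma reduces to a purely ring-theoretic statement.

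First I would note that, since $N\trianglelefteq X$, the subgroup $M = G_\ga\cap N$ is normal in $G_\ga$; in particular $G_\ga\cap A$ acts on $M$ by conjugation. Under the identification above, this conjugation action coincides (up to a fixed automorphism) with multiplication of $M\subseteq\F_q$ by elements of $G_\ga\cap A\subseteq\F_q^\times$. Thus $M$ is an additive subgroup of $\F_q$ that is closed under multiplication by every element of $G_\ga\cap A$, and hence closed under multiplication by every element of the unital subring they generate over $\F_p$, namely $K = \F_p[G_\ga\cap A]$. Since $K$ is a finite subring of the field $\F_q$, it is a subfield, and the closure of $M$ under multiplication by $K$ makes $M$ a $K$-vector subspace of $\F_q$. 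This establishes the first assertion.

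For the second assertion I would argue by contradiction: suppose $K = \F_q$. Then $M$ is an $\F_q$-subspace of $\F_q$, and because $M\ne 1$ it contains a nonzero element, forcing $M = \F_q$, i.e.\ $M = N$. But $N$ acts regularly on $\V$, so the unique $N$-orbit on $\V$ is all of $\V$; since $M = N \leq G_\ga$, the codeword $\ga$ must be a union of $M$-orbits and hence $\ga\in\{\emptyset,\V\}$, contradicting the standing hypothesis $1\leq k\leq v-1$. Therefore $K$ is a proper subfield of $\F_q$.

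There is no real obstacle here; the only point requiring a little care is getting the conjugation-equals-multiplication identification right so that one can legitimately pass from the group-theoretic closure of $M$ under $G_\ga\cap A$ to a $K$-vector-space structure. Once that identification is made, both conclusions follow immediately from the regularity of $N$ on $\V$ and the elementary fact that a finite subring of a field is a subfield.
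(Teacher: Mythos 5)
Your proof is correct and follows essentially the same route as the paper: identify $M$ with an additive subgroup of $\F_q$ via translations, use the fact that conjugation by $A$ realises field multiplication to deduce closure of $M$ under multiplication by $G_\ga\cap A$ and hence by $K$, and then rule out $K=\F_q$ because that would force $M=N$ to be transitive, contradicting $M\leq G_\ga$ with $\ga$ a proper nonempty subset of $\V$. The only cosmetic difference is that the paper fixes $0\in\ga$ and works with the orbit $Y=0^M$, which it needs for the subsequent lemma rather than for this one.
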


\begin{proof}
The group $M$ acts on $\V$ as a subgroup of translations. Thus for  some $Y\subseteq \F_q$,  
$M=\{t_y \,|\, y\in Y\}$, where $t_y:v\mapsto v+y$ for $v\in\V$. 
Interchanging $\ga$ and $\ov{\ga}$ if necessary, we may assume that 
$0\in\ga$. Then the $M$-orbit 
$0^M$ is equal to $Y$ and is contained in $\ga$. 
As $M$ is a subgroup of $N$, the set $Y$ is 
an $\F_p$-subspace of $\V$ (viewed as $\F_p^a$). Also $G_\ga\cap A$ normalises
$M$, and as $A$ acts by multiplication on $\V$ it follows that,
for each $z\in G_\ga\cap A$ and $t_y\in M$, we have $z^{-1}t_yz=t_{yz}\in M$,
that is to say $Y(G_\ga\cap A)\subseteq Y$. Since $K=\F_p[G_\ga\cap A]$ it follows that
$Y$ is also closed under multiplication by arbitrary elements of $K$.
Thus $Y$ is a $K$-vector space, that we identify with $M$. Since
$M$ is intransitive on $\V$, $K$ is a proper subfield of $\F_q$.
\end{proof}

\begin{lemma}\label{aff1ex} 
If $M\ne 1$ then (ii) or (iii) of Proposition~{\rm\ref{aff1}} holds.
\end{lemma}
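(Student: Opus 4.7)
The plan is to extract arithmetic inequalities from the strong incidence-transitivity of $\Ga$ that force $q\in\{4,16\}$, then identify $\ga$ in each case up to the $G$-action. Assume $0\in\ga$ and set $Y:=0^M\subseteq\ga$. Let $A_Y\leq A$ be the multiplicative stabiliser of $Y$ and $F:=\F_p[A_Y]$, a subfield of $\F_q$; one verifies that $A_Y=F^*$ and $Y$ is an $F$-subspace, using closure of $Y$ under $A_Y$ together with $\F_p$-linearity of $Y$. Write $q_1:=|F|$, $s_1:=\dim_F Y\geq 1$, $m_1:=[\F_q:F]\geq 2$, and $b_1:=\log_p q_1$, so $a=b_1 m_1$ and $|M|=q_1^{s_1}$. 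Since $M\leq N$ is semi-regular on $\V$, both $\ga$ and $\ov\ga$ are unions of $M$-orbits, giving $\min(k,q-k)\geq q_1^{s_1}$.

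Let $\pi\colon X\to X/N\cong AB$ be the projection. Strong incidence-transitivity gives $|G_\ga|=k|G_{\ga,0}|$ with $|G_{\ga,0}|\geq q-k$, and $|G_\ga|=|M|\cdot|\pi(G_\ga)|$ since $G_\ga/M\cong\pi(G_\ga)$. Conjugation by $(n,h)\in G_\ga$ sends $(y,1)\in M$ to $(hy,1)$; since $M\triangleleft G_\ga$, this forces $hY=Y$ for every $h\in\pi(G_\ga)$, so $\pi(G_\ga)\leq(AB)_Y$. As $(AB)_Y\cap A=F^*$ and $(AB)_Y/F^*$ embeds into $B$ of order $a$, we have $|\pi(G_\ga)|\leq(q_1-1)a$, whence $k(q-k)\leq q_1^{s_1}(q_1-1)a$. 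Taking $k=q_1^{s_1}$ reduces this to
\[
q_1^{m_1}-q_1^{s_1}\leq(q_1-1)b_1 m_1,
\]
and a short case check (using the preliminary bound $q_1\leq b_1 m_1$ obtained by setting $s_1=m_1-1$) shows the only solutions with $1\leq s_1<m_1$ are $(q_1,s_1,m_1)\in\{(2,1,2),(4,1,2)\}$, giving $q=4$ and $q=16$ respectively.

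For $q=4$: the inequality is tight at $k=2$, forcing $\si(Y)=Y$, so $Y=\F_2$ and $\ga=Y=\F_2$, yielding case (ii). For $q=16$: $Y$ is a $1$-dimensional $\F_4$-subspace; since $G_0$ contains the unique order-$15$ subgroup $A$ of $\GaL(1,16)$, and $A$ acts transitively on the five such subspaces of $\F_{16}$, we may conjugate $\ga$ by an element of $G$ to assume $Y=\F_4$. The inequality $k(16-k)\leq 48$ then permits $k\in\{4,12\}$ (the middle value $k=8$ is excluded by $k(16-k)=64>48$): if $k=4$ then $\ga=Y=\F_4$; if $k=12$ then $\ov\ga$ is a single $\F_4$-coset, and $N$-translation (which preserves $\Ga$) sends it to $\F_4$, so $\Ga$ contains a codeword $\ga'$ with $\ov{\ga'}=\F_4$. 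Direct computation of $G_\ga=M\cdot(F^*\cdot B)$ gives $G_\ga\cong[2^2].\GL(1,4).[4]$ of order $48$, and checking the orbit structure yields $\de(\Ga)=3$, completing case (iii). The principal obstacle is the enumeration step for the arithmetic inequality, a finite but delicate check verifying that no case with $m_1\geq 3$ or larger $q_1$ admits a solution.
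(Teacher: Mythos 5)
Your proof is correct, and its core is genuinely different from the paper's. The paper controls $|G_\ga|/|M|$ with Zsigmondy's theorem, applied twice: a primitive prime divisor $r$ of $p^a-1$ forces $|G_\ga|/p^s\leq (p^a-1)a/r<p^a-1$ and restricts $m$ to $\{1,p^{a-s}-1\}$, and then a primitive prime divisor of $p^{a-s}-1$ eliminates $a\geq 5$, with $a\in\{2,3,4\}$ and $q=64$ handled separately. You instead confine $\pi(G_\ga)$ to the full setwise stabiliser $(AB)_Y$ of the $M$-orbit $Y$ in $\GaL(1,q)$, and the identity $(AB)_Y\cap A=F^*$ (for the possibly larger subfield $F=\F_p[A_Y]\supseteq K$) converts everything into the single inequality $q_1^{m_1}-q_1^{s_1}\leq (q_1-1)b_1m_1$; since the left side is at least $q_1^{m_1-1}(q_1-1)$ while the right side grows only linearly in $b_1m_1=a$, one uniform check yields $q\in\{4,16\}$. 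This buys a shorter, exception-free reduction (no special treatment of $q=64$ or of small $a$), at the modest cost of observing that all of $\pi(G_\ga)$, not merely $G_\ga\cap A$, preserves $Y$. I verified the enumeration: $s_1\leq m_1-1$ gives $q_1^{m_1-1}\leq b_1m_1$, and with $q_1=p^{b_1}\geq 2^{b_1}$ the only solutions are $(q_1,s_1,m_1)=(2,1,2)$ and $(4,1,2)$.

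Two small repairs to the endgame. For $q=4$, tightness does not force $\si(Y)=Y$: each of the three subspaces $Y=\{0,y\}$ has stabiliser $(AB)_Y=\{1,y^{-1}\si\}$ of order $2$, so the bound is tight for all of them. The correct fix is the one you already use at $q=16$: $G_0$ is transitive on $\V\setminus\{0\}$, so $\ga$ is $G$-conjugate to $\F_2$ (and then $\Ga=\binom{\V}{2}$, whence $\delta(\Ga)=1$). Second, ``checking the orbit structure yields $\delta(\Ga)=3$'' should be an argument rather than an assertion; the paper's is two lines and worth importing: since $G_\ga$ is the full stabiliser of $\F_4$ in $\AGaL(1,16)$ one gets $|G|=960$ and $|\Ga|=20$, so $|\Ga|\binom{4}{2}=\binom{16}{2}$ shows each $2$-subset of $\V$ lies in a unique codeword, hence distinct codewords meet in at most one point and $\delta(\Ga)=4-1=3$.
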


\begin{proof}
Interchanging $\ga$ and $\ov{\ga}$ if necessary, we may assume that 
$0\in\ga$.
Now we use arithmetic. We have $|M|=p^s$, where $1\leq s<a$ by Lemma~\ref{kvecspace}.  
As $M$ is semiregular on $\V$,
$k=|\ga|=p^sm$ and $|\bar\ga|=p^a-p^sm$, for some $m$ such that $1\leq m< p^{a-s}$. By assumption
$p^sm(p^a-p^sm)$ divides $|G_\ga|$ which divides $p^s(p^a-1)a$.
Thus $p^sm(p^{a-s}-m)$ divides $|G_\ga|/p^s$ which divides $(p^a-1)a$.
In particular $p^s$ divides $a$, so $a\geq p\geq2$.

Suppose that $a=2$. Then also $p^s=2$, so $q=4$, $k=2$,
$G_\ga\cap A=1$, $G_\ga=M B\cong Z_2^2$, and hence $\ga=\{0,1\}=\F_2$, 
$\delta(\Ga)=1$, and  (ii) of 
Proposition~{\rm\ref{aff1}} holds. If $p^a=2^6$ then since $p^s$ divides $a=6$, it 
follows that $p^s=2$ and so $|M|=2$. Since $M$ is $G_\ga$-invariant we have $G_\ga\cap A=1$. Thus 
$m(64-2m)$ divides $|G_\ga|/2$ which divides $6$, a contradiction.
Hence we may assume that $a\geq3$ and $p^a\ne 64$. 

In particular 
$p^a-1$ has a primitive prime divisor, say $r$, 
%where $r\equiv 1\pmod{a}$. 
and as noted above, $r\geq a+1$. If $|G_\ga\cap A|$ were divisible by $r$ then the subfield $K$ would be equal to $\F_q$, contradicting Lemma~\ref{kvecspace}. Hence $r$ does not divide $|G_\ga\cap A|$ and so
$m(p^a-p^sm)\leq |G_\ga|/p^s\leq\frac{p^a-1}{r}a<p^a-1$.  
The function $x(p^a-p^sx)$ is continuous on the
interval $(1,p^{a-s})$ with a maximum at $x=p^{a-s}/2$. 
If $2\leq m\leq p^{a-s}-2$, then $p^a-1>m(p^a-p^sm)\geq 2(p^a-2p^s)$
implying that $p^a<4p^s-1<4p^s$. Hence $p^{a-s}<4$ so that $s=a-1$, which contradicts the fact that $p^s=p^{a-1}$ divides $a$ with $a\geq3$. Thus $m=1$ or $p^{a-s}-1$,
and so $\{|\ga|,|\bar\ga|\}=\{p^s,p^{a}-p^s\}$, and $p^{a-s}-1$ divides $\frac{p^a-1}{r}\frac{a}{p^s}$.

If $a=3$ then its divisor $p^s$ is also equal to 3, and the divisibility 
condition is that $p^{a-s}-1=8$ divides $26/r$, which is impossible. 
If $a=4$ then $p^s=2$ or 4, and the divisibility condition is `7 divides $30/r$' or 
`3 divides $15/r$' respectively. It follows that $p^s=4$, $q=16$, and $12$ divides 
$|G_\ga|/4$, so that $G_\ga = M(\la x^5\ra .B)$. Hence the subfield $K=\F_4$ and 
the two orbits of $G_\ga$ in $\V$ are $\F_4$ (which must equal $\ga$ since $0\in
\ga$) and its complement. Moreover $G_{\ga,0}= \la x^5\ra .B$ is transitive on 
$\bar\ga=\V\setminus\F_4$, so we have an example. Now $G_\ga$ induces a 2-transitive action on $\ga$, 
and the stabiliser $G_{\ga,\{0,1\}}$ has order $8$. It follows that $G_{\ga,\{0,1\}}= G_{\{0,1\}}$,
and hence that $\ga$ is the only `codeword' containing $\{0,1\}$, so 
$\delta(\Ga)=3$ and (iii) of Proposition~\ref{aff1} holds. 

Thus we may assume that $a\geq5$. The facts that $p^s$ divides $a$ and $a\geq5$ together
imply that $a-s\geq4$. Hence either (i) $p=2, a=8, s=2$ and $p^{a-s}-1=2^6-1=63$, or (ii) $p^{a-s}-1$ has 
a primitive prime divisor $r'$. Case (i) is impossible since $63$ does not divide $a(p^a-1)$. Thus we are in case (ii) and the prime $r'$ divides $\frac{p^a-1}{r}\frac{a}{p^s}$. Suppose first that $r'$ divides $a/p^s$. Then $a\geq r'p^s\geq (a-s+1)p^s$, and hence 
\[
a\leq\frac{p^s(s-1)}{p^s-1}=(s-1)(1+\frac{1}{p^s-1})\leq 2(s-1)<2^s\leq p^s\leq a 
\]
which is a contradiction. Hence $r'$ divides $p^a-1$. This implies that $a-s$ divides $a$, and hence $a-s\leq a/2$ so $s\geq a/2\geq p^s/2\geq 2^{s-1}$. It follows that either $a=p=2s=2$ or $a/2=p=s=2$, contradicting the assumption that $a\geq5$. 
\end{proof}

Proposition~\ref{aff1} follows from Lemmas~\ref{k1} and \ref{aff1ex}. 

\subsection{General affine case}\label{sub-aff2}

Now suppose that $G=N.L\leq X=\AGaL(n,q)$ with $q=p^a$ and $n\geq2$, 
acting on $\V=\F_q^n$ and that $G_\ga$ is transitive on $\ga\times\bar\ga$. 
The affine subspaces and their complements provide natural families of 
examples, since taking $G=X$ and $\ga$ or $\ov{\ga}$ an affine subspace, 
the group $G_\ga$ is transitive on $\ga\times\ov{\ga}$.

\begin{example}\label{ex-aff2}
{\rm 
For any positive integer $s<n$, the set $\Ga$ of affine $s$-dimensional 
subspaces, and the set $\ov{\Ga}$ of complements of these $s$-subspaces, 
are $X$-xtrongly incidence-transitive codes. 
}
\end{example}

Our main result for the affine case shows that examples apart from 
those in Example~\ref{ex-aff2} are very restricted. In particular, 
the codeword $\ga$ or its complement is a subset of class $[0,\sqrt{q},q]_1$
(as defined before Theorem~\ref{thm-al}) and $q$ must be 4 or 16.

\begin{proposition}\label{aff2}
Suppose that $\V=\F_q^n$ with $n\geq2$, and $\Ga\subset\binom{\V}{k}$
is $G$-strongly incidence-transitive, where $G\leq\AGaL(n,q)$ is $2$-transitive on $\V$. 
Let $\ga\in\Ga$. Then one of the following holds.
\begin{enumerate}
 \item[(i)] $\ga$ or $\ov{\ga}$ is an affine subspace as in Example~{\rm\ref{ex-aff2}}, or
\item[(ii)] $q=4$ and  each line of $\AG(n,4)$ either 
lies in $\ga$ or $\ov\ga$, or intersects $\ga$ in a Baer sub-line. Moreover 
$\frac{q^n+2}{3}\leq k\leq\frac{2(q^n-1)}{3}$.
\item[(iii)] $q=16$ and, interchanging $\ga$ and $\ov\ga$ if necessary, 
each line of $\AG(n,q)$ either lies in $\ga$ or $\ov\ga$, or intersects 
$\ga$ in a Baer sub-line of size $4$. Moreover 
$\frac{q^n+4}{5}\leq k\leq\frac{4(q^n-1)}{15}$.
\end{enumerate}
\end{proposition}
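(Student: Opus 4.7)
The plan is to exploit $G_\gamma$-transitivity on $\gamma\times\ov\gamma$ in order to constrain how $\gamma$ intersects the affine lines of $\V$, and then to reduce the $n$-dimensional problem to the one-dimensional classification obtained in Proposition~\ref{aff1}. Since each ordered pair $(\u,\w)\in\gamma\times\ov\gamma$ determines a unique line of $\AG(n,q)$, $G_\gamma$ is transitive on the set of \emph{mixed} lines (those meeting both $\gamma$ and $\ov\gamma$), so there is a constant $x$ with $1\le x\le q-1$ such that $|\ell\cap\gamma|=x$ for every mixed line $\ell$; i.e.\ $\gamma$ is of class $[0,x,q]_1$. The extremal cases are then easy: if $x=1$ then any line meeting $\gamma$ in two or more points is non-mixed and hence lies in $\gamma$, so $\gamma$ is closed under $\F_q$-lines and is therefore an affine subspace, giving (i); the case $x=q-1$ is symmetric, with $\ov\gamma$ an affine subspace.

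For $2\le x\le q-2$, I would fix a mixed line $\ell$, form the group $H\le\AGaL(1,q)$ induced on $\ell$ by the setwise stabiliser $G_\ell$, and set $\Gamma':=(\gamma\cap\ell)^H\subseteq\binom{\ell}{x}$. Since $G$ is $2$-transitive on $\V$, $H$ is $2$-transitive on $\ell$; and since the restriction $G_{\gamma,\ell}|_\ell$ is transitive on $(\gamma\cap\ell)\times(\ell\setminus\gamma)$ (directly from $G$-strong incidence-transitivity), $\Gamma'$ is $H$-strongly incidence-transitive in the one-dimensional sense of Subsection~\ref{sub-aff1}. Applying Proposition~\ref{aff1} to $(\ell,H,\Gamma')$ forces $(q,x)\in\{(4,2),(16,4),(16,12)\}$ and $\gamma\cap\ell$ (or its complement in $\ell$, when $x=12$) to be a coset of $\F_{\sqrt{q}}$, i.e.\ a Baer sub-line. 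Replacing $\gamma$ by $\ov\gamma$ when $q=16$ and $x=12$, we may assume $x=\sqrt{q}$, which establishes the geometric parts of (ii) and (iii). The bounds on $k$ then come from a two-way count at a single point: fixing $\u\in\gamma$ and letting $a,b$ denote the numbers of lines through $\u$ respectively contained in $\gamma$ or mixed, one has $(q-1)a+(x-1)b=k-1$ and $a+b=(q^n-1)/(q-1)$, and non-negativity of $a$ forces the stated lower bound on $k$; the analogous computation with $\u\in\ov\gamma$ forces the matching upper bound.

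The main obstacle is carrying out the one-dimensional reduction cleanly: one has to verify that $(\ell,H,\Gamma')$ really does fit the hypotheses of Proposition~\ref{aff1}, in particular that $\Gamma'$ qualifies as an \emph{$H$-strongly incidence-transitive code} even in the degenerate case $(q,x)=(4,2)$ where $\Gamma'$ coincides with $\binom{\ell}{2}$ and so fails to be a proper subset of $\binom{\ell}{x}$. This is precisely why Subsection~\ref{sub-aff1} was deliberately set up without the properness hypothesis.
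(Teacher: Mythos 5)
Your proposal is correct and follows essentially the same route as the paper: constancy of the intersection number $x$ on mixed lines via transitivity on $\ga\times\ov\ga$, reduction of the induced action on a mixed line to the one-dimensional case (which is exactly why Proposition~\ref{aff1} was stated without the properness hypothesis), and a point-count at a point of $\ga$ and of $\ov\ga$ to obtain the two bounds on $k$. The only (harmless) variations are that you obtain transitivity of $G_{\ga,\ell}|_\ell$ on $(\ga\cap\ell)\times(\ell\setminus\ga)$ directly from the fact that two points determine a line, where the paper uses the line-partition of $\ov\ga$, and that your bounds come from non-negativity of the number of lines through a point contained in $\ga$ (resp.\ $\ov\ga$) rather than from an orbit-containment count — these yield identical inequalities.
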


\begin{proof}
Since $G_\ga$ is transitive on $\ga\times\ov\ga$, it follows that 
$G_\ga$ is transitive on the set $\calL$ of lines of the affine space
$\AG(n,q)$ that meet both $\ga$ and $\bar\ga$. Thus for $\lambda\in\calL$,
$|\ga\cap\lam|=x$ is independent of the choice of $\lam$, and 
$|\bar\ga\cap\lam|=q-x$ with $1\leq x<q$. 
% Interchanging $\ga$ and 
% $\bar\ga$ if necessary we may assume that $x\leq q/2$, and we note 
% that we may not now have $k\leq v/2$.

Moreover, the group induced on $\lam$ by $G_{\ga,\lam}$ is a subgroup of
$\AGaL(1,q)$. Let $\v\in\ga\cap\lam$. Then $G_{\ga,\v}$ is transitive
on $\bar\ga$, and moreover the subset of lines of $\calL$ containing $\v$
induces a $G_{\ga,\v}$-invariant partition of $\bar\ga$ into parts of size $q-x$.  
Hence $G_{\ga,\v,\lam}$ is transitive on $\bar\ga\cap\lam$, and similarly, for
 $\u\in\bar\ga\cap\lam$, 
$G_{\ga,\u,\lam}$ is transitive on $\ga\cap\lam$. Thus the
subgroup of $\AGaL(1,q)$ induced by $G_{\ga,\lam}$ on $\lam$
is transitive on $(\ga\cap\lam)\times(\bar\ga\cap\lam)$. It follows from
Proposition~\ref{aff1} that one of $x\in\{1,q-1\}$, or 
$q=4$ with $x=2$, or $q=16$ with $x\in\{4, 12\}$. 

Suppose first that $x\in\{1, q-1\}$. Interchanging $\ga$ and $\ov\ga$ 
if necessary, we may assume that $x=1$. Then, for any pair of distinct points
$\v, \v'\in\ga$, the unique line $\lambda$ containing $\v$ and $\v'$
lies entirely within $\ga$. Thus $\ga$ (or the original $\ov\ga$) 
is an affine subspace of $\AG(n,q)$, as in Example~\ref{ex-aff2}.

Now we consider the other possibilities. Interchanging $\ga$ and $\ov\ga$ 
if necessary, we may assume that $q=x^2\in\{4, 16\}$. Then the subset $\calL'$ 
of $\calL$ consisting of lines containing a given point $\v\in\ga$ induces a 
partition of $\bar\ga$ with $(q^n-k)/(q-x)$ parts of size $q-x$, and $G_{\ga,\v}$ 
is transitive on $\calL'$. Each line of $\calL'$ intersects $\ga$ in a subset of 
size $x$ which, by Proposition~\ref{aff1} is a Baer sub-line. The additional 
$(x-1)(q^n-k)/(q-x)$ points of $\ga$ lying on 
these lines (apart from $\v$) forms a $G_{\ga,\v}$-orbit. Thus $k\geq 1+(x-1)(q^n-k)/(q-x)$,
and rearranging gives $k\geq \frac{(x-1)q^n+q-x}{q-1}$ which, since $q=x^2$, 
gives $k\geq \frac{q^n+x}{x+1}$. 

Similarly,  the subset $\calL''$ of $\calL$ consisting of lines containing 
a given point $\v'\in\bar\ga$ induces a partition of $\ga$ with 
$k/x$ parts of size $x$, and $G_{\ga,\v'}$ is transitive on $\calL''$. 
Each line of $\calL''$ intersects $\bar\ga$ in a subset of size $q-x$. 
The additional $(q-x-1)k/x$ points of $\bar\ga$ lying on 
these lines (apart from $\v'$) form a $G_{\ga,\v'}$-orbit. 
Thus $|\ov{\ga}|=q^n-k\geq 1+(q-x-1)k/x$, and rearranging gives $k\leq \frac{x(q^n-1)}{q-1}$.
For $q=4$ we therefore have $\frac{q^n+2}{3}\leq k\leq \frac{2(q^n-1)}{3}$ and for $q=16$ we have 
$\frac{q^n+4}{5}\leq k\leq \frac{4(q^n-1)}{15}$, and parts (ii) and (iii) hold.
\end{proof}

We note that the $2$-transitive hyperoval in $\PG(2,4)$ provides an 
example for case (ii) of Proposition~\ref{aff2}.

\begin{example}\label{ex-aff4}{\rm
Let $\ga$ be a $2$-transitive hyperoval in the projective
plane $\PG(2,4)$, and let $\lam$ be an external line of $\ga$.
Then $k=|\ga|=6$ and
the complement of $\lam$ in the point set of $\PG(2,4)$
is an affine space $\V=\AG(2,4)$ containing $\ga$. Let $\bar\ga=
\V\setminus\ga$. Then the subgroup $G_\ga$ of $\PGaL(3,4)$
stabilising $\lam$ and $\ga$ setwise acts faithfully on $\V$
and is transitive on $\ga\times\bar\ga$.

To see this observe that $G_\ga=\SL(2,4)\la\si\ra\cong S_5$
is $2$-transitive on $\ga$, and for $\v\in\ga$, 
$G_{\ga,\v}$ acts transitively on the five secant
lines to $\ga$ containing $\v$. Each of these 
secants contains two points of $\bar\ga$ and one point of $\lam$.
Thus $G_{\ga,\v}$ is transitive on $\bar\ga$. 
}
\end{example}

\section{Linear case}\label{sec-linear}

In this section we investigate the 2-transitive projective linear groups. 
Here $\V$ is the point set of the projective geometry $\PG(n-1,q)$ of 
rank $n-1\geq1$ over a field $\F_q$ of order $q=p^a$, and $v= 
(q^n-1)/(q-1)$, where $p$ is a prime and $a\geq1$. Since the situation for affine 2-transitive 
groups was analysed in Section~\ref{sec-aff}, 
we assume that $(n,q)\ne (2,2)$ or $(2,3)$.
In general the group $G$ satisfies $\PSL(n,q)\leq G\leq X:=\PGaL(n,q)$ 
and we often assume that $G=X$.  We use the notation introduced in 
Section~\ref{sect:organisation}. Since $G$ is $2$-transitive and $\Ga$ is a proper subset of 
$\binom{\V}{k}$, we have in particular, $3\leq k\leq v-3$ and, 
for some $k$-subset $\ga\subset\V$, the stabiliser $G_\ga$ is transitive 
on $\ga\times\bar\ga$.

%-------------------------------------

\subsection{Rank 1 case}\label{sub-linrank1}
Here $n=2$, $q\geq4$, and we identify $\V=\PG(1,q)$ with $\F_q\cup\{\infty\}$. We show that all examples arise from Baer sub-lines of $\V$. If $q=q_0^2$ then the subset $\F_{q_0}\cup\{\infty\}$, and its $X$-translates are the \emph{Baer sub-lines}.

\begin{example}\label{ex-lin-2dim}{\rm
Let $q=q_0^2$, and let $\ga=\F_{q_0}\cup\{\infty\}$, 
%a Baer sub-line of $\V=\PG(1,q)$, 
so $k=|\ga|=q_0+1$. Then the group $X_{\ga}=N_X(\PGL(2,q_0))$
is transitive on $\ga\times\bar\ga$. Moreover, since any pair 
of Baer sub-lines intersects in at most one point, it follows 
that the corresponding strongly incidence-transitive code $\Ga$ 
has minimum distance $\de(\Ga)=q_0$.
}
\end{example}

\begin{proposition}\label{lem-2dim}
If $n=2$ and $3\leq k\leq q-2$, then $q=q_0^2$, $k=q_0+1$ or $q-q_0$, and $\ga$ or $\ov\ga$ 
is a Baer sub-line, as in Example~{\rm\ref{ex-lin-2dim}}.
\end{proposition}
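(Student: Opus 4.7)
Set $H := G_\ga$. The hypothesis of $G$-strong incidence-transitivity says $H$ is transitive on $\ga \times \ov\ga$; equivalently, $H$ is transitive on $\ga$, and for each $u \in \ga$ the stabiliser $H_u$ is transitive on $\ov\ga$ (and symmetrically, $H_w$ is transitive on $\ga$ for each $w \in \ov\ga$). In particular $k(q+1-k)$ divides $|H|$, and $|H|$ divides $|\PGaL(2,q)| = aq(q-1)(q+1)$. The goal is to show $q = q_0^2$ and, after possibly replacing $\ga$ by $\ov\ga$, that $\ga$ is the Baer sub-line $\PG(1,q_0)$ of size $q_0+1$, matching Example~\ref{ex-lin-2dim}.

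My plan is to analyse the action of $H_u$ on the affine chart $\V\setminus\{u\} \cong \F_q$ for a choice of $u \in \ga$, mirroring the strategy of Section~\ref{sub-aff1}. Since $G_u$ is a $2$-transitive subgroup of $\AGaL(1,q)$, the subgroup $H_u \leq G_u$ has a translation part $M_u := H_u \cap N$ which, by the argument of Lemma~\ref{kvecspace}, is a $K$-subspace of $\F_q$ for a subfield $K \subseteq \F_q$. The transitive orbit $\ov\ga \subset \F_q$ of size $q+1-k$ is then a union of $M_u$-cosets, so $|M_u|$ divides $q+1-k$; the invariant complement $\ga\setminus\{u\}$ of size $k-1$ is correspondingly a union of $M_u$-cosets. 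Running the analogous analysis from the dual chart at some $w \in \ov\ga$ (exploiting that $H_w$ is transitive on $\ga$) gives compatible information on a translation part $M_w$, and the combination with the divisibility $k(q+1-k) \mid |H|$ constrains both $q$ and $k$ severely.

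I expect the combined arithmetic and transitivity constraints to force $M_u$ to be the additive group of a subfield $\F_{q_0} \subseteq \F_q$, which realises $\ga$ (or $\ov\ga$) after translation as $\PG(1,q_0)$; the transitivity of $H$ on $\ga\times\ov\ga$ then requires $|\PGL(2,q_0)| \geq k(q+1-k)$ up to a field-automorphism factor of at most $a/a_0$, and a short arithmetic check rules out $[\F_q:\F_{q_0}] \geq 3$, leaving $q = q_0^2$, $k = q_0+1$, and $\de(\Ga) = q_0$ as in Example~\ref{ex-lin-2dim}. The main obstacle is that the affine code $\{\ov\ga\}$ in $\F_q$ need \emph{not} itself be strongly incidence-transitive under $H_u$ — the $\PG(1,9)$ Baer sub-line is a concrete instance where Proposition~\ref{aff1} cannot be invoked directly, because $H_{u,w}$ is not forced to be transitive on $\ga\setminus\{u\}$ — so one cannot simply quote that proposition. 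The remedy is to keep both the $u$-side and $w$-side affine information in play simultaneously, and use it in tandem with Dickson's classification of the subgroups of $\PSL(2,q)$ to eliminate the subfield towers $q = q_0^m$ with $m \geq 3$ together with the small exceptional subgroups $A_4, S_4, A_5$ and any enlargement inside $\PGaL(2,q)$ by field automorphisms.
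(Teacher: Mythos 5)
You have correctly identified the genuine difficulty --- the induced data on the affine chart at $u\in\ga$ is only one-sided (in your $\PG(1,9)$ example $H_{u,w}$ has order $2$ and so cannot be transitive on $\ga\setminus\{u\}$), which is why Proposition~\ref{aff1} cannot simply be quoted --- but your plan does not close the step that is the actual content of the proposition. What the chart legitimately gives is that $M_u=H_u\cap N$ is normal in $H_u$, so $\ga\setminus\{u\}$ and $\ov\ga$ are unions of $M_u$-cosets and $|M_u|$ divides $\gcd(k-1,q)$, together with the dual statement at $w\in\ov\ga$. From there, the claims that $M_u\neq 1$, that $\ga\setminus\{u\}$ is a \emph{single} $M_u$-coset, and that this coset is an affine image of a subfield are all asserted (``I expect\dots'') rather than derived, and the arithmetic actually in hand ($k(q+1-k)$ divides $|H|$ and $|H|\leq k\,|M_u|(q-1)a$) does not force them: it leaves alive, for instance, the configurations with $H\cap\PGL(2,q)$ inside the normaliser $D_{2(q+1)}$ of a non-split torus, where every $M_u$ is trivial and which your plan never addresses. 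Your proposed fix is to invoke Dickson's classification ``in tandem'', but at that point you are enumerating the subgroups of $\PSL(2,q)$ anyway, and the affine detour is carrying no weight.

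That enumeration is precisely the paper's proof, and it is more direct. After normalising $k\leq (q+1)/2$, the two-orbit condition with both orbits of size at least $3$ excludes the parabolic subgroups and $D_{2(q-1)}$ at once; the case $G_\ga\cap\PGL(2,q)\leq D_{2(q+1)}$ forces $k=(q+1)/2$ and then fails the divisibility $k(v-k)\mid|G_\ga|$; if $G_\ga\leq N_X(\PSL(2,q_0))$ with $q=q_0^b$, then the sub-line $\PG(1,q_0)$ is a $G_\ga$-invariant set of size less than $v/2$, hence a union of $G_\ga$-orbits, hence equal to $\ga$ --- the sub-line is handed to you by the subgroup classification, with no need to reconstruct it from translation cosets --- after which $(q_0+1)(q-q_0)\mid q_0(q_0^2-1)a$ gives $b=2$ (this endgame agrees with yours and is correct); finally $A_5$, $S_4$, $A_4$ die on $3(q-2)\leq|G_\ga|$ together with exact divisibility. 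If you want to salvage your outline, reorganise it as a dichotomy on whether $H$ contains a nontrivial unipotent element: if not, Dickson confines $H\cap\PSL(2,q)$ to a torus normaliser or to $A_4$, $S_4$, $A_5$ and the arithmetic finishes; if so, Dickson already places $H$ in the normaliser of a subfield subgroup and you proceed as above.
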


\begin{proof}
We use the classification of the subgroups of $S:=\PSL(2,q)$, \cite[Chapter VII]{Dickson}. 
Replacing $\ga$ by $\ov\ga$ if necessary, we may assume that $3\leq k\leq v/2=(q+1)/2$.
It follows in particular that $G_\ga$ is not contained in a maximal parabolic subgroup, 
and $G_\ga\cap {\rm PGL}(2,q)\not\leq D_{2(q-1)}$. Suppose  that $G_\ga\cap {\rm PGL}(2,q)\leq D_{2(q+1)}$. Since the Frobenius automorphism fixes the identity, 
the only way that $G_\ga$ can have two orbits is if
$k=(q+1)/2$. Then $(q+1)^2/4$ divides $|G_\ga|$, 
which divides $(q+1)a$. There 
are no possibilities with $q\geq4$. 

Now suppose that $G_\ga\leq N_X(\PSL(2,q_0))$, where $q=q_0^b$ for some $b>1$. 
Then $G_\ga$ fixes setwise a sub-line $\PG(1,q_0)$ of size $q_0+1<v/2$.
Since the only $G_\ga$-orbit length at most $v/2$ is $k$, we have
that $\ga$ is the set of points on a sub-line
${\rm PG}(1,q_0)$ with $k=q_0+1$, and hence $(q_0+1)(q-q_0)$ divides 
$|G_\ga|$, which divides $q_0(q_0^2-1)a$. Thus $q_0^{b-1}-1$ divides $(q_0-1)a$, which implies 
$b=2$. Thus $\ga$ is a Baer sub-line, as in Example~\ref{ex-lin-2dim}.

The remaining cases are those where $H:=G_\ga\cap{\rm PGL}(2,q)=A_5, S_4$ 
or $A_4$, and $H$ is not contained in any `subfield' subgroup $\PGL(2,q_0)$ with $q_0<q$. 
Suppose first that $H=A_5$. Then $q\equiv \pm1\pmod{10}$, and 
since $G_\ga$ is not contained in a subfield subgroup, $a\leq2$.
Since $3(q-2)\leq k(v-k)\leq |G_\ga|\leq 60a$, we have $q\in \{9, 11, 19\}$.
The cases $q=9$ and $q=11$ are not possible since in these cases $A_5$ is transitive 
on $\V$. Thus $q=19$. However there is no value of $k$ in the interval $[3,9]$ such that $k(20-k)$ divides $|G_\ga|=60$.

This leaves $A_4\leq G_\ga\leq S_4.\la\si\ra$ with $q$ odd. Since $G_\ga$ is not contained in a subfield subgroup, either $q=p$, or $q=p^2$ with $p\equiv \pm 3\pmod{8}$.
Since $3(q-2)\leq k(v-k)\leq |G_\ga|\leq 24a$, we have $q\in \{5,7,9\}$, and since $k(v-k)$ divides $|G_\ga|$, it follows that $q=9$ and $k=4$. However in this case $G_\ga$ is contained in $N_X(\PSL(2,3))$ which we assumed was not the case.
\end{proof}

%--------------------------------

\subsection{Higher rank linear case}\label{sub-lin2}

Now we assume that $n\geq3$. Here we have a family of examples arising 
from subspaces and their complements. 

\begin{example}\label{ex-lin1}
{\rm Let $1\leq s<n$ and let $\ga=\PG(s-1,q)$ be an $(s-1)$-dimensional subspace of  
$\V$, so $k=|\ga|=(q^s-1)/(q-1)$. Then the subgroup $X_{\ga}$ is transitive 
on $\ga\times\ov \ga$. Thus the set $\Ga$ of 
$(s-1)$-dimensional subspaces, and the set of their complements, 
form $X$-strongly incidence-transitive 
codes and each has minimum distance $\de(\Ga) =q^{s-1}$. }
\end{example}

For $u\in\ga, w\in\bar\ga$, we call the line $\lam(u,w)$ containing $u$ and 
$w$  a \emph{$\ga$-shared line}. Since $G_\ga$ is transitive on $\ga\times
\ov\ga$, the $\ga$-shared lines form a single $G_\ga$-orbit on lines, and 
in particular they all meet $\ga$ in a constant number $x$ of points, 
where $1\leq x\leq q$. Thus $\ga$ is a subset of class $[0,x,q+1]_1$. 
In Example~\ref{ex-lin1},  $x=1$.  

\begin{proposition}\label{lem-lin2}
Suppose that $\V=\PG(n-1,q)$, $\PSL(n,q)\leq G\leq\PGaL(n,q)$, 
and $\Ga\subset\binom{\V}{k}$ is $G$-strongly incidence-transitive, where 
$n\geq3$ and $3\leq k\leq |\V|-3$. 
Let $\ga\in\Ga$. Then  either $\ga$ or $\ov\ga$ is a projective subspace 
as in Example~{\rm\ref{ex-lin1}} or, interchanging $\ga$ and $\bar\ga$ if necessary,
$\ga$ is a subset of class $[0,x,q+1]_1$, where 
one of the following holds. 
\begin{enumerate}
\item[(a)] $x=2$ and $\frac{v-1}{q}+1\leq k\leq \frac{2(v-1)}{q}$, or
\item[(b)] $x=q_0+1$, $q=q_0^2$, $\frac{v-1}{q_0} +1\leq k\leq \frac{v-1}{q_0}+\frac{v-1}{q}$ and, for  each $\ga$-shared line $\lam$,  $\lam\cap \ga$ is a Baer sub-line.
\end{enumerate}
\end{proposition}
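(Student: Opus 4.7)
The plan is to reduce to the 1-dimensional projective case on each $\ga$-shared line, mimicking the strategy of Proposition~\ref{aff2}. Since $G_\ga$ is transitive on $\ga\times\ov\ga$, it is also transitive on the set $\calL$ of $\ga$-shared lines, so there is a uniform intersection size $x=|\lam\cap\ga|$ for $\lam\in\calL$ with $1\le x\le q$; thus $\ga$ is of class $[0,x,q+1]_1$. If $x=1$ then any line meeting $\ga$ in two or more points lies entirely in $\ga$, forcing $\ga$ to be a projective subspace as in Example~\ref{ex-lin1}; the case $x=q$ is dual. So I would then assume $2\le x\le q-1$.

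Next I would reduce to a 1-dimensional line problem. For $\u\in\ga$, the $L_2=(v-k)/(q+1-x)$ shared lines through $\u$ partition $\ov\ga$ into blocks of size $q+1-x$, and because $G_{\ga,\u}$ is transitive on $\ov\ga$ and preserves this partition, the block-stabiliser $G_{\ga,\u,\lam}$ acts transitively on $\lam\cap\ov\ga$ by the standard imprimitivity argument. Symmetrically, for $\w\in\ov\ga$, $G_{\ga,\w,\lam}$ is transitive on $\lam\cap\ga$. Combining these two facts, the group $H$ induced on $\lam$ by $G_{\ga,\lam}$, a subgroup of $\PGaL(2,q)$, is transitive on $(\lam\cap\ga)\times(\lam\cap\ov\ga)$. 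Applying (a version of) Proposition~\ref{lem-2dim} to this action on $\lam\cong\PG(1,q)$ then yields that either $x\in\{2,q-1\}$, or $q=q_0^2$ and $x\in\{q_0+1,q-q_0\}$ with $\lam\cap\ga$ (or its complement in $\lam$) a Baer subline. Interchanging $\ga$ and $\ov\ga$ if necessary places us in case~(a) with $x=2$ or case~(b) with $x=q_0+1$.

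The numerical bounds on $k$ then follow by counting lines through a point. For $\u\in\ga$, the $(v-1)/q$ lines through $\u$ split into those contained in $\ga$ and the $L_2$ shared ones, so $L_2\le(v-1)/q$ gives $k\ge1+(v-1)/q$ in case~(a) and $k\ge1+(v-1)/q_0$ in case~(b). Dually, for $\w\in\ov\ga$, the $L_2'=k/x$ shared lines through $\w$ satisfy $L_2'\le(v-1)/q$, yielding the stated upper bounds $k\le2(v-1)/q$ and $k\le(v-1)/q_0+(v-1)/q$ respectively.

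The main obstacle is the application of Proposition~\ref{lem-2dim} to $H$: that proposition assumes $\PSL(2,q)\le G$, whereas $H$ is only known to be a subgroup of $\PGaL(2,q)$ with the product-transitivity property, and need not contain $\PSL(2,q)$. Overcoming this requires either generalising the subgroup-classification argument of Proposition~\ref{lem-2dim} to arbitrary subgroups of $\PGaL(2,q)$ that preserve a two-class partition of $\PG(1,q)$ and act transitively on the product of the parts, or exhibiting a sufficiently large natural subgroup inside $G_{\ga,\lam}$ (for instance via intersection with a Levi subgroup of the line-parabolic in $\PSL(n,q)$) that induces $\PSL(2,q)$ on $\lam$ and therefore permits the proposition to be quoted directly.
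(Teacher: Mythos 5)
Your proposal is essentially the paper's own proof: the same reduction to a uniform intersection number $x$ via transitivity on the $\ga$-shared lines, the same imprimitivity argument showing the group induced on a line $\lam$ is transitive on $(\lam\cap\ga)\times(\lam\cap\ov\ga)$, the same appeal to Proposition~\ref{lem-2dim}, and the same two counts (through a point of $\ga$ and a point of $\ov\ga$), which, although you phrase them as bounds on the number of shared lines rather than on the number of covered points, rearrange to exactly the paper's inequalities. The ``obstacle'' you flag is genuine in the sense that the paper quotes Proposition~\ref{lem-2dim} without comment, but its proof only ever uses that $G_\ga$ is a subgroup of $\PGaL(2,q)$ with two orbits and the product-transitivity property (never that $G\geq\PSL(2,q)$), so the first of your two proposed fixes is exactly what is needed and requires no new work.
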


%________________Rewrite remark and Q

\begin{remark}\label{rem-lin2}{\rm
The parameters in part (a) suggest that $\ga$ might be a configuration similar 
to an oval or hyperoval in $\PG(2,q)$. For example, in $\PG(2,4)$,
the stabiliser of a hyperoval $\ga$ is $G_\ga\cong S_5$, transitive
on both $\ga$ and the complement $\bar\ga$ with $|\ga|=6, |\bar\ga|=15$. However,
for $u\in\ga$, $G_{\ga,u}$ has two orbits in $\bar\ga$, one of 
them an external line to $\bar\ga$. Thus this does not give rise to a strongly incidence-transitive code.

This argument about $\PG(2,4)$, together with Proposition~\ref{lem-lin2}, were used by Nico Durante 
in~\cite[Theorem 3.3]{Dur} to prove that there are no additional examples in the linear case 
satisfying the conditions of Proposition~\ref{lem-lin2}~(a) or~(b).
% Again, one might hope that examples in case (a) arise from partial spreads, 
% but they seem not to: take $n$ even, let $U,U'$ be disjoint $\frac{n}{2}$-dimensional subspaces  
% of the underlying space $V(n,q)$, and let $\ga$ be the set of 1-spaces (points of $\PG(n-1,q)$) contained in either $U$ or $U'$. In $\PGaL(n,q)$ the stabiliser of $\ga$ (of type $\GL(n/2,q)\wr S_2$) is transitive on $\ga$ and on $\bar\ga$, but unfortunately it has two orbits in $\ga\times\bar\ga$. (Each $w\in\bar\ga$ corresponds to a vector in $V(n,q)$ of the form $u+u'$ with $u\in U$ and $u'\in U'$ both non-zero; the elements of  $\ga\times\bar\ga$ correspond to pairs $(u'', u+u')$ with $u''\in U$ nonzero, and $u''$ may be linearly dependent or independent of $u$.) 
% 
% Similarly, one might hope that Baer sub-geometries  provide examples in case (b), but they do not for ranks higher than 1. For if $n\geq3$, $q=q_0^2$, and $\ga=\PG(n-1,q_0)$, 
% a Baer sub-geometry  of $\V$, then all lines meeting $\ga$ are shared lines, but the lines of the Baer geometry meet $\ga$ in $q_0+1$ points, while lines which meet $\ga$ and do not belong to the Baer sub-geometry meet $\ga$ in a single point. 
}
\end{remark}

\begin{proof}
The group $G_\ga$ is transitive on the set $\calL$ of $\ga$-shared lines 
and, for $\lambda\in\calL$, the group induced on $\lam$ by $G_{\ga,\lam}$ is a subgroup of
$\PGaL(2,q)$, independent of the choice of $\lam$. Let $\u\in\lam\cap\ga$. Then $G_{\ga,\u}$ is transitive
on $\bar\ga$ and moreover the subset of lines of $\calL$ containing $\u$
induces a $G_{\ga,\u}$-invariant partition of $\bar\ga$ with parts of size $q+1-x$.  
Hence $G_{\ga,\u,\lam}$ is transitive on $\lam\cap\bar\ga$. Similarly, if
 $\u\in\lam\cap\bar\ga$, then 
$G_{\ga,\u,\lam}$ is transitive on $\lam\cap\ga$. Thus the
subgroup of $\PGaL(2,q)$ induced by $G_{\ga,\lam}$ on $\lam$
is transitive on $(\lam\cap\ga)\times(\lam\cap\bar\ga)$. It follows from
Proposition~\ref{lem-2dim} that, interchanging $\ga$ and $\bar\ga$ 
if necessary,  $(x,q+1-x)=(1,q)$, $(2,q-1)$, or $(q_0+1,q-q_0)$, 
where in the third case, $q=q_0^2$ and $\lam\cap\ga$ is a Baer sub-line of 
$\lam$. 

Suppose first that $x=1$. Then, for any pair of distinct points
$\u, \u'\in\ga$, the  line $\lambda$ containing $\u$ and $\u'$
lies entirely within $\ga$. Thus $\ga$ is a subspace of $\V$ as in Example~\ref{ex-lin1}.

Now suppose that $x=2$ or $x=q_0$. 
Then the subset $\calL'$ of $\calL$ consisting of lines 
containing a fixed point $\u\in\ga$ induces a partition of $\bar\ga$ with 
$(v-k)/(q+1-x)$ parts of size $q+1-x$, and $G_{\ga,\u}$ is transitive on $\calL'$. 
Each line of $\calL'$ intersects $\ga$ in a set consisting of $\u$ 
and $x-1$ further points. The  $(x-1)(v-k)/(q+1-x)$ points of $\ga$, 
distinct from $\u$, lying on these lines forms a $G_{\ga,\u}$-orbit
contained in $\ga\setminus\{\u\}$. Thus $k\geq 1+\frac{(x-1)(v-k)}{q+1-x}$,
and hence $k\geq \frac{(x-1)(v-1)}{q} +1$. 

Similarly, if $\u\in\bar\ga$, then the subset $\calL''$ of $\calL$  consisting of lines 
containing $\u$ induces a partition of $\ga$ with 
$k/x$ parts of size $x$, and $G_{\ga,\u}$ is transitive on $\calL''$. 
Each line of $\calL''$ intersects $\bar\ga$ in a set consisting of $\u$ 
and $q-x$ further points. The  $(q-x)k/x$ points of $\bar\ga$, distinct from $\u$, 
lying on these lines forms a $G_{\ga,\u}$-orbit. Thus $v-k\geq 1+\frac{(q-x)k}{x}$, 
and hence $k\leq \frac{x(v-1)}{q}$. This yields $\frac{v-1}{q}+1\leq k\leq 
\frac{2(v-1)}{q}$ if $x=2$ and $\frac{v-1}{q_0} +1\leq k\leq \frac{v-1}{q_0}+
\frac{v-1}{q}$ if $x=q_0+1$.
\end{proof}

\section{Suzuki, Ree and rank 1 Unitary groups}\label{sec-rank1}

In this section we treat the 2-transitive actions of Lie type groups $G$ 
of rank 1 apart from the linear case which is handed in Subsection~\ref{sub-linrank1}.
Again we use the notation from Section~\ref{sect:organisation}: since 
$G$ is not 3-transitive we assume that $3\leq k\leq v-3$.  There is an 
infinite family of examples connected to the classical unitals in 
$\PG(2,q^2)$, (for information on these unitals see \cite{be,t}).

Let $q$ be a prime power and $V=\F_{q^2}^3$. The involutory automorphism  
$x\rightarrow x^q$ of $\F_{q^2}$ allows us to define a Hermitian form 
$\varphi:V\times V\rightarrow\F_{q^2}$ as follows: for $\x=(x_1,x_2,x_3)$ 
and $\y=(y_1,y_2,y_3)\in V$, 
$\varphi(\x, \y)=x_1\bar{y_3} + x_3\bar{y_1} + x_2\bar{y_2}$, where we write 
$\bar{a}:=a^q$ for $a\in\F_{q^2}$. 

\begin{example}\label{ex-u}{\rm
The subgroup $G:=\PGaU(3,q)$ of $\PGaL(3,q^2)$ 
preserving $\varphi$ acts faithfully and 2-transitively on the set
$\V$ of $v=q^3+1$ isotropic $1$-spaces $\la \x\ra$ of $V$ (that is, 
$\varphi(\x,\x)=0$). Each non-degenerate 2-space $L$ of $V$, relative to $\varphi$, 
contains exactly $q+1$ elements of $\V$, and we denote this $(q+1)$-subset of 
$\V$ by  $L\cap\V$. The code $\Ga\subset\binom{\V}{q+1}$ consisting of 
these $(q+1)$-subsets,  one for each non-degenerate 2-space $L$, is the 
classical unital. It is $G$-strongly incidence-transitive 
with minimum distance $\delta(\Ga)=q$.  
}
\end{example}

\begin{lemma}\label{lem-u}
The claims made about $\Ga$ in Example~{\rm\ref{ex-u}} are valid. 
\end{lemma}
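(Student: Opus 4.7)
The plan is to verify the four assertions of Example~\ref{ex-u}: that $G=\PGaU(3,q)$ acts faithfully and $2$-transitively on $\V$ with $|\V|=q^3+1$; that $|L\cap\V|=q+1$ for every non-degenerate 2-subspace $L$, so $\Ga\subseteq\binom{\V}{q+1}$ is well defined; that $\Ga$ is $G$-strongly incidence-transitive; and that $\de(\Ga)=q$.

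The first two assertions are classical features of the Hermitian unital and I would take them from \cite{be,t}: the restriction $\vp|_L$ is a non-degenerate Hermitian form on $\F_{q^2}^2$ whose projective isotropic locus is the Hermitian curve on the projective line, of size $q+1$; and by Witt's extension theorem $G$ is transitive on non-degenerate 2-subspaces of $V$, hence on $\Ga$.

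For strong incidence-transitivity, I would fix $\ga=L\cap\V\in\Ga$ and $u\in\ga$. The orthogonal decomposition $V=L\perp L^\perp$ is $G_L$-invariant, and $G_L$ induces on $L$ a subgroup containing $\PGU(2,q)\cong\PGL(2,q)$, which is $3$-transitive on the $q+1$ isotropic points of $L$; in particular $G_\ga$ is transitive on $\ga$, so it suffices to prove that $G_{\ga,u}$ is transitive on $\ov\ga$. Inside $\PGU(3,q)\leq G$ the stabiliser $G_u$ is a maximal parabolic $U\rtimes T$ with unipotent radical $U$ of order $q^3$ acting regularly on $\V\setminus\{u\}$, and a Levi torus $T$ of order $q^2-1$. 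Taking $u=\la(1,0,0)\ra$, $L=\la(1,0,0),(0,0,1)\ra$ and $L^\perp=\la(0,1,0)\ra$ as in Example~\ref{ex-u}, a direct matrix computation shows that $U_L$ has order $q$ and acts regularly on $\ga\setminus\{u\}$, and that the torus of diagonal matrices $\mathrm{diag}(\lam,\mu,\lam^{-q})$ with $\mu^{q+1}=1$ lies in $G_{\ga,u}$; this gives $|U_L\, T|=q(q^2-1)=|\ov\ga|$. To finish I would verify that $U_L$ is transitive on the $q$ points of $\ov\ga$ on each secant through $u$ (via $x_1\mapsto x_1+bx_3$ with $\mathrm{Tr}(b)=0$), and that $T$ permutes the $q^2-1$ secants through $u$ distinct from $L$ transitively via the action $x_3\mapsto\lam^{-q}\mu^{-1}x_3$ on the third coordinate.

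For the minimum distance, any two distinct codewords $\ga,\ga'$ arise from distinct non-degenerate 2-spaces $L,L'$, which meet in a 1-space of $V$ that is isotropic or not; hence $|\ga\cap\ga'|\in\{0,1\}$ and $d(\ga,\ga')\in\{q,q+1\}$. The lower value is attained by any two of the $q^2\geq 2$ secants through a common isotropic point. The main obstacle is the transitivity of $G_{\ga,u}$ on $\ov\ga$: the other assertions follow cleanly from standard unitary geometry, but this step requires the concrete calculation of how $U_L$ and the Levi torus interact with the secants through $u$.
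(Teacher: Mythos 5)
Your proof is correct, and its overall architecture coincides with the paper's: both fix the explicit codeword $\ga=L\cap\V$ with $L=\e_2^\perp$, invoke Witt's theorem for transitivity on $\Ga$ and on incident point--block pairs, carry out an explicit matrix computation inside the parabolic $G_u=U\rtimes T$ to get transitivity of $G_{\ga,u}$ on $\ov\ga$, and deduce $\de(\Ga)=q$ from the fact that distinct blocks of the unital meet in at most one point. The one step where you take a genuinely different route is the crucial transitivity on $\ov\ga$: the paper picks a single point $\u=\la(x,1,1)\ra\in\ov\ga$, computes that its stabiliser in $G_{\ga,\v}$ has order exactly $q+1$, and concludes by orbit counting that $|\u^{G_{\ga,\v}}|=q(q^2-1)=|\ov\ga|$; you instead exploit the $G_{\ga,u}$-invariant decomposition of $\ov\ga$ into the $q^2-1$ secants through $u$ other than $L$, letting the transvection group $U_L$ (which fixes every line through $u$ and is semiregular, hence regular on the $q$ unital points of each such secant) act within blocks and the torus permute the blocks transitively. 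Both are sound; the paper's count is shorter once the stabiliser condition "$\nu$ and $\alpha$ determined by $x$ and $\mu$" is extracted, while your imprimitivity argument is more structural and makes visible why the orbit has the right size (and, as a bonus, your remark that distance $q$ is actually attained by two secants through a common point makes explicit a small step the paper leaves implicit). The remaining verifications you defer ($U_L=U\cap G_L$ has order $q$ and consists of the unitary transvections at $u$; the torus acts on the third coordinate as claimed) do check out against the matrices $t_{\alpha,0}$ and $h_{\nu,\mu}$ used in the paper.
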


\begin{proof}
Let $\ga=L\cap\V$ 
for some non-degenerate 2-space $L$.
We prove that $G_\ga$  is transitive on $\ga\times\bar\ga$, 
where $G=\GU(3,q)$ (acting with kernel a subgroup of scalars of order 
$(3,q+1)$).
Denote by $\e_i$ the standard basis vector with 1 in the $i$-entry and 
other entries 0. Then $\e_1, \e_3$ are isotropic while $\varphi(\e_2,\e_2)=1$. 
We take $\ga=L\cap\V$, for $L$ the $\varphi$-orthogonal complement of the 
non-isotropic vector $\e_2$. It is straightforward to compute that
\[
\ga =\{\la \e_1\ra, \la \e_3\ra\} \cup\{\la (x,0,1)\ra\,|\, x^{q-1}=1\}
\]
of size $k=|\ga|=q+1$. The fact that $\GU(3,q)$ is transitive on 
incident point-block pairs of the unital follows from Witt's theorem, 
and hence $G_\ga$ is transitive on $\ga$. By \cite[pp.248--250]{DM}, 
$G$ contains (modulo scalars) the following elements
\begin{equation}\label{eq-t}
t_{\alpha,\beta} = \left(
% use packages: array
\begin{array}{lll}
1 & -\bar\beta & \alpha \\ 
1 & 1 & \beta \\ 
0 & 0 & 1
                         \end{array} \right), \quad        
h_{\nu,\mu}= \left(
\begin{array}{lll}
\nu & 0 & 0 \\ 
0 & \mu & 0 \\ 
0 & 0 & \bar\nu^{-1}
                         \end{array}  \right)  
\end{equation}
for $\alpha,\beta,\nu,\mu\in\F_{q^2}$ such that $\alpha+
\bar\alpha+\beta\bar\beta=0$, $\nu\ne0, \mu\bar\mu=1$, 
and the stabiliser $G_\v$  in $G$ of $\v=\la \e_3\ra\in\ga$ 
consists of the $q^3(q^2-1)(q+1)$ products $h_{\nu,\mu}
t_{\alpha,\beta}$. A straightforward computation shows 
that $G_{\ga,\v}$ has order $q(q^2-1)(q+1)$, comprising 
those products with $\beta=0$. For $x\in\F_{q^2}$ such 
that $x+\bar x+1 = 0$ and $x\ne1$, the vector $(x,1,1)$ is 
isotropic, so $\u:=\la (x,1,1)\ra\in\V\setminus\ga =
\bar\ga$. The element  $h_{\nu,\mu}t_{\alpha,0}\in 
G_{\ga,\v}$ maps $(x,1,1)$ to $(x\nu+\mu,\mu,x\nu\alpha +\bar\nu^{-1})$, 
and hence fixes $\u$ if and only if $x\nu+\mu=x\mu$ and
$\mu=x\nu\alpha +\bar\nu^{-1}$; or equivalently,
$\nu=\mu(x-1)/x$ and $\alpha= (\mu-\bar\nu^{-1})/x\nu$
are determined by $x$ and $\mu$.
% and $\nu=\mu$ (and hence $\nu=\bar\nu^{-1}$). 
Thus the $G_{\ga,\v}$-orbit 
containing $\u$ has length $q(q^2-1)=|\bar\ga|$, whence $G_\ga$ 
is transitive on $\ga\times\bar\ga$ as claimed. Finally every two 
points of $\V$ lie in a unique codeword in $\Ga$, and since $G$ is 
2-transitive on $\V$ the largest intersection of distinct codewords 
is 1, so the minimum distance of $\Ga$ is $q$. 
\end{proof}

We now prove Theorem~\ref{thm-r1}, which deals with the 2-transitive groups 
$G$ of rank 1, that is, groups with socle $T(q)$ of degree $v=|\V|$
as in one of the lines of Table~\ref{tbl-rank1}.

\begin{center}
\begin{table}
\begin{tabular}{llccc}\hline
$T(q)$	&$q=p^a$		&$v$  &$|T(q)|$            &$|\Out(T)|$\\ \hline
$\Sz(q)$& $2^{2c+1}>2$	&$q^2+1$&$q^2(q^2+1)(q-1)$ &$a$\\
$\Ree(q)$& $3^{2c+1}>3$&$q^3+1$&$q^3(q^3+1)(q-1)$ &$a$\\
$\Ree(3)'$& $3$&$q^3+1$&$q^2(q^3+1)(q-1)$ &$3$\\
$\PSU(3,q)$&$q>2$        &$q^3+1$&$\frac{1}{d}q^3(q^3+1)(q^2-1)$&$2ad$\\ 
          &              &       &where $d=(3,q+1)$             &    \\ \hline
\end{tabular}
\caption{Groups for Theorem~\ref{thm-r1}} \label{tbl-rank1}                                                 
\end{table}   
\end{center}

%$(q,k)=(3,12)$ has $\de=6$

\smallskip\noindent
\emph{Proof of Theorem~\ref{thm-r1}.}\quad
Let $T\trianglelefteq G\leq\Aut(T)$ with $q=p^a$,  $T=T(q)$ and $v=|\V|$ as in one of the 
lines of Table~{\rm\ref{tbl-rank1}}.
We use the classification of the subgroups of $G$ in \cite{u,ree,sz}
for the Suzuki, Ree and unitary groups, respectively. 
Suppose that $\Ga\subset\binom{\V}{k}$ is $G$-strongly 
incidence transitive, and let $\ga\in\Ga$. Since 
$\Ga\ne\binom{\V}{k}$ and $G$ is 2-transitive on $\V$, we have
$3\leq k\leq v-3$. 
Then $G_\ga$ has two orbits in $\V$, each of size at least 3, 
and it follows that 
$G_\ga$ is not contained in a parabolic subgroup. 
When $G_\ga$ is contained in other maximal subgroups we use the fact that 
$k(v-k)$ divides $|G_\ga|$ and in particular that  $k(v-k)\leq |G_\ga|$.

If $T=\Sz(q)$, then by \cite{sz}, the non-parabolic maximal subgroups of 
$T$ have orders $2(q-1)$, or $4(q\pm r+1)$, or $|\Sz(q_0)|$, where $2q=
r^2$ and $q=q_0^b$ for an odd prime $b$. In each case $|G_\ga|\leq a
|T_\ga|<3(q^2-2)\leq k(v-k)$.
% so $G_\ga$ cannot be transitive on $\ga\times\bar\ga$.

Suppose next that $T=\Ree(q)$, with $q>3$, or $\Ree(3)'\cong\PSL(2,8)$. 
Then  by \cite{ree}, the non-parabolic maximal subgroups of $T$ have 
orders $6(q+1)$, or $2|\PSL(2,q)|$ (with $q>3$), or $6(q\pm r+1)$ 
(with  $3q=r^2$), or $|\Ree(q_0)|$ (with $q=q_0^b$ for an odd prime 
$b$). In each case $|G_\ga|\leq |\Out(T)|.|T_\ga|<3(q^3-2)\leq k(v-k)$.
% so $G_\ga$ cannot be transitive on $\ga\times\bar\ga$.  

Thus $T=\PSU(3,q)$ with $q>2$. We may assume that neither $\ga$ nor $\ov\ga$ is 
as in Example~\ref{ex-u}. 
Then $G_\ga$ acts irreducibly on the underlying space $V=V(3,q^2)$, 
so $G_\ga$ is contained in an irreducible maximal subgroup $H$ of $G$, 
and $H\cap T$ is contained in a maximal subgroup $M$ of $T$. 
The list of maximal subgroups of $T$
can be found in \cite[pp. xxx]{u}, and we consider them in turn.
First, however, we deal with the small cases where $q\in\{3,4,5\}$. For these groups,
 lists of maximal subgroups of $T$ are available in \cite{At}, and for some properties we rely on
computations in {\sf GAP}~\cite{GAP} kindly done for us by Max Neunh\"offer.

\smallskip\noindent
{\it Case: $q=3$.}\quad For $\v\in\V$ lying in a $G_\ga$-orbit
of length $\min\{k,v-k\}\leq v/2$, the subgroup $G_{\ga,\v}$ 
has an orbit of length $\max\{k,v-k\}\geq v/2=14$.
It follows that $T_\ga$ is not contained in the transitive 
maximal subgroup $\PSL(2,7)$, and hence $T_\ga=4^2:S_3$. 
A {\sf GAP} computation confirms that this subgroup gives rise to an 
example with $\ga$ or $\ov\ga$ of size $12$ and $\delta(\Ga)=6$, and for  
the transitivity condition we need $G=T.2$.
In this example, the codewords of size $12$ are the `bases' \cite[page 14]{At}.

\smallskip\noindent
{\it Case: $q=4$.}\quad 
Since $G_\ga$ is irreducible and $|G_\ga|$ is divisible by $k(65-k)$, 
it follows that $k=5$, $G= T.2$ or $T.4$, and $G_\ga\cap T.2=5^2:D_{12}$. However a 
 {\sf GAP} computation reveals that the subgroups $5^2:D_{12}$ and $5^2:(4\times S_3)$
both have orbit lengths 15 and 50 in $\V$,
and hence we get no example since $15\cdot 50$ does not divide $|G_\ga|$. 

\smallskip\noindent
{\it Case: $q=5$.}\quad  
Since $|G_\ga|$ is divisible by $k(126-k)$ 
it follows that $k=6$ and $G_\ga\cap T=M_{10}$, which has two orbits in $\V$.
However a {\sf GAP} computation shows that these orbit lengths are 36 and 90,
and  $36\cdot 90$ does not divide $|G_\ga|$. 

%\item[(iv)] Suppose that $q=7$. Since $|G_\ga|$ is divisible by $k(344-k)$ it follows that $k=8$ and $G_\ga\cap T=2(L_2(7)\times 4).2$ is the stabiliser of a non-isotropic point,  and $G=T$ or $T.2$. .[Possible example?]..,    

\smallskip\noindent

From now on we assume that $q\geq 7$. 

\smallskip\noindent
\emph{Case: $M$ preserves a direct decomposition of $V$.}\quad Then 
 $M$ is of type $(q+1)^2:S_3$,  
$M=H\cap T$, and $M$ has order $6(q+1)^2/(3,q+1)$. 
Let $ \v\in\V$ and note that $|T_{\v}|=q^3(q^2-1)/(3,q+1)$. Let 
\[
b:=(|M|,|T_{\v}|) =  \frac{q+1}{(3,q+1)} \,(6(q+1), q^3(q-1))
\]
and note that $|M_\v|$ divides $b$, so that the orbit length $|\v^M|$
is divisible by $|M|/b$.
We claim that $|\v^{M}|\geq \frac{q+1}{(2,q-1)}$ (so, since 
this holds for all $\v$ it implies that $k\geq  \frac{q+1}{(2,q-1)}$).
If $p=2$ then $b\leq 6(q+1)/(3,q+1)$ and so $|\v^{M}|\geq 
q+1$. Assume now that $p$ is odd. 
If $q\equiv 0$ or $1\pmod{3}$, then $b= 3(q+1)(2(q+1),\frac{q^3(q-1)}{3})\leq 12(q+1)$,
so $|\v^{M}|\geq \frac{q+1}{2}$. Finally if $q\equiv 2\pmod{3}$, 
then $b\leq \frac{q+1}{3}(2(q+1),q-1)\leq 4(q+1)/3$,
so $|\v^{M}|\geq \frac{3(q+1)}{2}$, and the claim is proved. 
Thus $k\geq   \frac{q+1}{(2,q-1)}$ and hence
\[
 \frac{q+1}{(2,q-1)}(q^3+1- \frac{q+1}{(2,q-1)})\leq k(q^3+1-k)\leq  |G_\ga|\leq (q+1)^2.6.2a 
\]
which implies that $q(q-1)$ is at most $12a$ if $q$ is even, or $24a$ if $q$ is odd. 
Since $q\geq7$, this means that $q=8$.
However if $q=8$ then $b=6$ and hence $k$ is divisible by 
$|M|/6=27$,  but then $k(q^3+1-k)$ does not divide $|G_\ga|$.   

\smallskip\noindent
\emph{Case: $M$ preserves an extension field structure on $V$.}\quad 
Here $M$ is of type $(q^2-q+1):3$.  However the cyclic group of 
order $q^2-q+1$ is semiregular on $\V$, and so, 
for  $\v\in\V$, $|G_{\ga,\v}|$ divides $6a$, which is 
less than $(q^3+1)/2$, so $G_\ga$ is not transitive on $\ga\times\bar\ga$.

\smallskip\noindent
\emph{Case: $M$ is a subfield subgroup.}\quad  
Suppose first that $q$ is odd and $M$ is of type ${\rm SO}(3,q)$. 
Then $|H|\leq q(q^2-1)2a$ (see \cite[Proposition 4.5.5]{KL}). 
Also $H$ is intransitive on $\V$ and hence $G_\ga = H$. 
Modulo scalars we can take $H\cap T$ to be the subgroup of matrices with 
entries in $\F_q$, so in particular $H$ contains the subgroup $H_0$ consisting of the 
$q$ matrices $t_{\alpha,\beta}$ in (\ref{eq-t}) with $\alpha,\beta\in\F_q$ and 
$2\alpha + \beta^2=0$. Consider the points $\v=\la \e_1\ra$ and $\u=\la (x,1,1)\ra$ 
defined in the proof of Lemma~\ref{lem-u}, where here we choose $x\in\F_{q^2}\setminus\F_q$ 
as well as satisfying $x+\bar x+1=0$. With this choice of $x$, the points $\v$ and $\u$ 
lie in different $H$-orbits (since $x\not\in\F_q$), and a straightforward 
calculation shows that each of the $H_0$-orbits 
containing $\v$ and $\u$ has length $q$. Thus $k\geq q$ and hence
\[
 q^2(q^2-1)<q(q^3+1-q)\leq k(q^3+1-k)\leq|G_\ga|\leq q(q^2-1).2a.
\]
This implies that $q<2a$, which is a contradiction.

Now suppose that $M$ is of type $\SU(3,q_0)$ with $q=q_0^r$ and $r$ 
an odd prime. Then arguing as above we have $G_\ga=H$ and, modulo scalars, we may take 
$H\cap T$ to be the subgroup of matrices with entries in $\F_{q_0^2}$, so in 
particular $H$ contains the subgroup $H_0$ consisting of the $q_0^3$ matrices 
$t_{\alpha,\beta}$ in (\ref{eq-t}) with $\alpha,\beta\in\F_{q_0^2}$ and $\alpha + 
\bar\alpha + \bar\beta \beta=0$. The points $\v$ and $\u$ lie in different $H$-orbits, 
where this time we take the scalar $x\in\F_{q^2}\setminus\F_{q_0^2}$, and the $H_0$-orbits 
containing these two points both have length $q_0^3$. Thus $k\geq q_0^3$ and so in this case, 
since $q\geq q_0^3$,
\[
\frac{q_0^{3r+3}}{2}< q_0^3(q^3-q_0^3)<|G_\ga|\leq q_0^3(q_0^3+1)(q_0^2-1).2a < q_0^8.2a
\]
and hence $4a> q_0^{3r-5}>q_0^{r}=q$ and we have a contradiction. 

\smallskip%\noindent
For each of the remaining groups $M$, we have $q=p\geq 7$.

\smallskip\noindent
\emph{Case: $M$ is of symplectic type.}\quad The group $M$ corresponds to a subgroup  
$3^{1+2}:Q_8.\frac{(q+1,9)}{3}$ of $\SU(3,q)$ and here $q\geq11$. 
The order $|G_\ga|$ is at most $1296$ which is less than $k(q^3+1-k)$.

\smallskip\noindent
\emph{Case: $M=\PSL(2,7)$ with $q\equiv 3,5,6\pmod{7}$.}\quad We have 
$3(q^3-2)\leq 168\cdot 2$ which is a contradiction for $q\geq7$.

\smallskip\noindent
\emph{Case: $M=A_6$ with $q\equiv 11, 14\pmod{15}$.}\quad We have 
$3(q^3-2)\leq 360\cdot 2$ which is a contradiction for $q\geq7$.

\smallskip
This completes the proof of Theorem~\ref{thm-r1}.

\end{document}